\newcommand\cprime{$'$}
\newcommand\polymake{{\tt polymake}\xspace}
\newcommand\NN{{\mathbb N}}
\newcommand\LL{{\mathbb L}}
\newcommand\KK{{\mathbb K}}
\newcommand\KKt{{\mathbb K}\{\!\!\{t\}\!\!\}}
\newcommand\RR{{\mathbb R}}
\newcommand\cL{\mathcal{L}}
\newcommand\SetOf[2]{\left\{\left.#1\vphantom{#2}\ \right|\ #2\vphantom{#1}\right\}}
\newcommand\transpose[1]{{#1}^{\top}}
\DeclareMathOperator{\val}{val}
\DeclareMathOperator{\Gr}{Gr} 
\DeclareMathOperator{\TGr}{TGr} 
\DeclareMathOperator{\characteristic}{char} 
\DeclareMathOperator{\Dr}{Dr} 
\DeclareMathOperator{\rank}{rk}
\DeclareMathOperator{\size}{\#}
\newcommand{\sfLift}{\Lambda}
\newcommand{\snow}{\mathcal{S}}
\renewcommand{\epsilon}{\varepsilon}
\theoremstyle{plain}
    \newtheorem{theorem}{Theorem}
    \newtheorem{corollary}[theorem]{Corollary}
    \newtheorem{lemma}[theorem]{Lemma}
    \newtheorem{proposition}[theorem]{Proposition}
\theoremstyle{definition}
    \newtheorem{remark}[theorem]{Remark}
    \newtheorem{example}[theorem]{Example}
    \newtheorem{definition}[theorem]{Definition}
    \newtheorem{question}{Question}
\title{Matroids from hypersimplex splits}
\author{Michael Joswig \and Benjamin Schr\"oter}
\address[Michael Joswig, Benjamin Schr\"oter]{
  Institut f{\"u}r Mathematik,
  TU Berlin,
  Str.\ des 17. Juni 136, 10623 Berlin, Germany
}
\email{\{joswig,schroeter\}@math.tu-berlin.de}
\thanks{
Research by M. Joswig is carried out in the framework of Matheon supported by Einstein Foundation Berlin. 
Further support by Deutsche Forschungsgemeinschaft (SFB-TRR 109: ``Discretization in Geometry and Dynamics'' and SFB-TRR 195: ``Symbolic Tools in Mathematics and their Application'')
}
\subjclass[2010]{52B40 (05B35, 14T05)}
\keywords{matroid polytope; tropical Grassmannian; Dressian}
\begin{document}

\begin{abstract}
  A class of matroids is introduced which is very large as it strictly contains all paving matroids as special cases.
  As their key feature these \emph{split matroids} can be studied via techniques from polyhedral geometry.
  It turns out that the structural properties of the split matroids can be exploited to obtain new results in tropical geometry, especially on the rays of the tropical Grassmannians.
\end{abstract}
\maketitle

\section{Introduction}
\noindent
The purpose of this paper is to introduce, to characterize and to exploit a new class of matroids, which we call \emph{split matroids}.
We will argue that there are good reasons to study these matroids for the sake of matroid theory itself.
Additionally, however, they also give rise to a large and interesting class of tropical linear spaces.
In this way we can use split matroids to answer some questions which previously arose in the investigation of tropical Grassmannians \cite{SpeyerSturmfels:2004} and Dressians \cite{HerrmannJensenJoswigSturmfels:2009,HerrmannJoswigSpeyer:2012}.

The split matroids are motivated via polyhedral geometry in the following way.
For a given matroid $M$ the convex hull of the characteristic vectors of the bases of $M$ is the \emph{matroid polytope} $P(M)$.
The hypersimplices $\Delta(d,n)$ are the matroid polytopes corresponding to the uniform matroids $U_{d,n}$.
If $M$ has rank $d$ and $n$ elements, the matroid polytope $P(M)$ is a subpolytope of $\Delta(d,n)$.
Studying matroids in polyhedral terms goes back to Edmonds~\cite{Edmonds:1970}.

A \emph{split} of a polytope is a subdivision with precisely two maximal cells.
These subdivisions are necessarily regular, and the cells are matroid polytopes.
The hyperplane spanned by the intersection of the two maximal cells is the corresponding \emph{split hyperplane}.
Clearly this hyperplane determines the split, and it yields a facet of both maximal cells.
As our first contribution we show the following converse.
Each facet of a matroid polytope $P(M)$ corresponds to either a hypersimplex facet or a hypersimplex split (Proposition~\ref{prop:mp-facets}).
We call the latter the \emph{split facets} of $P(M)$.
The hypersimplex facets correspond to matroid deletions and contractions, and the hypersimplex splits have been classified in \cite{HerrmannJoswig:2008}.
Now the matroid $M$ is a \emph{split matroid} if the split facets of $P(M)$ satisfy a compatibility condition.
We believe that these matroids are interesting since they form a large class but feature stronger combinatorial properties than general matroids.
``Large'' means that they comprise the paving matroids and their duals as special cases (Theorem~\ref{thm:paving_matroids}).
It is conjectured that asymptotically almost all matroids are paving matroids \cite{MayhewNewmanWelshWhittle:2011} and \cite[15.5.8]{Oxley:2011}.
In particular, this would imply that almost all matroids are split.
Section~\ref{app:statistics} in the appendix provides statistical data based on a census of small matroids which has been obtained by Matsumoto, Moriyama, Imai and Bremner \cite{Matsumoto:2012}.

We characterize the split matroids in terms of deletions and contractions, i.e., in pure matroid language (Theorem~\ref{thm:split-matroid} and Proposition~\ref{prop:disconnected}).
This way it becomes apparent that the basic concepts of matroid splits and split matroids make several appearances in the matroid literature.
For instance, a known characterization of paving matroids implicitly makes use of this technique; see \cite[Prop. 2.1.24]{Oxley:2011}.
Splits also occur in a recent matroid realizability result by Chatelain and Ram\'{\i}rez Alfons\'{\i}n \cite{ChatelainRamirez:2014}.
Yet, to the best of our knowledge, so far split matroids have not been recognized as an interesting class of matroids in their own right.

One motivation to study matroid polytopes comes from tropical geometry; see Maclagan and Sturmfels \cite{MaclaganSturmfels:2015}.
Tropical geometry is related to the study of an algebraic variety defined over some field with a discrete valuation, and a \emph{tropical variety} is the image of such a variety under the valuation map.
In particular, a \emph{tropical linear space} corresponds to a polytopal subdivision of the hypersimplices where each cell is a matroid polytope; see De~Loera, Rambau and Santos \cite{LoeraRambauSantos:2010} for general background on subdivisions of polytopes.
The \emph{Dressian} $\Dr(d,n)$ is the polyhedral fan of lifting functions for the (regular) matroid subdivisions of $\Delta(d,n)$.
By definition this is a subfan of the secondary fan.
In general, $\Dr(d,n)$ has maximal cones of various dimensions, i.e., it is not pure.
In work of Dress and Wenzel \cite{DressWenzel:1992} these lifting functions occur as ``valuated matroids''.
Using split matroids we provide exact asymptotic bounds for $\dim \Dr(d,n)$ (Theorem~\ref{thm:dim}).

A tropical linear space is \emph{realizable} if it arises as the tropicalization of a classical linear space.
It is known from work of Speyer \cite{Speyer:2005,Speyer:2009} that the realizability of tropical linear spaces is related with the realizability of matroids.
Here we give a first characterization of matroid realizability in terms of certain tropical linear spaces (Theorem \ref{thm:realizable}).
The subset of $\Dr(d,n)$ which corresponds to the realizable tropical linear spaces is the \emph{tropical Grassmannian}.
The latter is also equipped with a fan structure, which is inherited from the Gr\"obner fan of the $(d,n)$-Pl\"ucker ideal.
Yet it is still quite unclear how these two fan structures are related.
Here we obtain a new structural result by showing that, via split matroids, one can construct very many non-realizable tropical linear spaces which correspond to rays of the Dressian (Theorem~\ref{thm:rays}).
It was previously unknown if \emph{any} such ray exists.
The Dressian rays correspond to those tropical linear spaces which are most degenerate.
Once they are known it is ``only'' necessary to determine the common refinements among them to describe the entire Dressians.
In this way the rays yield a condensed form of encoding.
It is worth noting that the Dressians have far fewer rays than maximal cones.
For instance, $\Dr(3,8)$ has $4 748$ maximal cones but only twelve rays, up to symmetry \cite[Theorem~31]{HerrmannJoswigSpeyer:2012}.

\section{Matroid polytopes and their facets}
\noindent
Throughout this paper let $M$ be a matroid of rank $d$ with ground set $[n]=\{1,2,\dots,n\}$.
Frequently, we use the term \emph{$(d,n)$-matroid} in this situation.
We quickly browse through the basic definitions; further details about matroid theory can be found in the books of Oxley \cite{Oxley:2011} and White \cite{White:1986}.
We use the notation of Oxley \cite{Oxley:2011} for specific matroids and operations.
The matroid $M$ is defined by its \emph{bases}.
They are $d$-element subsets of $[n]$ which satisfy an abstract version of the basis exchange condition from linear algebra.
Subsets of bases are called \emph{independent}, and a dependent set which is minimal with respect to inclusion is a \emph{circuit}.
An element $e\in[n]$ is a \emph{loop} if it is not contained in any basis, and it is a \emph{coloop} if it is contained in all the bases.
Let $S$ be a subset of $[n]$.
Its \emph{rank}, denoted by $\rank(S)$, is the maximal size of an independent set contained in $S$.
The set $S$ is a \emph{flat} if for all $e\in[n]-S$ we have $\rank(S+e)=\rank(S)+1$.
The entire ground set and, in the case of loop-freeness, also the empty set are flats; the other flats are called \emph{proper flats}.
The set of flats of $M$, partially ordered by inclusion, forms a geometric lattice, the \emph{lattice of flats}.
The matroid $M$ is \emph{connected} if there is no \emph{separator set} $S\subsetneq[n]$ with $\rank(S)+\rank([n]-S)=d$.
A connected matroid with at least two elements does not have any loops or coloops.
A disconnected $(d,n)$-matroid decomposes in a \emph{direct sum} of an $(r,m)$-matroid $M'$ and a rank $d-r$ matroid $M''$ on $\{m+1,\ldots,n\}$, i.e, a basis is the union of a basis of $M$ and a basis of~$N$.
We write $M'\oplus M''$ for the direct sum.

For a flat $F$ of rank $r$ we define the \emph{restriction} $M|F$ of $F$ with respect to $M$ as the matroid on the
ground set $F$ whose bases are the sets in the collection
\[
\SetOf{\sigma\cap F}{\sigma \text{ basis of } M \text{ and } \#(\sigma\cap F)=r} \enspace .
\]
Dually, the \emph{contraction} $M/F$ of $F$ with respect to $M$ is the matroid on the ground set $[n]-F$ whose
bases are given by
\[
\SetOf{\sigma\setminus F}{\sigma \text{ basis of } M \text{ and } \#(\sigma\cap F)=r} \enspace .
\]
The restriction $M|F$ is a matroid of rank $r$, while the contraction $M/F$ is a matroid on the complement of
rank $d-r$.

Via its characteristic function on the elements, a basis of $M$ can be read as a $0/1$-vector of length $n$ with exactly $d$ ones.
The joint convex hull of all such points in $\RR^n$ is the \emph{matroid polytope} $P(M)$ of $M$.
A basic reference to polytope theory is Ziegler's book \cite{Ziegler:2000}.
It is immediate that the matroid polytope of any $(d,n)$-matroid is contained in the $(n{-}1)$-dimensional simplex
\[
\Delta \ = \ \SetOf{x\in\RR^n}{x_1\geq 0,\, x_2\geq 0,\,\ldots,\, x_n\geq 0, \ \sum^n_{i=1} x_i=d} \enspace .
\]
Combinatorial properties of $M$ directly translate into geometric properties of $P(M)$ and vice versa.
For instance, Edmonds~\cite[(8) and (9)]{Edmonds:1970} gave the exterior description
\begin{equation}\label{eq:matroid-exterior}
  P(M) \ = \ \SetOf{x\in\Delta}{\sum_{i\in F}x_i\leq\rank(F), \text{ where $F$ ranges over the set of flats}}
\end{equation}
of the matroid polytope $P(M)$ in terms of the flats.
The set
\[
P_M(F) \ := \ \SetOf{x\in P(M)}{\sum_{i\in F}x_i=\rank(F)}
\]
is the face of $P(M)$ defined by the flat $F$.  Clearly, some flats lead to redundant inequalities.
A \emph{flacet} of $M$ is a flat which defines a facet of $P(M)$ and which is minimal with respect to inclusion among all flats that define the same facet.
They have been characterized in purely combinatorial terms by Fujishige \cite[Theorems 3.2 and~3.4]{Fujishige:1984} and, independently, by Feichtner and Sturmfels \cite[Propositions 2.4 and~2.6]{FeichtnerSturmfels:2005} as follows.
\begin{proposition}\label{prop:matroid-polytope}\strut
  \begin{enumerate}
  \item The dimension of $P(M)$ equals $n$ minus the number of connected components of $M$.
  \item\label{it:matroid-polytope:flacet} A proper flat $F$ whose restriction $M|F$ and contraction $M/F$ both are connected is a flacet of $M$.
  \item\label{it:matroid-polytope:product} For each proper flat $F$ we have $P_M(F) = P(M|F)\times P(M/F) = P(M|F \oplus M/F)$.
  \end{enumerate}
\end{proposition}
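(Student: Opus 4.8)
The plan is to prove (iii) first, since (i) and (ii) then follow from it together with the exterior description \eqref{eq:matroid-exterior}. For (iii), observe that $P_M(F)$ is the face of $P(M)$ obtained by turning the valid inequality $\sum_{i\in F}x_i\le\rank(F)$ into an equality, so its vertices are exactly the characteristic vectors $\chi_\sigma$ of the bases $\sigma$ of $M$ with $\#(\sigma\cap F)=r:=\rank(F)$. I would check that $\sigma\mapsto(\sigma\cap F,\ \sigma\setminus F)$ is a bijection from these bases onto the set of pairs consisting of a basis of $M|F$ and a basis of $M/F$: one inclusion is immediate from the definitions of restriction and contraction, and for the converse one uses submodularity of the rank function to see that for a basis $B_1$ of $M|F$ and a basis $B_2$ of $M/F$ one has $\rank_M(B_1\cup B_2)=d$, so $B_1\cup B_2$ is a basis of $M$ meeting $F$ in exactly $r$ elements. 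Under the splitting $\RR^n=\RR^F\times\RR^{[n]-F}$ the vector $\chi_\sigma$ becomes $(\chi_{\sigma\cap F},\chi_{\sigma\setminus F})$, and passing to convex hulls gives $P_M(F)=P(M|F)\times P(M/F)$. The remaining identity $P(M|F)\times P(M/F)=P(M|F\oplus M/F)$ is precisely the statement that a basis of a direct sum of matroids on disjoint ground sets is the union of a basis of each summand, i.e.\ the same bijection once more.

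For (i), I would first reduce to the connected case. The union-of-bases bijection gives $P(M_1\oplus M_2)=P(M_1)\times P(M_2)$, so if $M$ has connected components $M_1,\dots,M_k$ on ground sets of sizes $n_1,\dots,n_k$, then $\dim P(M)=\sum_j\dim P(M_j)$, and it suffices to prove $\dim P(M)=n-1$ for connected $M$. As $P(M)\subseteq\Delta$ always yields $\dim P(M)\le n-1$, I only need that every linear functional $x\mapsto\sum_ia_ix_i$ which is constant on $P(M)$ has coefficient vector $a$ a scalar multiple of $\1$. The basis-exchange axiom forces $a_e=a_f$ whenever some basis $B$ satisfies $e\in B$, $f\notin B$ and $B-e+f$ is again a basis; and for connected $M$ any two distinct elements $e,f$ lie in a common circuit $C$, so extending $C\setminus\{f\}$ to a basis $B$ avoiding $f$ and exchanging $e$ for $f$ along the fundamental circuit of $f$ with respect to $B$ (which equals $C$) produces such a $B$. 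Hence $a$ is constant, $\dim P(M)=n-1$ for connected $M$, and $\dim P(M)=\sum_j(n_j-1)=n-k$ in general.

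For (ii), let $F$ be a proper flat with $M|F$ and $M/F$ connected. By (iii) and (i),
\[
\dim P_M(F) \ = \ \dim P(M|F)+\dim P(M/F) \ = \ (\#F-1)+(n-\#F-1) \ = \ n-2 \enspace ,
\]
which is $\dim P(M)-1$ when $M$ is connected, the relevant case; so $P_M(F)$ is a facet. It remains to see that $F$ is inclusion-minimal among flats defining this facet. If a flat $G\subsetneq F$ had $P_M(G)\supseteq P_M(F)$, then evaluating $\sum_{i\in G}x_i=\rank(G)$ on the vertices of $P_M(F)$ and translating through the bijection of (iii) shows that $\sum_{i\in G}x_i=\rank(G)$ holds on all of $P(M|F)$; but by (i) the affine hull of $P(M|F)$ is the hyperplane $\{\sum_{i\in F}x_i=\rank(M|F)\}$, and no hyperplane is properly contained in another, so $\chi_G$ must be proportional to $\chi_F$, forcing $G=\emptyset$ or $G=F$. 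The case $G=\emptyset$ is excluded because $\emptyset$ is not a proper flat and because $P_M(\emptyset)=P(M)\ne P_M(F)$ in any case. Hence no such $G$ exists and $F$ is a flacet.

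The only genuinely non-routine ingredient is the classical characterization used in (i) that a matroid is connected precisely when every two of its elements lie in a common circuit; I expect this, together with the bookkeeping needed to dispose of the degenerate flats in (ii), to be the main obstacle, while everything else is manipulation of \eqref{eq:matroid-exterior}, rank submodularity, and the product decomposition of matroid polytopes from (iii).
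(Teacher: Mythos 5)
Your argument is correct, but note that the paper itself does not prove Proposition~\ref{prop:matroid-polytope} at all: it is quoted from Fujishige \cite[Theorems 3.2 and 3.4]{Fujishige:1984} and Feichtner--Sturmfels \cite[Propositions 2.4 and 2.6]{FeichtnerSturmfels:2005}, so there is no in-paper proof to compare against. What you give is a clean self-contained derivation: (iii) via the bijection $\sigma\mapsto(\sigma\cap F,\sigma\setminus F)$ (your submodularity computation for surjectivity is fine, since a basis $B_2$ of $M/F$ satisfies $\rank_M(B_2\cup F)=d$ and a basis $B_1$ of $M|F$ spans $F$), (i) via the classical fact that in a connected matroid any two elements lie on a common circuit together with the exchange along the fundamental circuit, and (ii) by the dimension count plus the observation that a functional $\chi_G$ with $G\subseteq F$ that is constant on $P(M|F)$ must be a multiple of $\1_F$, forcing $G\in\{\emptyset,F\}$; this is exactly the minimality needed, since a flacet only has to be minimal among flats defining the \emph{same} facet. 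One point worth making explicit: as stated, (ii) is only valid for connected $M$ (for $M$ with two components and $F$ equal to one of them, $M|F$ and $M/F$ are connected but $P_M(F)=P(M)$ is not a facet); you flag this by restricting to "the relevant case", and the paper's Remark~\ref{rem:flacets} makes the same restriction, so your treatment is consistent with the intended reading. The only ingredient you import without proof is the circuit characterization of connectivity, which is standard (Oxley) and a legitimate citation.
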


\begin{remark}\label{rem:flacets}
   Proposition~\ref{prop:matroid-polytope}(\ref{it:matroid-polytope:flacet}) characterizes the flacets of a connected matroid.
   For a disconnected matroid the notion of a flacet is somewhat subtle.
   First, in the disconnected case there are proper hyperplanes which contain the entire matroid polytope.
   Such a hyperplane is not facet defining and the corresponding flat is not a flacet.
   Second, for any given facet the defining inequality is never unique.
   In our definition we choose a specific representative by picking the inclusion minimal flat.
   If a flat is a direct sum $F\oplus G$ then $P_M(F\oplus G)$ is the intersection of the two faces $P_M(F)$ and $P_M(G)$.
   In particular, the restriction to a flacet is always connected, while the contraction is not.
\end{remark}

The \emph{hypersimplex} $\Delta(d,n)$ is the matroid polytope of the uniform matroid $U_{d,n}$ of rank $d$ on $n$
elements.  Its vertices are all the $0/1$-vectors of length $n$ with exactly $d$ ones.  As $\Delta(d,n)$ is the
intersection of the unit cube $[0,1]^n$ with the hyperplane $\sum x_i=d$, the $2n$ facets of $[0,1]^n$ give rise to a
facet description for $\Delta(d,n)$.  In this case the flacets are the $n$ flats with one element.  The matroid polytope
of any $(d,n)$-matroid is a subpolytope of $\Delta(d,n)$.  The following converse, obtained by Gel{\cprime}fand,
Goresky, MacPherson and Serganova, is a fundamental characterization.  The vertex-edge graph of the $(d,n)$-hypersimplex
is called the \emph{Johnson graph} $J(d,n)$.  This is a $[d\cdot(n-d)]$-regular undirected graph with $\tbinom{n}{d}$
nodes; each of its edges corresponds to the exchange of two bits.

\begin{proposition}[{\cite[Theorem~4.1]{GelfandEtAl:1987}}]\label{prop:gelfand}
  A subpolytope $P$ of $\Delta(d,n)$ is a matroid polytope if and only if the vertex-edge graph of $P$ is a subgraph of the Johnson graph $J(d,n)$.
\end{proposition}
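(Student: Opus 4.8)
The plan is to prove the two implications separately; throughout write $e_B\in\{0,1\}^n$ for the characteristic vector of a $d$-element set $B\subseteq[n]$, so that $P=\conv\{e_B:B\in\mathcal{B}\}$ for the family $\mathcal{B}$ of those $d$-subsets whose characteristic vectors are the vertices of $P$, and recall that the edges of $J(d,n)$ join exactly the pairs $B,B'$ with $\#(B\triangle B')=2$. For the forward direction assume $P=P(M)$ and suppose, for contradiction, that $\conv\{e_B,e_{B'}\}$ is an edge of $P(M)$ with $\#(B\triangle B')\geq 4$. Picking $x\in B\setminus B'$, the symmetric basis exchange property of matroids supplies $y\in B'\setminus B$ such that $B-x+y$ and $B'-y+x$ are both bases. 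Since $y\notin B$ and $\#((B-x+y)\triangle B')=\#(B\triangle B')-2\geq 2$, the basis $C:=B-x+y$ is distinct from both $B$ and $B'$, while $e_C+e_{B'-y+x}=e_B+e_{B'}$. Thus $\tfrac12(e_B+e_{B'})$ lies in the relative interior of the segment $\conv\{e_C,e_{B'-y+x}\}\subseteq P(M)$; since a face of a polytope that contains a relative interior point of a segment must contain the whole segment, the edge $\conv\{e_B,e_{B'}\}$ would contain the vertex $e_C\notin\{e_B,e_{B'}\}$, which is absurd. Hence every edge of $P(M)$ joins bases at Johnson distance two, so the vertex--edge graph of $P(M)$ is a subgraph of $J(d,n)$.

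For the converse, assume every edge of $P=\conv\{e_B:B\in\mathcal{B}\}$ is parallel to a difference $e_i-e_j$ of standard unit vectors; we verify the basis exchange axiom for $\mathcal{B}$, namely that for $B,B'\in\mathcal{B}$ and $i\in B\setminus B'$ there is $j\in B'\setminus B$ with $B-i+j\in\mathcal{B}$. We induct on $\#(B\triangle B')$, the case $\#(B\triangle B')=2$ being immediate since then $B'=B-i+j$. Let $\#(B\triangle B')=2k$ with $k\geq 2$. Then $\conv\{e_B,e_{B'}\}$ is parallel to no $e_i-e_j$, hence is not an edge of $P$, so the midpoint $m:=\tfrac12(e_B+e_{B'})$ lies in the relative interior of a face $G$ of $P$ with $\dim G\geq 2$. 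The linear functional $x\mapsto\sum_{i\in B\cap B'}x_i-\sum_{i\notin B\cup B'}x_i$ attains its maximum over $\Delta(d,n)$ precisely on the face $\{x:x_i=1\text{ for }i\in B\cap B',\ x_i=0\text{ for }i\notin B\cup B'\}$, on which $e_B$, $e_{B'}$ and therefore $m$ all lie; consequently $G$ lies there too, and every vertex $e_C$ of $G$ satisfies $B\cap B'\subseteq C\subseteq B\cup B'$, so $G$ embeds into the hypersimplex on the coordinate set $B\triangle B'$. Since $\dim G\geq 2$, there is a vertex $e_C$ of $G$ with $C\notin\{B,B'\}$; setting $a:=\#(C\cap(B\setminus B'))$ one has $a\in\{1,\dots,k-1\}$, a short count gives $\#(B\triangle C)=2(k-a)$ and $\#(C\triangle B')=2a$ (both in $\{2,\dots,2k-2\}$), and both $C\setminus B$ and $B'\setminus C$ are contained in $B'\setminus B$.

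Now distinguish two cases. If $i\notin C$, then $i\in B\setminus C$ with $\#(B\triangle C)<2k$, so the inductive hypothesis applied to $(B,C)$ yields $j\in C\setminus B\subseteq B'\setminus B$ with $B-i+j\in\mathcal{B}$, as desired. If $i\in C$, then $i\in C\setminus B'$ with $\#(C\triangle B')<2k$, so the inductive hypothesis applied to $(C,B')$ yields $j\in B'\setminus C\subseteq B'\setminus B$ with $D:=C-i+j\in\mathcal{B}$; this $D$ again satisfies $B\cap B'\subseteq D\subseteq B\cup B'$ and $i\notin D$, so $D\neq B$, and if also $D\neq B'$ then $\#(B\triangle D)<2k$ and the inductive hypothesis applied to $(B,D)$ finishes as before. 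The leftover possibility $D=B'$ occurs only when $C$ meets $B\setminus B'$ in the single element $i$, i.e.\ $a=1$, and I expect excluding it to be the only delicate point: if every vertex $e_C$ of $G$ with $C\notin\{B,B'\}$ met $B\setminus B'$ exactly in $\{i\}$, then all such $C$ would lie at distance $2(k-1)$ from $B$, so for $k\geq 3$ the vertex $e_B$ would have no neighbour in $G$, contradicting $\dim G\geq 2$, while for $k=2$ the only candidate vertices are $e_B$, $e_{B'}$ and $e_{(B\cap B')\cup\{i,j\}}$ for $j\in B'\setminus B$, and a coordinatewise comparison shows that no convex combination of these with $m$ in its relative interior can involve any vertex other than $e_B$ and $e_{B'}$ — contradicting $\dim G\geq 2$ once more. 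Hence a vertex $C$ falling into one of the two good cases always exists, and the induction goes through. The coordinate bookkeeping in this final exclusion is where the argument is fiddliest; everything else is forced by the edge hypothesis together with the passage to the hypersimplex on $B\triangle B'$.
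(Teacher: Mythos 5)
Your argument is correct. Note that the paper itself gives no proof of this statement: it is quoted verbatim from Gel\cprime fand--Goresky--MacPherson--Serganova \cite[Theorem~4.1]{GelfandEtAl:1987}, so there is no in-paper argument to compare against; what you have written is a self-contained proof in the spirit of the original one. The forward direction is clean, though you should say explicitly that you are invoking the \emph{symmetric} basis exchange theorem (Brualdi), which is strictly stronger than the defining exchange axiom; with it, the identity $e_{B-x+y}+e_{B'-y+x}=e_B+e_{B'}$ and the face argument do exactly what you claim. In the converse, the key steps all check out: the midpoint $m$ lies in the relative interior of a face $G$ with $\dim G\geq 2$ (if $\dim G\leq 1$ the segment $[e_B,e_{B'}]$ would itself be an edge, contradicting the parallelism hypothesis), the linear functional forces every vertex $e_C$ of $G$ to satisfy $B\cap B'\subseteq C\subseteq B\cup B'$, and the two "good" cases reduce correctly to pairs with smaller symmetric difference. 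The only delicate point is, as you say, ruling out the scenario where every vertex $C\notin\{B,B'\}$ of $G$ meets $B\setminus B'$ exactly in $\{i\}$; your $k\geq 3$ argument (then $e_B$ has no neighbour in the $1$-skeleton of $G$, impossible for a vertex of a polytope of dimension $\geq 2$ whose edges all have Johnson length two) and your $k=2$ coordinate computation (the coordinate indexed by the second element of $B\setminus B'$ forces $\lambda_B=\tfrac12$, and then the coordinate $i$ forces the weights of the other candidate vertices to vanish, so $m$ cannot be a relative interior point of a face of dimension $\geq 2$) are both sound, the latter using the standard fact that a relative interior point of a polytope admits a convex representation with strictly positive weight on every vertex. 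So the induction closes and the exchange axiom holds for the vertex family, giving $P=P(M)$.
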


In the subsequent sections we will be interested in polytopal subdivisions of hypersimplices and, more generally, arbitrary matroid polytopes.
The following concept is at the heart of our deliberations.
A \emph{split} of a polytope $P$ is a polytopal subdivision $\Sigma$ of $P$ with exactly two maximal cells.
The two maximal cells share a common codimension-$1$-cell, and its affine span is the \emph{split hyperplane} of $\Sigma$.

\begin{proposition}[{\cite[Lemma~5.1]{HerrmannJoswig:2008}}]\label{prop:splits_of_the_hypersimplex}
  For any proper non-empty subset $S\subsetneq[n]$ and any positive integer $\mu<d$ with $d-\#S<\mu<n-\#S$ the \emph{$(S,\mu)$-hyperplane} equation
  \begin{equation}\label{eq:split}
    \mu\sum_{i\in S} x_i \ = \ (d-\mu) \sum_{j\not\in S} x_j
  \end{equation}
  defines a split of $\Delta(d,n)$.
  Conversely, each split of $\Delta(d,n)$ arises in this way.
\end{proposition}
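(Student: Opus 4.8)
The plan is to reformulate the $(S,\mu)$-hyperplane intrinsically and then prove the two implications separately; the converse is the substantial part, and it splits into a combinatorial lemma (a split hyperplane cuts no edge of the hypersimplex) and an identification step (the resulting common facet is a matroid polytope with two connected components).

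First I would reformulate. On the affine hull $\operatorname{aff}\Delta(d,n)=\{x:\sum_i x_i=d\}$ the relation $\sum_{j\notin S}x_j=d-\sum_{i\in S}x_i$ turns~\eqref{eq:split} into the single equation $\sum_{i\in S}x_i=\nu$ with $\nu:=d-\mu$, and the hypotheses $0<\mu<d$, $d-\#S<\mu<n-\#S$ become $1\le\nu\le d-1$ together with the existence of $d$-sets $A,A'$ with $|A\cap S|<\nu<|A'\cap S|$ --- equivalently, $H_{S,\mu}$ meets the interior of $\Delta(d,n)$. For the forward implication, observe that along any edge of $\Delta(d,n)$, which is an exchange of one element, the quantity $\sum_{i\in S}x_i$ changes by $-1$, $0$ or $+1$; since $\nu$ is an integer, $H_{S,\mu}$ therefore meets the relative interior of no edge. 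Hence the two closed pieces $\Delta(d,n)\cap\{\sum_{i\in S}x_i\le\nu\}$ and $\Delta(d,n)\cap\{\sum_{i\in S}x_i\ge\nu\}$ have all their vertices among the vertices of $\Delta(d,n)$; both are full-dimensional by the interior condition, so together with their faces they form a polytopal subdivision with exactly two maximal cells, meeting in the facet $\Delta(d,n)\cap H_{S,\mu}$. That is a split with split hyperplane $H_{S,\mu}$.

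For the converse, let $\Sigma$ be a split of $\Delta(d,n)$ with maximal cells $C^{+},C^{-}$, common facet $F=C^{+}\cap C^{-}$, and split hyperplane $H=\operatorname{aff}F$; thus $F$ is a facet of each of $C^{+},C^{-}$, each $C^{\pm}$ lies in one of the two closed halfspaces bounded by $H$, and $H$ meets the interior of $\Delta(d,n)$. The first key point is that \emph{$H$ meets the relative interior of no edge of $\Delta(d,n)$}: every cell of $\Sigma$ is the convex hull of a set of vertices of $\Delta(d,n)$, so if $H$ crossed the relative interior of an edge $e_Ae_B$ of $\Delta(d,n)$ in a point $p$, then $e_A$ and $e_B$ would lie in opposite open halfspaces bounded by $H$, while $p$, being in the relative interior of that edge, could be written as a convex combination of vertices of $\Delta(d,n)$ only by using $e_A$ and $e_B$; as $C^{+}$ and $C^{-}$ are convex hulls of sets of vertices lying in the two respective closed halfspaces, $p$ would belong to neither, contradicting $C^{+}\cup C^{-}=\Delta(d,n)$. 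Using moreover that every $2$-face of $\Delta(d,n)$ is a triangle, one deduces that $F=\Delta(d,n)\cap H$ has only $0/1$ vertices and that each of its edges is an edge of $\Delta(d,n)$, whence by Proposition~\ref{prop:gelfand} the polytope $F$ is a matroid polytope, say $F=P(N)$.

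It remains to identify $H$. Since $F$ is a facet of the full-dimensional polytope $C^{+}$, we have $\dim F=n-2$, so by Proposition~\ref{prop:matroid-polytope}(i) the matroid $N$ has exactly two connected components: $N=N_1\oplus N_2$ on a partition $[n]=S\sqcup([n]\setminus S)$ with $\emptyset\ne S\ne[n]$ and $N_1,N_2$ connected, and $F=P(N)=P(N_1)\times P(N_2)$. Since $N_1$ and $N_2$ are connected, $\operatorname{aff}P(N_1)=\{\sum_{i\in S}x_i=\rank(N_1)\}$ in $\RR^S$ and $\operatorname{aff}P(N_2)=\{\sum_{j\notin S}x_j=\rank(N_2)\}$ in $\RR^{[n]\setminus S}$; as $\rank(N_1)+\rank(N_2)=d$, it follows that $H=\operatorname{aff}F$ is cut out, inside $\operatorname{aff}\Delta(d,n)$, by the equation $\sum_{i\in S}x_i=\rank(N_1)$. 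Thus $H$ is the $(S,\mu)$-hyperplane with the integer $\mu:=d-\rank(N_1)=\rank(N_2)$, and the fact that $H$ meets the interior of $\Delta(d,n)$ forces $0<\mu<d$ and $d-\#S<\mu<n-\#S$ by the reformulation of the first paragraph (it also rules out $\rank(N_i)=0$, which would make $H$ a coordinate hyperplane of $\Delta(d,n)$). The main obstacle is the last sentence of the third paragraph: promoting ``$H$ cuts no edge'' to ``$F$ is a matroid polytope'' requires showing that the cross-section $F=\Delta(d,n)\cap H$ acquires no new edges, and it is here that one must exploit the special geometry of the hypersimplex --- all of its $2$-faces being triangles, so that a hyperplane cutting no edge also cuts no $2$-face through its relative interior --- via a careful analysis of that cross-section; once that is in hand, the identification is essentially formal. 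Everything is vacuous when $d\in\{1,n-1\}$, where $\Delta(d,n)$ is a simplex and has no split.
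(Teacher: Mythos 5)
Your forward direction and your closing identification step are fine: the small imprecision that an edge whose endpoints both have $\sum_{i\in S}x_i=\nu$ lies \emph{inside} the hyperplane (so its relative interior is met after all) is harmless, because what the split criterion actually requires is that no edge has its endpoints strictly on opposite sides, and your integrality argument does give exactly that; likewise the passage from ``$F=\Delta(d,n)\cap H$ is a matroid polytope of dimension $n-2$'' to the $(S,\mu)$-form of $H$ via Proposition~\ref{prop:matroid-polytope}(i) and the interior condition is correct. Note also that the paper gives no proof of this proposition (it is quoted from \cite[Lemma~5.1]{HerrmannJoswig:2008}), so your argument has to stand on its own.

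The genuine gap is precisely the step you flag yourself: promoting ``$H$ separates no edge of $\Delta(d,n)$'' to ``every edge of $F=\Delta(d,n)\cap H$ is an edge of $\Delta(d,n)$'', i.e.\ to $F$ being a matroid polytope via Proposition~\ref{prop:gelfand}. This is the whole nontrivial content of the converse --- it is essentially the cited fact that the cells of a hypersimplex split form a matroid subdivision, cf.\ Remark~\ref{rem:splits-of-matroid-polytopes} --- and the observation that all $2$-faces are triangles does not deliver it: what must be excluded is an edge $[u,w]$ of the cross-section whose endpoints are \emph{non-adjacent} vertices of $\Delta(d,n)$, and that is a statement about the face $G\cong\Delta(k,2k)$, $k\geq 2$, spanned by $u$ and $w$, not about $2$-faces. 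The claim is true, but it needs an actual argument, for instance: restrict to $G$ (so $u=e_A$, $w=e_{[2k]\setminus A}$ are antipodal and $[u,w]$ is still an edge of $G\cap H$), write $H=\{c\cdot x=\alpha\}$ and set $\ell(x)=c\cdot x-\alpha$; for every $a\in A$, $b\notin A$ the antipodal pair $u'=u-e_a+e_b$, $w'=w-e_b+e_a$ satisfies $\ell(u')+\ell(w')=\ell(u)+\ell(w)=0$, so either both lie on $H$ --- but then the midpoint $(u+w)/2=(u'+w')/2$ lies on a second diagonal of $G$ and $[u,w]$ cannot be a face of $\conv(V\cap H)$ --- or they lie strictly on opposite sides, i.e.\ $c_b\neq c_a$; in the latter case ``no separated edges'' applied to the adjacent vertices $u-e_a+e_b$ and $u-e_a+e_{b'}$, resp.\ $u-e_a+e_b$ and $u-e_{a'}+e_b$, forces the sign of $c_b-c_a$ to be constant over all $a\in A$, $b\notin A$, contradicting $\sum_{a\in A}c_a=\sum_{b\notin A}c_b=\alpha$. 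Without an argument of this kind (or an explicit appeal to \cite[Proposition~3.4]{HerrmannJensenJoswigSturmfels:2009}, which would be somewhat circular here), the converse is not proved; with it, your outline does go through.
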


The split equation above is given in its homogeneous form. Since the hypersimplices are not full-dimensional this can be
rewritten in many ways.  For instance, taking $\sum_i x_i =d$ into account yields the inhomogeneous equation
\begin{equation}\label{eq:inhomogeneous}
  \sum_{i\in S} x_i \ = \ d-\mu \enspace ,
\end{equation}
which is equivalent to \eqref{eq:split}.
Note that the equation \eqref{eq:inhomogeneous} has a similar shape as the inequalities in the exterior description \eqref{eq:matroid-exterior} of the matroid polytopes.
A direct computation shows that the intersection of $\Delta(d,n)$ with the $(S,\mu)$-hyperplane is the product of hypersimplices
\begin{equation}\label{eq:product}
  \Delta(d-\mu,S)\times\Delta(\mu,[n]-S) \enspace ,
\end{equation}
where we use a complementary pair of subsets of $[n]$ (instead of cardinalities) in the second arguments of the
hypersimplex notation to fix
 the embedding into $\Delta(d,n)$ as a subpolytope.

\begin{remark}\label{rem:splits-of-matroid-polytopes}
  By \cite[Observation~3.1]{HerrmannJoswig:2008} a hyperplane $H$ which separates an arbitrary polytope $P$ defines a split of $P$ if and only if $H$ does not intersect any edge of $P$ in its relative interior:
  Clearly, if $H$ separates any edge of $P$ it does not define a subdivision of $P$ without new vertices.
  Conversely, if no edge of $P$ gets separated then $H$ induces a split with the two maximal cells $P\cap H^+$ and $P\cap H^-$, where $H^+$ and $H^-$ are the two affine halfspaces defined by $H$.
   In view of Proposition~\ref{prop:gelfand} we conclude that the (maximal) cells of any split of a hypersimplex form matroid polytopes.
  See also \cite[Proposition 3.4]{HerrmannJensenJoswigSturmfels:2009}. 
\end{remark}

We want to express Proposition~\ref{prop:splits_of_the_hypersimplex} in terms of matroids and their flats.

\begin{lemma}\label{lem:flat-split}
  Let $F$ be a proper flat such that $0<\rank(F) < \#F$.
  If there is an element $e$ in $[n]-F$ which is not a coloop then the $(F,d-\rank(F))$-hyperplane defines a split of $\Delta(d,n)$.
  In this case the intersection of $\Delta(d,n)$ with that split hyperplane equals
  \[
  \Delta(\rank(F),F)\times\Delta(d-\rank(F),[n]-F) \enspace ,
  \]
  and, in particular, the face $P_M(F)=P(M|F)\times P(M/F)$ is the intersection of $P(M)$ with the split hyperplane.
\end{lemma}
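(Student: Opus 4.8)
The plan is to deduce the splitting assertion from Proposition~\ref{prop:splits_of_the_hypersimplex} applied to the pair $(S,\mu)=(F,\,d-\rank(F))$, and then to read off the description of the split hyperplane by unwinding the inhomogeneous equation \eqref{eq:inhomogeneous} together with the product formula \eqref{eq:product} and Proposition~\ref{prop:matroid-polytope}(\ref{it:matroid-polytope:product}). So the real work is just to check that this pair satisfies the hypotheses of Proposition~\ref{prop:splits_of_the_hypersimplex}.

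First I would dispose of the arithmetic that does not involve the non-coloop hypothesis. Since $\rank(F)>0$, the flat $F$ is non-empty, and since $F$ is a \emph{proper} flat we have $\rank(F)<\rank([n])=d$ (a flat of full rank must equal $[n]$); hence $\mu:=d-\rank(F)$ is a positive integer strictly smaller than $d$, and $F$ is a non-empty proper subset of $[n]$. The inequality $d-\#F<\mu$ is literally the hypothesis $\rank(F)<\#F$. Thus the only condition of Proposition~\ref{prop:splits_of_the_hypersimplex} left to verify is $\mu<n-\#F$, i.e.\ $d-\rank(F)<n-\#F$.

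Extracting this last, \emph{strict} inequality from the existence of a non-coloop in $[n]-F$ is the one genuine step; everything else is formal, so I expect this to be the main (though not deep) obstacle. Subadditivity of the matroid rank function already gives $d=\rank([n])\le\rank(F)+\rank([n]-F)\le\rank(F)+(n-\#F)$, hence $d-\rank(F)\le n-\#F$, and it remains to rule out equality. Suppose $d-\rank(F)=n-\#F$, and let $e\in[n]-F$ be arbitrary. Since $[n]-e=F\cup\bigl(([n]-F)-e\bigr)$, subadditivity yields
\begin{align*}
  \rank([n]-e)\ &\le\ \rank(F)+\rank\bigl(([n]-F)-e\bigr)\ \le\ \rank(F)+\bigl(n-\#F\bigr)-1\\
  &=\ \rank(F)+\bigl(d-\rank(F)\bigr)-1\ =\ d-1 \enspace ,
\end{align*}
so no $d$-element subset of $[n]-e$ is independent; that is, every basis of $M$ contains $e$, and $e$ is a coloop. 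Contrapositively, if some $e\in[n]-F$ is not a coloop then $d-\rank(F)<n-\#F$, so Proposition~\ref{prop:splits_of_the_hypersimplex} applies and the $(F,d-\rank(F))$-hyperplane defines a split of $\Delta(d,n)$.

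It remains to identify the split hyperplane $H$. By \eqref{eq:inhomogeneous} with $S=F$ and $\mu=d-\rank(F)$, inside the simplex $\Delta$ the hyperplane $H$ is cut out by $\sum_{i\in F}x_i=d-\mu=\rank(F)$. Substituting $(S,\mu)=(F,\,d-\rank(F))$ into \eqref{eq:product} gives $\Delta(d,n)\cap H=\Delta(\rank(F),F)\times\Delta(d-\rank(F),[n]-F)$. Finally, since $P(M)\subseteq\Delta$, intersecting $P(M)$ with $H$ produces $\smallSetOf{x\in P(M)}{\sum_{i\in F}x_i=\rank(F)}=P_M(F)$ by the definition of $P_M(F)$, and $P_M(F)=P(M|F)\times P(M/F)$ by Proposition~\ref{prop:matroid-polytope}(\ref{it:matroid-polytope:product}). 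This paragraph is pure bookkeeping and presents no difficulty.
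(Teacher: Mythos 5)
Your proposal is correct and follows essentially the same route as the paper: verify the numerical hypotheses of Proposition~\ref{prop:splits_of_the_hypersimplex} for $(S,\mu)=(F,d-\rank(F))$, with the only substantive step being the strict inequality $d-\rank(F)<n-\#F$ extracted from the existence of a non-coloop in $[n]-F$, and then read off the intersection from \eqref{eq:product} and Proposition~\ref{prop:matroid-polytope}(\ref{it:matroid-polytope:product}). The paper derives that strict inequality directly from submodularity using $\rank([n]-e)=d$, whereas you prove the weak inequality and rule out equality by showing it would force every element of $[n]-F$ to be a coloop; this is the same computation organized contrapositively, so no essential difference.
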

\begin{proof} 
  Pick an element $e\in[n]$ in the complement of $F$ which is not a coloop.
  This yields $\rank([n]-e)=d$, whence the submodularity of the rank function implies
  \begin{align*}
    \#F-\rank(F) \ &\leq \ \#F-\rank(F) + \#\bigl([n]-(F+ e)\bigr)-\rank\bigl([n]-(F+ e)\bigr)\\ 
      &\leq \ \#([n]-e) - \rank([n]-e)\\
      &= \ n-1 - d \enspace .
  \end{align*}
  With our assumption $0 < \rank(F) < \#F$ we obtain
  \[
  d-\#F \ < \ d-\rank(F) \ \leq \ n - \#F -1 \enspace ,
  \]
  which is precisely the condition in Proposition~\ref{prop:splits_of_the_hypersimplex} for $S=F$ and $\mu=d-\rank(F)$.
  This means that the $(F,d-\rank(F))$-hyperplane defines a split of $\Delta(d,n)$.
  The intersection with $\Delta(d,n)$ can be read off from \eqref{eq:product}.
\end{proof}
The value $d-\rank(F)$ is determined by the flat $F$, whence we will shorten the notation of $(F,d-\rank(F))$-hyperplane to \emph{$F$-hyperplane}.
Throughout the rest of this paper we will assume that $n\geq 2$, i.e., $M$ has at least two elements.
If $M$ is additionally connected, this forces that $M$ does not have any loops or coloops.
The relevance of the previous lemma for the investigation of matroid polytopes stems from the following observation.
\begin{proposition}\label{prop:mp-facets}
  Suppose that $M$ is connected.
  Each facet of $P(M)$ is defined by the $F$-hyperplane for some flat $F$ with $0< \rank(F) < \#F$, or it is induced by one of the hypersimplex facets.
  In particular, the facets of $P(M)$ are either induced by hypersimplex splits or hypersimplex facets.
\end{proposition}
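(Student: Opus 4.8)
The plan is to combine Edmonds' exterior description \eqref{eq:matroid-exterior} with Lemma~\ref{lem:flat-split}. First I would record the consequences of connectedness: since $M$ is connected with $n\ge 2$, Proposition~\ref{prop:matroid-polytope}(i) gives $\dim P(M)=n-1$, so every facet of $P(M)$ has dimension $n-2$ inside the simplex $\Delta$, and $M$ has neither loops nor coloops.

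Next, by \eqref{eq:matroid-exterior} the polytope $P(M)$ is cut out, inside the affine hyperplane $\{\sum_i x_i=d\}$, by the inequalities $x_i\ge 0$ for $i\in[n]$ together with the flat inequalities $\sum_{i\in F}x_i\le\rank(F)$. Since any facet of a polytope presented as an intersection of halfspaces is supported by the boundary hyperplane of one of them, every facet $\mathcal F$ of $P(M)$ is either supported by some $\{x_i=0\}$ — in which case it is induced by a hypersimplex facet and we are done — or else $\mathcal F=P_M(F)$ for some flat $F$. Such an $F$ must be proper: the flat $\emptyset$ gives the trivial inequality $0\le 0$ and the flat $[n]$ gives $\sum_i x_i\le d$, which together with the ambient equation is no constraint at all. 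Hence $[n]-F\ne\emptyset$, and $\rank(F)\ge 1$ because $M$ is loopless.

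Then I would distinguish two cases according to whether $F$ is dependent. If $\rank(F)=\#F$, the restriction $M|F$ is the free matroid on $F$, so $P(M|F)$ is a point; by Proposition~\ref{prop:matroid-polytope}(iii) the face $P_M(F)=P(M|F)\times P(M/F)$ therefore has dimension $\dim P(M/F)\le (n-\#F)-1$ (as $P(M/F)$ lies in a proper affine subspace of $\RR^{[n]-F}$). Comparing with the facet dimension $n-2$ forces $\#F=1$, and then $\sum_{i\in F}x_i\le\rank(F)$ is exactly $x_i\le 1$, a hypersimplex facet. If instead $\rank(F)<\#F$, then $0<\rank(F)<\#F$; since $M$ has no coloops at all, every element of the nonempty set $[n]-F$ is a non-coloop, so Lemma~\ref{lem:flat-split} applies and the $F$-hyperplane, i.e. the $(F,d-\rank(F))$-hyperplane, defines a split of $\Delta(d,n)$. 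Finally, using $\sum_i x_i=d$, the inhomogeneous form \eqref{eq:inhomogeneous} with $S=F$ and $\mu=d-\rank(F)$ shows that this $F$-hyperplane is precisely $\{\sum_{i\in F}x_i=\rank(F)\}$, the hyperplane supporting $\mathcal F$; thus $\mathcal F$ is defined by the $F$-hyperplane, which is a hypersimplex split hyperplane.

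The only genuinely nonroutine point is the independent-flat case: one must notice that a flat inequality with $\rank(F)=\#F$ can be facet-defining only when $\#F=1$, which is exactly the dimension count above (equivalently, one could invoke that the restriction to a flacet is connected, while a free matroid on $\ge 2$ elements is disconnected). Everything else is just matching the split hyperplane of Proposition~\ref{prop:splits_of_the_hypersimplex} with the inequality coming from \eqref{eq:matroid-exterior}, and that matching is already packaged in Lemma~\ref{lem:flat-split} together with \eqref{eq:inhomogeneous}.
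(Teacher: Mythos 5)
Your proposal is correct and follows essentially the same route as the paper: read the facets off Edmonds' exterior description \eqref{eq:matroid-exterior}, dispose of the non-negativity constraints and the flats with $\rank(F)=\#F$ (the paper does this by noting $M|F$ is a disconnected coloop matroid for $\#F\geq 2$, which is exactly your dimension count via Proposition~\ref{prop:matroid-polytope}), and then invoke Lemma~\ref{lem:flat-split} for the remaining flats with $0<\rank(F)<\#F$. The identification of the supporting hyperplane with the $(F,d-\rank(F))$-hyperplane via \eqref{eq:inhomogeneous} is also how the paper concludes.
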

\begin{proof}
  Consider an arbitrary facet $\Phi$ of the polytope $P(M)$.
  From \eqref{eq:matroid-exterior} we know that $\Phi$ is either induced by an inequality of the form $\sum_{i\in F}x_i\leq\rank(F)$ for some flat $F$ of $M$, or $\Phi$ corresponds to one of the non-negativity constraints.
  The latter yield hypersimplex facets, and the same also holds for the singleton flats.
  We are left with the case where $F$ has at least two elements.

  The connectivity implies that $M$ has no coloops, as we assumed that $M$ has at least two elements.
  Suppose that $\rank(F)=\# F$.
  Then the restriction $M|F$ to the flat consists of coloops and thus is disconnected.
  Since $M$ is connected, this implies that the hyperplane $\sum_{i\in F}x_i=\rank(F)$ cuts out a face of codimension higher than one.
  A similar argument works if $\rank(F)=0$ as in this case the contraction $M/F$ is disconnected.
  We conclude that $0 < \rank(F) < \#F$.
  Now the claim follows from Lemma~\ref{lem:flat-split}.
\end{proof}
We call a flacet $F$ a \emph{split flacet} if the \emph{$F$-hyperplane} is a split of $\Delta(d,n)$.
Notice that Lemma~\ref{lem:flat-split} explains this notion in matroid terms.
\begin{example}\label{example:snowflake-matroid}
  Let $\snow$ be the matroid on $n=6$ elements and rank $d=2$, with the three non-bases $12$, $34$ and $56$;
  i.e., $\snow$ has exactly twelve bases.
  We call this matroid the \emph{snowflake matroid} for its relationship with the snowflake tree discussed in Example~\ref{example:snowflake-tree} below.
  The pairs $12$, $34$ and $56$ form flats of rank one.
  The matroid polytope $P(\snow)$ has nine facets: the six non-negativity constraints $x_i\geq 0$, together with $x_1+x_2\leq 1$, $x_3+x_4\leq 1$ and $x_5+x_6\leq 1$.
  These are split flacets, written as in (\ref{eq:inhomogeneous}).
\end{example}

Two splits of a polytope $P$ are \emph{compatible} if their split hyperplanes do not meet in a relatively interior point of $P$.

\begin{definition}
  The $(d,n)$-matroid $M$ is a \emph{split matroid} if its split flacets form a compatible system of splits of the affine hull of $P(M)$ intersected with the unit cube $[0,1]^n$.
\end{definition}
The matroid polytopes of the $(d,n)$-split matroids are exactly those whose faces of codimension at least two are contained in the boundary of the $(d,n)$-hypersimplex.
The notion of a split matroid is a bit subtle in the disconnected case, which we will look into next.
See also Proposition~\ref{prop:disconnected} (which characterizes the connected components of a split matroid) and Example~\ref{example:compatible_splits} below.
\begin{lemma}\label{lem:disconnected}
  Let $M$ be a split matroid which is disconnected.
  Then each connected component of $M$ is a split matroid, too.
\end{lemma}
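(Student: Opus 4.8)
The plan is to exploit the product structure of the matroid polytope of a disconnected matroid. Write $M = M' \oplus M''$, where $M'$ is one of the connected components of $M$ (on ground set $E'$ of rank $r'$, say) and $M''$ is the direct sum of the remaining components (on $E'' = [n] - E'$); by symmetry it suffices to show that $M'$ is a split matroid. Then $P(M) = P(M') \times P(M'')$, hence $\mathrm{aff}\,P(M) = \mathrm{aff}\,P(M') \times \mathrm{aff}\,P(M'')$ and the polytope $Q_M := \mathrm{aff}\,P(M) \cap [0,1]^n$ decomposes as $Q' \times Q''$ with $Q' := \mathrm{aff}\,P(M') \cap [0,1]^{E'}$ and $Q''$ defined analogously. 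If $\#E' \le 1$ then $P(M')$ is a point and $M'$ has no proper flat $F$ with $0 < \rank(F) < \#F$, hence no split flacets, and $M'$ is trivially a split matroid. So we may assume $\#E' \ge 2$; being a connected matroid with at least two elements, $M'$ has no loops and no coloops.

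The first step is to show that every split flacet of $M'$ gives rise to a split flacet of $M$. Let $F \subseteq E'$ be a split flacet of $M'$, so $F$ is a proper flat of $M'$ with $0 < \rank(F) < \#F$; its closure $\bar F$ in $M$ is a flat of $M$ with $\rank(\bar F) = \rank(F)$ that differs from $F$ only by loops of $M$ (all of which lie in $E''$), so on $\mathrm{aff}\,P(M)$ the $\bar F$-hyperplane of $M$ is the hyperplane $\{x : \sum_{i\in F} x_i = \rank(F)\}$. By Proposition~\ref{prop:matroid-polytope}(\ref{it:matroid-polytope:product}) this hyperplane cuts out $P_{M'}(F) \times P(M'')$, and comparing with $\dim P(M) = \dim P(M') + \dim P(M'')$ shows this is a facet of $P(M)$ precisely because $P_{M'}(F)$ is a facet of $P(M')$. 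Any flat of $M$ properly contained in $\bar F$ meets $E'$ in a flat of $M'$ properly contained in $F$, and — since loop coordinates vanish on $P(M)$ — if it defined the same facet of $P(M)$ then that sub-flat would define the same facet of $P(M')$, contradicting the minimality of the flacet $F$ of $M'$; thus $\bar F$ is a flacet of $M$. Finally, picking any $e \in E' - F$ (nonempty because $F$ is proper), $e$ is not a coloop of $M'$ and hence not a coloop of $M$, so Lemma~\ref{lem:flat-split} (equivalently Proposition~\ref{prop:splits_of_the_hypersimplex}) shows the $\bar F$-hyperplane is a split of $\Delta(d,n)$. Hence $\bar F$ is a split flacet of $M$.

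It remains to descend the compatibility. Since $M$ is a split matroid, its split flacets are pairwise compatible splits of $Q_M$, and in particular so is the subfamily coming from split flacets of $M'$ via the previous step. The split hyperplane attached to such a flacet only involves the coordinates in $E'$, so inside $\mathrm{aff}\,P(M) = \mathrm{aff}\,P(M') \times \mathrm{aff}\,P(M'')$ it has the form $H_F \times \mathrm{aff}\,P(M'')$, with $H_F$ the split hyperplane of $F$ inside $\mathrm{aff}\,P(M')$. If two distinct split flacets $F, G$ of $M'$ were incompatible in $Q'$, a point $y \in H_F \cap H_G \cap \mathrm{relint}(Q')$ together with any $z \in \mathrm{relint}(Q'')$ would — using $\mathrm{relint}(Q_M) = \mathrm{relint}(Q') \times \mathrm{relint}(Q'')$ — give a point of $\mathrm{relint}(Q_M)$ lying on both corresponding split hyperplanes of $M$, contradicting their compatibility. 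Therefore the split flacets of $M'$ form a compatible system of splits of $Q'$, that is, $M'$ is a split matroid.

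I expect the middle paragraph to be the crux: one must check carefully that the passage between $M'$ and $M$ preserves both defining properties of a split flacet — that the flat still defines a facet and is inclusion-minimal among flats defining it, and that its hyperplane still satisfies the split inequalities of Proposition~\ref{prop:splits_of_the_hypersimplex} — and the second point is exactly where one uses that a connected component with at least two elements has no coloops. The loop bookkeeping is a minor nuisance that can be confined entirely to $\mathrm{aff}\,P(M)$, where the loop coordinates are constantly zero; once the middle paragraph is settled, the compatibility transfer is a routine consequence of the product decomposition of $Q_M$.
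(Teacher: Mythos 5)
Your proof is correct and follows essentially the same route as the paper's: lift each split flacet of the component to a split flacet of $M$ via Lemma~\ref{lem:flat-split} and the product decomposition $P(M)=P(M')\times P(M'')$, then transfer pairwise compatibility from $M$ back to the component. Your version is somewhat more careful about loops, flacet minimality, and the choice of a non-coloop element inside the component, but these are refinements of the same argument rather than a different approach.
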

\begin{proof}
  Let $C$ be some connected component of the $(d,n)$-matroid $M$.
  Assume that $M|C$ has $n'=\#C$ elements and rank $d'$.
  Let $F$ and $G$ be two distinct split flacets of the connected matroid $M|C$.
  Notice that this can only happen if $M|C$ is not uniform.
  Now $F$ is a flat of $M$, and Lemma~\ref{lem:flat-split} gives us the $F$-hyperplane $H_F$ which yields a split of $\Delta(d,n)$ and a valid inequality of $P(M)$.
  Notice that we may assume that $[n]\setminus C$ contains an element which is not a coloop.
  We have
  \begin{equation}\label{eq:disconnected}
    \begin{aligned}
      H_F\cap\Delta(d,n) \ &= \ \Delta(\rank(F),F)\times\Delta(d-\rank(F),[n]-F) \\
      &= \ \Delta(\rank(F),F)\times\Delta(d'-\rank(F),C-F)\times\Delta(d-d',[n]-C)\enspace .
    \end{aligned}
  \end{equation}
  That intersection contains interior points of $\Delta(d,n)$, which is why this defines a facet of $P(M)$.
  By construction this defines a split flacet of $M$.
  The same applies to $G$, yielding another split hyperplane $H_G$, which also yields a split flacet of $M$.
  Since $M$ is a split matroid these two split flacets of $M$ are compatible.
  The explicit description in (\ref{eq:disconnected}) shows that the split flacets $F$ and $G$ of $M|C$ are compatible, too.
  We conclude that $M|C$ is a split matroid.
\end{proof}
We conclude that it suffices to analyze those split matroids which are connected.
The following characterization of split matroids does not require any reference to polyhedral geometry.
\begin{theorem}\label{thm:split-matroid}
  Let $M$ be a connected matroid.
  The matroid $M$ is a split matroid if and only if for each split flacet $F$ the restriction $M|F$ and the contraction $M/F$ both are uniform.
\end{theorem}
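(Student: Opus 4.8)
The plan is to use the polyhedral reformulation stated just after the definition: a connected matroid $M$ is a split matroid precisely when every face of $P(M)$ of codimension at least two lies in the boundary $\partial\Delta(d,n)$. Since any such face is a proper face of some facet, and by Proposition~\ref{prop:mp-facets} every facet of $P(M)$ is either a hypersimplex facet (which lies in $\partial\Delta(d,n)$ automatically) or the face $P_M(F)=P(M|F)\times P(M/F)$ of a split flacet $F$ (Proposition~\ref{prop:matroid-polytope}(iii)), the statement reduces to understanding the proper faces of the $P_M(F)$. I would record at the outset that, for a split flacet $F$ of a connected matroid, the numerical constraints built into Proposition~\ref{prop:splits_of_the_hypersimplex} and Lemma~\ref{lem:flat-split} give $0<\rank(F)<\#F$ and $0<d-\rank(F)<n-\#F$, and that applying the dimension formula of Proposition~\ref{prop:matroid-polytope}(i) to the facet $P_M(F)$ of the $(n-1)$-dimensional polytope $P(M)$ forces both $M|F$ and $M/F$ to be connected.

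For the direction ``uniform $\Rightarrow$ split matroid'', I would argue as follows. If $M|F$ and $M/F$ are uniform for every split flacet $F$, then $P_M(F)=\Delta(\rank(F),F)\times\Delta(d-\rank(F),[n]-F)$ is a product of two hypersimplices, both of positive dimension by the numerics above. Every proper face of such a product lies on a coordinate hyperplane $\{x_i=0\}$ or $\{x_i=1\}$, hence in $\partial\Delta(d,n)$; together with the fact that hypersimplex facets lie in $\partial\Delta(d,n)$, and that every face of $P(M)$ of codimension at least two is a proper face of some facet, the reformulation then gives that $M$ is a split matroid.

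For the converse, assume $M$ is a split matroid and $F$ is a split flacet; I would show that $M|F$ is uniform, the argument for $M/F$ being completely analogous with the roles of $F$ and $[n]-F$, and of loops and coloops, interchanged. Since $F$ is a flat, $M/F$ has no loops; since $M$ is connected with at least two elements, $M/F$ has no coloops; and $\#([n]-F)\ge 2$ since the split condition for the $F$-hyperplane would otherwise fail. Hence $P(M/F)$ is contained in no coordinate hyperplane. Now suppose $M|F$ were not uniform. Then $P(M|F)$ is a full-dimensional (as $M|F$ is connected) proper subpolytope of $\Delta(\rank(F),F)$, so it has a facet $\Psi$ whose supporting hyperplane is none of the $\{x_i=0\}$, $\{x_i=1\}$; and such a $\Psi$ is contained in no coordinate hyperplane, since $\Psi\subseteq\{x_i=c\}$ would force the proper face $\{x_i=c\}\cap P(M|F)$ to equal the facet $\Psi$ (as $x_i$ is nonconstant on $P(M|F)$), contradicting the choice of $\Psi$. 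But then $\Psi\times P(M/F)$ is a facet of $P_M(F)$, hence a codimension-two face of $P(M)$, so by the reformulation it lies in $\partial\Delta(d,n)=\bigcup_{i,c}\{x_i=c\}$. A polytope contained in a finite union of affine hyperplanes lies in one of them, so $\Psi\times P(M/F)\subseteq\{x_i=c\}$ for some $i,c$; if $i\in F$ this forces $\Psi\subseteq\{x_i=c\}$, if $i\notin F$ it forces $P(M/F)\subseteq\{x_i=c\}$, and both are impossible.

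The hard part is this converse, and it rests entirely on one geometric observation: a facet of the matroid polytope of a connected matroid that lies in a coordinate hyperplane must already be one of the hypersimplex facets. Once that is in hand, non-uniformity of $M|F$ (or of $M/F$) instantly produces a codimension-two face of $P(M)$ that is not contained in $\partial\Delta(d,n)$, contradicting the split-matroid condition. The only further care needed is with the degenerate small cases --- a hypersimplex factor of dimension $0$, or $\#([n]-F)=1$ --- which are ruled out by the inequalities that are part of the definition of a split flacet, together with the dimension formula forcing $M|F$ and $M/F$ to be connected.
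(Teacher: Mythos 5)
Your geometric core is sound and runs very close to the paper's own proof, but there is one genuine gap: both halves of your argument are funneled through the sentence after the definition (``$M$ is split iff every face of $P(M)$ of codimension at least two lies in $\partial\Delta(d,n)$''), and that sentence is an unproved remark, not an established result. The direction you invoke to finish ``uniform $\Rightarrow$ split'' --- namely ``codimension-$\geq 2$ faces in the boundary $\Rightarrow$ the split flacets are compatible'' --- is not immediate from the definition: compatibility is a condition on the $F$- and $G$-hyperplanes inside all of $\Delta(d,n)$, whereas the boundary condition only constrains $P(M)$. A priori $P_M(F)$ could be a proper subpolytope of $H_F\cap\Delta(d,n)$ (where $H_F$ denotes the $F$-hyperplane), and then knowing that the codimension-two faces of $P(M)$ lie in $\partial\Delta(d,n)$ says nothing about whether $H_F\cap H_G$ meets the interior of the hypersimplex. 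In fact the natural proof of that direction of the remark is essentially the theorem itself, so as written this half of your proof is circular.

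The gap is short to close with what you already have, and the closing step is exactly the paper's: uniformity of $M|F$ and $M/F$ gives $P_M(F)=\Delta(\rank(F),F)\times\Delta(d-\rank(F),[n]-F)$, which by Lemma~\ref{lem:flat-split} is all of $H_F\cap\Delta(d,n)$; since the $G$-flacet inequality is valid on $P(M)$ and hence on $H_F\cap\Delta(d,n)$, the set $H_F\cap H_G\cap\Delta(d,n)$ is a proper face of $H_F\cap\Delta(d,n)$ and therefore lies in $\partial\Delta(d,n)$, i.e., the two splits are compatible (alternatively, use Proposition~\ref{prop:compatible_splits}). By contrast, the direction of the remark you use in the converse --- split $\Rightarrow$ boundary condition --- is genuinely easy and should just be said: every codimension-two face of $P(M)$ is an intersection of two facets, and by Proposition~\ref{prop:mp-facets} together with compatibility each such intersection lies in $\partial\Delta(d,n)$. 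Your contrapositive argument there (connectivity of $M|F$ and $M/F$ via the dimension formula, then a facet $\Psi$ of a non-uniform $P(M|F)$ off the coordinate hyperplanes, so that $\Psi\times P(M/F)$ is a codimension-two face of $P(M)$ not contained in $\partial\Delta(d,n)$) is correct and is a nice variant of the paper's ``a full-dimensional subpolytope of $H_F\cap\Delta(d,n)$ whose relative boundary lies in the relative boundary must be the whole intersection''; with the compatibility step repaired as above, the proof is complete and essentially the paper's.
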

\begin{proof}
  Assume that $M$ is a split matroid and $F$ is a split flacet.
  Let $r$ be the rank of $F$. As $F$ does not correspond to a hypersimplex facet we know that $r<d$. Hence $F$ is not the entire ground set $[n]$.
  In particular, all conditions for Lemma~\ref{lem:flat-split} are satisfied.
   Moreover, the intersection of any two facets of the matroid polytope $P(M)$ is contained in the boundary of the hypersimplex $\Delta(d,n)$.
   This implies that the intersection of the split hyperplane of $F$ with $P(M)$ coincides with the intersection of that hyperplane with $\Delta(d,n)$.
  By Lemma~\ref{lem:flat-split} we have that $M|F$ is the uniform matroid of rank $r$ on the set $F$, and $M/F$ is the uniform matroid of rank $d-r$ on the set $[n]-F$.

  To prove the converse, let $F$ and $G$ be two distinct split flacets of $M$ with uniform restrictions and contractions.
  We need to show that the hypersimplex splits corresponding to $F$ and $G$ are compatible.
  By Proposition~\ref{prop:matroid-polytope}(\ref{it:matroid-polytope:product}) and Lemma~\ref{lem:flat-split} we have
  \begin{equation}\label{eq:FG}
     P_M(F) \ = \ P(M|F)\times P(M/F) \ = \ \Delta(\rank(F),F)\times\Delta(d-\rank(F),[n]-F) \enspace .
  \end{equation}
  This implies that $P_M(F)$ is exactly the intersection of the $F$-hyperplane with $\Delta(d,n)$.
  In particular, since the $G$-hyperplane is a valid inequality for $P_M(F)$, the $F$- and $G$-hyperplanes do not share any points in the relative interior of $\Delta(d,n)$.
  This means that the corresponding hypersimplex splits are compatible.
\end{proof}

\begin{remark}\label{rem:partition}
  Equation~\eqref{eq:FG} says that the face $P_M(F)$ corresponding to a flacet $F$ of split a matroid is the matroid polytope of a partition matroid, i.e., a direct sum of uniform matroids.
\end{remark}
  
A flat is called \emph{cyclic} if it is a union of circuits.
This notion gives rise to yet another cryptomorphic way of defining matroids; see \cite[Theorem 3.2]{BoninMier:2008}.
A matroid whose cyclic flats form a chain with respect to inclusion is called \emph{nested}.
Such matroids will play a role in Section~\ref{sec:rays} below.
\begin{proposition}\label{prop:cyclic_flat}
  Each flacet $F$ of $M$ with at least two elements is a cyclic flat.
  This property holds even if $M$ is not connected.
\end{proposition}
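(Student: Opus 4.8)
The plan is to reduce cyclicity to a statement about coloops, settle the connected case by a short closure argument, and reduce the general case to it. Recall first that, since for a flat $F$ the circuits of $M$ contained in $F$ are exactly the circuits of $M|F$, and an element of $F$ lies in some circuit of $M|F$ precisely when it is not a coloop of $M|F$, the flat $F$ is a union of circuits if and only if $M|F$ has no coloop. So it is enough to prove: if $F$ is a flacet of $M$ with $\#F\ge2$, then $M|F$ has no coloop.

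Assume first that $M$ is connected, so $M$ has neither loops nor coloops because $n\ge2$. Let $F$ be a flacet with $\#F\ge2$ and suppose, for a contradiction, that $e\in F$ is a coloop of $M|F$. Then $\rank(F\setminus\{e\})=\rank(F)-1$, and $\rank(F)\ge2$: if $\rank(F)=1$ then $M|F$ is a loopless rank-one matroid on at least two elements, hence uniform and coloop-free, while $\rank(F)=0$ is impossible for $\#F\ge2$ in the loopless case. Let $G$ be the closure of $F\setminus\{e\}$ in $M$; then $G$ is a flat with $\rank(G)=\rank(F)-1\ge1$, so $G\ne\emptyset$, and $e\notin G$ since otherwise $\rank(G)\ge\rank(F)$, whence $G\subsetneq F\subsetneq[n]$. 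For every $x\in P(M)$ we have $\sum_{i\in G}x_i\le\rank(G)$ by \eqref{eq:matroid-exterior} and $x_e\le1$ since $P(M)\subseteq[0,1]^n$; as $F\subseteq G\cup\{e\}$ and $x\ge0$,
\[
\sum_{i\in F}x_i\ \le\ \sum_{i\in G}x_i+x_e\ \le\ \rank(G)+1\ =\ \rank(F)\enspace.
\]
Hence every $x\in P_M(F)$ also satisfies $\sum_{i\in G}x_i=\rank(G)$, i.e.\ $P_M(F)\subseteq P_M(G)$. Since $P_M(F)$ is a facet and $P_M(G)$ is a face containing it, either $P_M(G)=P_M(F)$ — impossible, as then the flat $G\subsetneq F$ would define the same facet, contradicting minimality of the flacet $F$ — or $P_M(G)=P(M)$; the latter means every basis meets $G$ in $\rank(G)$ elements, so $\rank(G)+\rank([n]\setminus G)=d$, making $G$ a separator set and contradicting connectedness since $\emptyset\ne G\subsetneq[n]$. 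Thus $M|F$ has no coloop. (Alternatively, the connected case follows from the characterization of flacets of connected matroids by connected restrictions and contractions recorded after Proposition~\ref{prop:matroid-polytope}.)

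For general $M$, write $M=M_1\oplus\dots\oplus M_k$ with connected components $M_j$ on ground sets $C_j$, so that $P(M)=P(M_1)\times\dots\times P(M_k)$. A flacet $F$ decomposes as $F=\bigsqcup_j(F\cap C_j)$ with each $F\cap C_j$ a flat of $M_j$; since $\rank(F)=\sum_j\rank(F\cap C_j)$, splitting the facet-defining inequality over the factors shows $P_M(F)=\prod_j P_{M_j}(F\cap C_j)$. Being a facet of a product, exactly one factor, say $P_{M_i}(F\cap C_i)$, is a facet of $P(M_i)$, while $P_{M_j}(F\cap C_j)=P(M_j)$ for $j\ne i$, which forces $F\cap C_j$ to be a separator of $M_j$ and hence to equal $\emptyset$ or $C_j$. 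Using minimality of $F$ together with the observation that a full component is a separator of $M$ on which $\sum_{i\in C_j}x_i$ is constant, one gets $F\cap C_j=\emptyset$ for all $j\ne i$, so $F\subseteq C_i$ and $F$ is in fact a flacet of $M_i$. As $\#F\ge2$ we have $\#C_i\ge2$, so $M_i$ is connected with at least two elements and the connected case applies: $F$ is a union of circuits of $M_i$; since $C_i$ is a separator, the circuits of $M|C_i$ are precisely the circuits of $M$ contained in $C_i$, so $F$ is a union of circuits of $M$, i.e.\ a cyclic flat of $M$.

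I expect the only real subtlety to lie in the general case: one must establish the product decomposition $P_M(F)=\prod_j P_{M_j}(F\cap C_j)$ and, above all, run the minimality argument carefully enough to pin $F$ down inside a single component and to verify that its trace there is again inclusion-minimal — this bookkeeping becoming slightly delicate around components that are single loops, so it may be cleanest to first dispose of loops. The closure argument in the connected case is short and is the genuine content of the statement.
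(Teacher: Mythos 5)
Your proposal is correct in substance but takes a genuinely different route from the paper's. The paper's proof is essentially two lines: by Remark~\ref{rem:flacets} (resting on the flacet characterization of Proposition~\ref{prop:matroid-polytope}), the restriction $M|F$ to a flacet is connected; a connected matroid with at least two elements has no coloops, so every element of $F$ lies in a circuit of $M|F$, such a circuit is a circuit of $M$, and $F$ is cyclic. You prove the same coloop-freeness of $M|F$ directly from the polyhedral definition of a flacet: if $e\in F$ were a coloop of $M|F$, the flat $G=\mathrm{cl}(F-e)$ would satisfy $P_M(F)\subseteq P_M(G)$ via \eqref{eq:matroid-exterior}, and facetness together with inclusion-minimality of $F$ would force $P_M(G)=P(M)$, making $G$ a proper separator and contradicting connectivity; the disconnected case is then reduced to the connected one through $P_M(F)=\prod_j P_{M_j}(F\cap C_j)$ (components $M_j$ on ground sets $C_j$) and minimality, which pins $F$ inside a single component. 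What this buys is self-containedness: you do not need the cited characterization of flacets or the connectivity claim of Remark~\ref{rem:flacets}; the cost is that you must redo by hand, in the disconnected case, the bookkeeping that the paper absorbs into that remark. All the individual steps you carry out (the closure argument, the face-of-a-product decomposition, the separator conclusion, the minimality transfer to $M_i$) are sound.

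The one caveat is the loop issue you flag but leave unresolved, and it is a genuine hypothesis rather than mere bookkeeping: if $M$ has a loop, every flat contains it, so your localization $F\subseteq C_i$ cannot hold, and in fact the statement itself fails as literally written. For $M=U_{2,3}\oplus U_{0,1}$ with loop $4$, the inclusion-minimal flat defining the facet $x_1=1$ is $\{1,4\}$, a two-element flacet whose restriction is disconnected and which is not a union of circuits. So loop-freeness must be assumed (or loops disposed of first, with the statement adjusted); note that the paper's own proof makes the same tacit assumption, since the claim in Remark~\ref{rem:flacets} that restrictions to flacets are connected also fails in this example. Apart from making that hypothesis explicit, your argument is complete.
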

\begin{proof}
  Let $F$ be a flacet of $M$.
  The restriction $M|F$ is connected, even if $M$ itself is not connected, see also Remark~\ref{rem:flacets}.
  Thus for each $e\in F$ there exists a circuit $e\in C\subseteq F$ in $M|F$ that connects $e$ with an other element of $F$.
  This circuit of $M|F$ is a minimal dependent set in $M$.
  Hence $F$ a cyclic flat.
\end{proof}

The compatibility relation among the hypersimplex splits was completely described in \cite[Proposition 5.4]{HerrmannJoswig:2008}.
The following is a direct consequence.
Notice that this characterization of split compatibility is a tightening of the submodularity property of the rank function.

\begin{proposition}\label{prop:compatible_splits}
  Assume that $M$ is connected.
  Let $F$ and $G$ be two distinct split flacets.
  The splits obtained from the $F$- and the $G$-hyperplane are compatible if and only if
  \[
  \size(F\cap G)+d \ \leq \ \rank(F)+\rank(G) \enspace .	
  \]
  For instance, this condition is satisfied if $F\cap G$ is an independent set and $F + G$ contains a basis.
\end{proposition}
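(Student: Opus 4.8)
The plan is to decide, by an elementary feasibility analysis, when the split hyperplanes $H_F$ and $H_G$ meet in the relative interior of $\Delta(d,n)$. Since $F$ and $G$ are split flacets of the connected matroid $M$, Lemma~\ref{lem:flat-split} applies to both and describes the slices explicitly: $H_F\cap\Delta(d,n)=\Delta(\rank(F),F)\times\Delta(d-\rank(F),[n]-F)$, and likewise for $G$, and in particular $0<\rank(F)<\#F$, $0<\rank(G)<\#G$. By definition the two hypersimplex splits are compatible exactly when there is no point $x\in\RR^n$ with $0<x_i<1$ for all $i$, $\sum_i x_i=d$, $\sum_{i\in F}x_i=\rank(F)$ and $\sum_{i\in G}x_i=\rank(G)$.

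To analyse this I would partition $[n]$ into the four blocks $A=F\cap G$, $B=F\setminus G$, $C=G\setminus F$ and $D=[n]\setminus(F\cup G)$, and write $a,b,c,e$ for the sums of $x$ over these blocks. The four linear conditions then force
\[
b=\rank(F)-a,\qquad c=\rank(G)-a,\qquad e=d-\rank(F)-\rank(G)+a,
\]
so $a$ is the only free parameter. For a $k$-element block, coordinate values in $(0,1)$ with a prescribed sum exist if and only if either the block is empty and the sum is $0$, or the block is non-empty and the sum lies strictly between $0$ and $k$. Hence an admissible $x$ exists if and only if $a$ can be chosen so that all four of $a,b,c,e$ satisfy this, i.e.\ if and only if a certain intersection of open intervals for $a$ is non-empty.

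The crux is to check that this interval is non-empty precisely when $\rank(F)+\rank(G)-d<\#(F\cap G)$. One writes down the lower and upper bounds on $a$ contributed by the four blocks and verifies that every pair of them except the pair $\rank(F)+\rank(G)-d<\#A$ is automatically consistent: this uses $\rank(F),\rank(G)<d$, the inequalities that express that $H_F$ and $H_G$ are genuine splits ($\rank(F)<\#F$, $d-\rank(F)<n-\#F$, and similarly for $G$), and the connectivity of $M$ (no loops or coloops; and $\rank(S)+\rank([n]-S)>d$ for every proper non-empty $S$). When $A=\emptyset$ one instead has $a=0$ and the condition collapses to $\rank(F)+\rank(G)<d$, after using connectivity to rule out the degenerate configuration $F\cup G=[n]$ with $\rank(F)+\rank(G)=d$. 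Thus $H_F$ and $H_G$ meet in the relative interior of $\Delta(d,n)$ if and only if $\rank(F)+\rank(G)-d<\#(F\cap G)$; negating this is the claim. Alternatively, one may specialise the complete classification of compatible hypersimplex splits, \cite[Proposition~5.4]{HerrmannJoswig:2008}, to the values $\mu=d-\rank(F)$ and $\nu=d-\rank(G)$.

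I expect the only genuinely non-routine point to be the nested configuration $F\subsetneq G$: there $a=\rank(F)$ is forced, the surviving constraint becomes $\rank(G)-\rank(F)<\#(G\setminus F)$, i.e.\ $\corank(F)<\corank(G)$, and this must be proved (so that the block analysis agrees with the uniform criterion) rather than assumed. It follows from Proposition~\ref{prop:cyclic_flat}: $G$ is a cyclic flat, so each element of $G\setminus F$ lies in a circuit $C\subseteq G$ of $M$; then $C\cap F$ is a proper subset of the circuit $C$, hence independent, so
\[
\rank_{M/F}(C\setminus F)\ \le\ \rank(C)-\rank(C\cap F)\ =\ (\#C-1)-\#(C\cap F)\ <\ \#(C\setminus F),
\]
whence $C\setminus F\subseteq G\setminus F$ is dependent in $M/F$ and $\corank(F)<\corank(G)$. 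Finally, the stated sufficient condition is immediate: if $F\cap G$ is independent then $\rank(F\cap G)=\#(F\cap G)$, and if $F+G$ contains a basis then $\rank(F\cup G)=d$, so submodularity of the rank function gives $\rank(F)+\rank(G)\ge\rank(F\cup G)+\rank(F\cap G)=d+\#(F\cap G)$.
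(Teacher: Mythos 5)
Your argument is correct, and it reaches the conclusion by a genuinely different route than the paper. The paper quotes the classification of compatible hypersimplex splits from \cite[Proposition 5.4]{HerrmannJoswig:2008} (compatibility holds if and only if exactly one of four inequalities is satisfied) and then kills the three unwanted inequalities using the cyclic-flat axioms of \cite{BoninMier:2008}, applied to the maximal cyclic flat inside $F\cap G$ with a coloop count, together with submodularity and connectedness. You instead re-derive the compatibility criterion from scratch by the block-by-block interior-point feasibility analysis; the conditions you must verify ``automatically'' are precisely the negations of the paper's three unwanted inequalities, namely $\size(F-G)>\rank(F)-\rank(G)$, $\size(G-F)>\rank(G)-\rank(F)$ and $\size([n]-F-G)>d-\rank(F)-\rank(G)$, so the mathematical substance is the same, but your proofs of them differ: the last follows from connectivity and submodularity (as in the paper), and the first two you establish in the nested case by a direct circuit argument, which is correct and for which Proposition~\ref{prop:cyclic_flat} is exactly the right input. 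Two small caveats. First, in the non-nested case ($G\not\subseteq F$, say) the inequality $\size(F-G)>\rank(F)-\rank(G)$ does \emph{not} follow from the split inequalities and connectivity alone, as your list of ingredients suggests; you need the flat property of $F$ and $G$, e.g.\ $\rank(F)\le\rank(F\cap G)+\size(F-G)\le\rank(G)+\size(F-G)$, where equality forces $\rank(F\cap G)=\rank(G)$, hence $G$ is the closure of $F\cap G$ and thus $G\subseteq F$, a contradiction --- easy, but it must be said, and the symmetric statement is needed as well. Second, the empty-block bookkeeping (the cases $F\cap G=\emptyset$, $F\cup G=[n]$, and the two nested configurations) has to be carried out explicitly; your treatment of $F\cap G=\emptyset$ with $F\cup G=[n]$ via connectivity is the right one, and in the remaining cases the forced value of $a$ is indeed admissible by the split inequalities. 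Your ``for instance'' computation coincides with the paper's. What your approach buys is independence from the precise statement of \cite[Proposition 5.4]{HerrmannJoswig:2008} and from the Bonin--de Mier axioms; what the paper's approach buys is a uniform three-line treatment of the nested and non-nested cases at the price of citing both results.
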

\begin{proof}
  The $F$- and the $G$-hyperplane both define splits.
  \cite[Proposition 5.4]{HerrmannJoswig:2008} states that two splits are compatible if and only if exactly one of the following four inequalities hold.
  \begin{align*}
	\size(F\cap G) \ &\leq \ \rank(F)+\rank(G)-d\\
	\size(F-G) \ &\leq \ \rank(F)-\rank(G)\\
       	\size(G-F) \ &\leq \ \rank(G)-\rank(F)\\
       	\size([n]-F-G) \ &\leq \ d-\rank(F)-\rank(G)
  \end{align*}
  We will show that the last three conditions never hold for a connected matroid.

  We denote by $H\subseteq F\cap G$ the inclusion maximal cyclic flat that is contained in $F\cap G$.
  Then $c:=\size(F\cap G)-H$ is the number of coloops in $M|(F\cap G)$.
  By Proposition~\ref{prop:cyclic_flat} the flacet $F$ is a cyclic flat, too.
  Now \cite[Theorem 3.2]{BoninMier:2008} implies that
  \[
  \size(F-G) \ = \ \size(F-H)-c \ > \ \rank(F)-\rank(H)-c \ = \ \rank(F)-\rank(G\cap F) \ \geq \ \rank(F)-\rank(G) \enspace .
  \] 
  Similarly we get $\size(G-F) > \rank(G)-\rank(F)$.
  The submodularity of the rank function yields
  \[
  \begin{aligned}
    \size([n]-(F+G))  + \rank(F) + \rank(G) -d &\geq \rank([n]-(F+G)) + \rank(F+G) + \rank(F\cap G) - d\\
                                                   &\geq \rank([n])-d +\rank(F\cap G)\\ 
                                                   &\geq 0 \enspace .
  \end{aligned}
  \]
  In the above equality holds if and only if the matroid is the direct sum $F\oplus G \oplus ([n]-(F+G))$ and the set $[n]-(F+G)$ consists of coloops.
 
  If $F\cap G$ is independent and $F+G$ has full rank $d$ we have
  \begin{equation}\label{eq:compatible}
    \size(F\cap G) \ = \ \rank(F\cap G)+\underbrace{\rank(F + G)-d}_{=\ 0} \ \leq \ \rank(F)+\rank(G)-d \enspace .
  \end{equation}
\end{proof}

\begin{proposition}\label{prop:disconnected}
   A matroid $M$ is a split matroid if and only if at most one connected component is a non-uniform split matroid and all other connected components are uniform.
\end{proposition}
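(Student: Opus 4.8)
The plan is to deduce the statement from Lemma~\ref{lem:disconnected} together with a dictionary between the split flacets of a disconnected matroid and those of its connected components. The connected case is immediate: then $M$ is its own unique connected component, which is a split matroid exactly when $M$ is, and it is uniform or not. So assume $M = C_1 \oplus \dots \oplus C_k$ with $k \ge 2$, write $d_j$ for the rank of $M|C_j$, and observe that we may take $M$ to be loopless and coloopless, since a loop or a coloop splits off as a one-element uniform direct summand that affects neither side of the claim. I would use two facts. \emph{(A)} A connected matroid has a split flacet if and only if it is non-uniform: for connected non-uniform $N$ the polytope $P(N)$ is a full-dimensional proper subpolytope of $\Delta$, so by~\eqref{eq:matroid-exterior} some facet of $P(N)$ is defined by a flat inequality $\sum_{i\in F}x_i\le\rank(F)$ that fails somewhere on $\Delta$ and hence has $\#F\ge 2$, and the argument in the proof of Proposition~\ref{prop:mp-facets} then forces $0<\rank(F)<\#F$, so $F$ is a split flacet by Lemma~\ref{lem:flat-split}. \emph{(B)} For a flat $F\subseteq C_j$, Proposition~\ref{prop:matroid-polytope}(\ref{it:matroid-polytope:product}) yields $P_M(F)=P_{M|C_j}(F)\times\prod_{i\ne j}P(C_i)$, so $P_M(F)$ has codimension one in $P(M)$ exactly when $P_{M|C_j}(F)$ has codimension one in $P(M|C_j)$; combining this with Lemma~\ref{lem:flat-split} and the fact that inclusion-minimality of the defining flat transfers both ways, a flat $F\subseteq C_j$ is a split flacet of $M$ if and only if it is a split flacet of $M|C_j$.

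For the forward implication, suppose $M$ is a split matroid. By Lemma~\ref{lem:disconnected} every $M|C_j$ is a split matroid, so it suffices to show that at most one $C_j$ is non-uniform. If $C_1$ and $C_2$ were both non-uniform, then by (A) there are split flacets $F_1\subseteq C_1$ of $M|C_1$ and $F_2\subseteq C_2$ of $M|C_2$, and by (B) both $F_1$ and $F_2$ are split flacets of $M$. Now $\operatorname{aff}P(M)\cap[0,1]^n$ is a product of hypersimplices whose $C_1$- and $C_2$-factors are $\Delta(d_1,C_1)$ and $\Delta(d_2,C_2)$; the hyperplanes $\sum_{i\in F_1}x_i=\rank(F_1)$ and $\sum_{i\in F_2}x_i=\rank(F_2)$ involve disjoint sets of coordinates, and each meets the relative interior of its own factor because it defines a split there. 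Hence they meet in a relatively interior point of the product, contradicting that the split flacets of $M$ form a compatible system. So at most one component is non-uniform.

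For the converse, suppose $C_1$ is a split matroid (possibly uniform) while $C_2,\dots,C_k$ are uniform, and let $F$ be any split flacet of $M$. Then $\#F\ge 2$, so $F$ is a cyclic flat of $M$ by Proposition~\ref{prop:cyclic_flat}, hence a direct sum of cyclic flats of the components; since the restriction $M|F$ to a flacet is connected, exactly one summand $F\cap C_j$ is non-empty, so $F\subseteq C_j$. If $j\ne 1$, then $F$ is a non-empty cyclic flat of the uniform matroid $M|C_j$, which forces $F=C_j$; but then $\sum_{i\in C_j}x_i=\rank(F)=d_j$ is a defining equation of $\operatorname{aff}P(M)$, so the $F$-hyperplane cannot be facet-defining for $P(M)$ — a contradiction. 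Hence $F\subseteq C_1$, and by (B) $F$ is a split flacet of $M|C_1$. Since $M|C_1$ is a split matroid, any two of its split flacets are compatible splits of $\Delta(d_1,C_1)$; these splits involve only $C_1$-coordinates and $\operatorname{aff}P(M)\cap[0,1]^n$ is a direct product with first factor $\operatorname{aff}P(M|C_1)\cap[0,1]^{C_1}$, so the corresponding splits remain compatible in the whole product. Therefore $M$ is a split matroid; when every $C_i$ is uniform the same reasoning shows that $M$ has no split flacet at all.

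The step I expect to be most delicate is fact (B): in the disconnected case the facet-defining inequality of $P(M)$ is not unique (Remark~\ref{rem:flacets}), so the transfer of the flacet property between $M$ and $M|C_j$ must be argued through the product decomposition of $P_M(F)$ and the dimension formula in Proposition~\ref{prop:matroid-polytope} rather than from a verbatim comparison of inequalities — which is also why it is convenient to peel off loops and coloops at the outset, so that all the relevant restrictions are genuinely loopless and coloopless. Once (B) is available, the incompatibility in the forward direction and the inherited compatibility in the converse both follow directly from the product structure of $\operatorname{aff}P(M)\cap[0,1]^n$.
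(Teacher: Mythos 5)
Your proof is correct and follows essentially the same route as the paper: reduce to the disconnected case, invoke Lemma~\ref{lem:disconnected}, use the product structure of $P(M)$ and of $\operatorname{aff}P(M)\cap[0,1]^n$ to produce a common relatively interior point of two split hyperplanes coming from two non-uniform components, and conversely locate every split flacet inside the single non-uniform component so that compatibility is inherited factor-wise. Your facts (A) and (B) simply make explicit what the paper asserts in passing (``these split flacets exist if and only if neither $M|C_1$ nor $M|C_2$ is uniform'' and the transfer of flacets between $M$ and $M|C$ carried out in Lemma~\ref{lem:disconnected}), so the added detail is welcome but not a different argument.
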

\begin{proof}
   We only need to discuss the case that $M$ is disconnected.
   First assume that $M$ is a direct sum of uniform matroids and at most one non-uniform split matroid $M|C$.
   Let $F$ and $G$ be a split flacets of $M$.
   By assumption the $F$-hyperplane does not separate the matroid polytope of any of the uniform matroids.
   Hence $F$ is a flacet of $M|C$.
   Similarly is $G$ a flacet of the split matroid $M|C$.
   In particular, the intersection of the $F$-hyperplane with the $G$-hyperplane restricted to $P(M|C)$ contains no interior point of $P(M|C)$.
   This implies that the intersection of the $F$-hyperplane with the $G$-hyperplane contains no interior point of $P(M)=P(M|C)\times P(M/C)$.

   Now assume that $M$ is a disconnected split $(d,n)$-matroid.
   From Lemma~\ref{lem:disconnected} we know that each connected component is a split matroid.
   Let $C_1$, $C_2$ be two connected components of $M$, and let $F,G$ be a split flacets of $C_1$ and $C_2$, respectively.
   These split flacets exist if and only if neither $M|C_1$ nor $M|C_2$ is uniform.
   Let $x_F\in P(M|C_1)$ be a point on the relative interior of the facet defined by $\sum_{i\in F} x_i = \rank(F)$.
   Similarly, let $x_G\in P(M|C_2)$ be a point on the relative interior of facet defined by $G$.
   Finally, let $x_H$ be a point in the relative interior of $P(M/(C_1+C_2))$.

   We have seen in Lemma~\ref{lem:disconnected} that the $F$-hyperplane is a facet of $P(M)$.
   Hence is $F$ a flacet of $M$, and $G$ is similar.
   By construction the point $(x_F,x_G,x_H)\in P(M|C_1)\times P(M|C_2)\times P(M/(C_1+C_2))$ lies in the interior of $P(M)$ as well as on the $F$- and $G$-hyperplanes.
   We conclude that the flacets $F$ and $G$ are incompatible.
   Since this cannot happen in a split matroid, we may conclude that either $M|C_1$ or $M|C_2$ are uniform.
\end{proof}

\begin{example}\label{example:direct_sum}
  For instance, the direct sum of the $(2,4)$-matroid with five bases, which is a split matroid, with an isomorphic copy is not a split matroid.
\end{example}

\begin{example} \label{example:compatible_splits}
  The $12$-, $34$- and the $56$-hyperplanes, corresponding to the split flacets of the snowflake matroid $\snow$ from Example~\ref{example:snowflake-matroid} are pairwise compatible.
  For instance, we have $\size(\{1,2\}\cap\{3,4\})=0\leq 1+1-2$.
  This shows that the snowflake matroid is a split matroid; see also Figure~\ref{subfig:snow} below.
  Note that the direct sum of the snowflake matroid with a coloop $U_{1,1}$ is a split matroid, too.
  In particular, the $12$- and $34$-hyperplanes do not intersect in the interior of $\Delta(2,6)\times\Delta(1,1)$.
  However, they do intersect in the interior of $\Delta(3,7)$, as $\size(\{1,2\}\cap\{3,4\})=0 > 1+1-3$ shows.
\end{example}

\begin{example}\label{example:non_split_matroid}
  For a different kind of example consider the $(3,6)$-matroid with the eight non-bases $134$, $234$, $345$, $346$, $156$, $256$, $356$ and $456$.
  This matroid has exactly the two flacets $34$ and $56$. The $34$- and the $56$-hyperplanes are not compatible.
  Hence this is not a split matroid.
\end{example}

A rank-$d$ matroid whose circuits have either $d$ or $d+1$ elements is a \emph{paving} matroid.
It is conjectured that asymptotically almost all matroids are paving; see \cite[Conjecture 15.5.10]{Oxley:2011} and \cite[Conjecture 1.6]{MayhewNewmanWelshWhittle:2011}.
A paving matroid whose dual is also paving is called \emph{sparse paving}.
It is known that a matroid is paving if and only if there is no minor isomorphic to the direct sum of the uniform matroid $U_{2,2}$ and $U_{0,1}$; see \cite[page 126]{Oxley:2011}.
The following is a geometric characterization of the paving matroids.

\begin{theorem}\label{thm:paving_matroids}
  Suppose that the $(d,n)$-matroid $M$ is connected.
  Then $M$ is paving if and only if it is a split matroid such that each split flacet has rank $d-1$.
\end{theorem}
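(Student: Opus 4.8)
The plan is to prove both implications from the intrinsic description of split matroids in Theorem~\ref{thm:split-matroid} together with the compatibility inequality of Proposition~\ref{prop:compatible_splits}. A uniform matroid has no split flacets and is trivially both paving and a split matroid, so I may assume throughout that $M$ is not uniform; since $M$ is connected with $n\ge 2$ it then has neither loops nor coloops, and in particular $d\ge 1$ and $n\ge d+1$, so that $[n]-A$ has at least two elements for every $(d-1)$-subset $A$.

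\emph{First suppose that $M$ is paving, and let $F$ be a split flacet.} By Lemma~\ref{lem:flat-split} and Proposition~\ref{prop:cyclic_flat}, $F$ is a proper cyclic flat with $\#F\ge 2$, hence $1\le\rank(F)\le d-1$. The restriction $M|F$ is connected with at least two elements, so it contains a circuit $C\subseteq F$, which is then a circuit of $M$; since $M$ is paving, $\#C\ge d$, so $\rank(F)\ge\rank(C)=\#C-1\ge d-1$ and therefore $\rank(F)=d-1$. Now the contraction $M/F$ has rank $1$ on $[n]-F$ and is loopless, because an element outside a flat is never in its closure; thus $M/F=U_{1,\,n-\#F}$. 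And since $M$ is paving, every $(d-1)$-element subset of $F$ is independent, so $M|F=U_{d-1,\,\#F}$. Both minors being uniform, Theorem~\ref{thm:split-matroid} tells us that $M$ is a split matroid, and we have shown along the way that each of its split flacets has rank $d-1$.

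\emph{For the converse, suppose that $M$ is a split matroid all of whose split flacets have rank $d-1$.} Combining the exterior description \eqref{eq:matroid-exterior} (which provides the valid inequalities $\sum_{i\in F}x_i\le\rank(F)=d-1$ coming from the split flacets $F$) with Proposition~\ref{prop:mp-facets} (which accounts for the remaining facets of $P(M)$ via hypersimplex facets), one obtains
\[
  P(M) \ = \ \SetOf{x\in\Delta(d,n)}{\sum_{i\in F}x_i\le d-1\ \text{for every split flacet}\ F}\enspace .
\]
Evaluating this at $0/1$-vectors shows that a $d$-subset $B\subseteq[n]$ is a basis of $M$ if and only if $B\not\subseteq F$ for every split flacet $F$. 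I now claim that every $(d-1)$-subset $A\subseteq[n]$ extends to a basis; since $\#A<d$ this is equivalent to $A$ being independent, and it forces every circuit to have at least $d$ elements, which means that $M$ is paving. Assume, for a contradiction, that $A$ does not extend to a basis. Then for each $e\in[n]-A$ the $d$-set $A+e$ is not a basis, so $A+e\subseteq F_e$ for some split flacet $F_e$, necessarily of rank $d-1$. If all of these $F_e$ were equal to one flat $F$, we would obtain $[n]=A\cup\bigcup_{e\notin A}\{e\}\subseteq F$, contradicting that $F$ is a proper flat. Otherwise $F_e\ne F_f$ for some $e\ne f$; since $M$ is a split matroid, $F_e$ and $F_f$ are compatible, so Proposition~\ref{prop:compatible_splits} gives $\#(F_e\cap F_f)\le\rank(F_e)+\rank(F_f)-d=d-2$, while $A\subseteq F_e\cap F_f$ forces $\#(F_e\cap F_f)\ge d-1$, a contradiction. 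This proves the claim, and with it the theorem.

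The one step deserving care is the displayed description of $P(M)$: one has to observe that an inclusion-minimal flat defining a non-hypersimplex facet of $P(M)$ necessarily has at least two elements, hence is a split flacet, so that Proposition~\ref{prop:mp-facets} really exhausts all facets. Granting this, the argument reduces to a short computation with ranks and a single use of the compatibility inequality.
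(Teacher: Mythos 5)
Your proof is correct. The forward direction is essentially the paper's argument (split flacet $\Rightarrow$ cyclic flat containing a circuit of size $\ge d$, hence rank $d-1$; uniform restriction and contraction; conclude via Theorem~\ref{thm:split-matroid}), just with the rank computation made explicit where the paper merely asserts that a split flacet is a corank-one flat. The converse, however, runs along a genuinely different track. The paper handles short circuits in two cases: for subsets inside a split flacet $F$ it uses that $P_M(F)=\Delta(d-1,F)\times\Delta(1,[n]-F)$, i.e.\ the uniformity of $M|F$ coming from the split hypothesis via Lemma~\ref{lem:flat-split}/Theorem~\ref{thm:split-matroid}; for subsets in no split flacet it uses the vertex argument with $\bar x=e_{C+D}$. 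You instead upgrade that same vertex argument to the clean characterization ``a $d$-set is a basis iff it lies in no split flacet'' (legitimate, by Proposition~\ref{prop:mp-facets} together with the uniqueness, for connected $M$, of the flat supporting a given non-hypersimplex facet -- the subtlety you rightly flag), and then dispose of a non-extendable $(d-1)$-set $A$ by playing two distinct flacets containing $A$ against the numerical compatibility bound $\size(F\cap G)\le\rank(F)+\rank(G)-d=d-2$ of Proposition~\ref{prop:compatible_splits}, with properness handling the case of a single flacet. What this buys is a converse that never invokes Theorem~\ref{thm:split-matroid} or the product structure of the flacet faces, making the role of pairwise compatibility explicit and purely arithmetic; the paper's route, by contrast, directly exhibits $P_M(F)$ as a product of hypersimplices, a fact it reuses elsewhere (e.g.\ Remark~\ref{rem:partition} and Corollary~\ref{cor:sparse-paving}). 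Both arguments rest on the same underlying facts, so the difference is one of organization rather than depth, but your version is a valid and slightly more self-contained alternative.
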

\begin{proof}
  Let $M$ be paving, and let $F$ be a split flacet.
  Then $F$ is a corank-1 flat of $M$, i.e., $F$ is a proper flat of maximal rank $d-1$.
  Since there are no circuits with fewer than $d$ elements, the restriction $M|F$ is a uniform matroid of rank $d-1$.
  The contraction $M/F$ is a loop-free matroid of rank~$1$, and thus uniform.
  By Theorem~\ref{thm:split-matroid} we find that $M$ is a split matroid, and each split flacet of $M$ has rank $d-1$.

  Conversely, let $M$ be a matroid such that the split flacets correspond to a compatible system of splits of $\Delta(d,n)$ such that, moreover, each split flacet is of rank $d-1$.
  Let $F$ be such a split flacet.
  Then, by Lemma~\ref{lem:flat-split} we have $P_M(F)=\Delta(d-1,F)\times\Delta(1,[n]-F)$.
  It follows that the restriction $M|F$ does not have a circuit with fewer than $d$ elements.

   Now consider a set $C$ of size $d-1$ or less which is contained in no split flacet, and let $D\subseteq[n]-C$ be some set of size $d-\#C$ in the complement of $C$.
  Let $\bar x=e_{C+D}$.
  Then, for any flacet $F$, we have
  \[
  \sum_{i\in F} \bar x_i \ = \ \sum_{i\in F\cap C} \bar x_i + \sum_{i\in F\cap D} \bar x_i \ < \ \#C + d - \#C \ = \ d \enspace .
  \]
  as $C$ is not contained in $F$.
  This shows that $\bar x$ satisfies the flacet inequality $\sum_{i\in F} x_i\leq d-1$.
  Further, the inequalities imposed by the hypersimplex facets also hold, and so $\bar x$ is contained in $P(M)$.
  Since $\bar x=e_{C+D}$ is a vertex of $\Delta(d,n)$ it follows that it must also be a vertex of the subpolytope $P(M)$.
  Therefore, $C+D$ is a basis of $M$, whence $C$ is an independent set.
  We conclude that $M$ does not have any circuit with fewer than $d$ elements.
  Any circuit of a rank-$d$ matroid with more than $d$ elements has exactly $d+1$ elements.
  This is why $M$ is a paving matroid.
\end{proof}


\begin{remark}
  Each split flacet of a paving matroid $M$ corresponds to a partition matroid, and the split flacets are precisely the corank-1 flats of $M$ that contain a circuit; see also Remark~\ref{rem:partition}.
  In this way, the split flacets of a paving matroid implicitly occur in the matroid literature, e.g., in the proof of \cite[Prop. 2.1.24]{Oxley:2011}.
\end{remark}

We want to look into a construction which yields very many split matroids.
Let $\sigma$ be some $d$-element subset of $[n]$.
That is, $\sigma$ is a basis of the uniform matroid $U_{d,n}$, and $e_\sigma=\sum_{i\in\sigma}e_i$ is a vertex of $\Delta(d,n)$.
It neighbors in the Johnson graph $J(d,n)$ lie on the $(\sigma,d-1)$-hyperplane in $\Delta(d,n)$.
More precisely, from \eqref{eq:product} we can see that the convex hull of the neighbors of $e_\sigma$ equals
\[
\Delta(d-1,\sigma)\times\Delta(1,[n]-\sigma) \enspace ,
\]
which is the product of a $(d-1)$-simplex and an $(n-d-1)$-simplex.
The resulting split is called the \emph{vertex split} with respect to $\sigma$ or $e_\sigma$.
Two vertex splits are compatible if and only if the two vertices do not span an edge.
In this way the compatible systems of vertex splits of $\Delta(d,n)$ bijectively correspond to the stable sets in the Johnson graph $J(d,n)$.
The following observation is similar to \cite[Lemma 8]{BansalPendavinghVanDerPol:2012}.
\begin{corollary}\label{cor:sparse-paving}
  Again let $M$ be a $(d,n)$-matroid which is connected.
  Then $M$ is sparse paving if and only if the conclusion of Theorem~\ref{thm:paving_matroids} holds and additionally the splits are vertex splits.
\end{corollary}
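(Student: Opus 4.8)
The plan is to combine Theorem~\ref{thm:paving_matroids} with matroid duality. Recall that $M$ is sparse paving exactly when both $M$ and $M^*$ are paving, that $M$ is connected if and only if $M^*$ is, and that every circuit of a matroid $N$ has at most $\rank(N)+1$ elements, so $N$ is paving if and only if every circuit of $N$ has at least $\rank(N)$ elements. The circuits of $M^*$ are the cocircuits of $M$, that is, the complements $[n]-H$ of the hyperplanes (corank-$1$ flats) $H$ of $M$, and such a complement has $n-\#H$ elements; hence $M^*$ is paving if and only if every hyperplane of $M$ has at most $d$ elements. Granting Theorem~\ref{thm:paving_matroids}, it therefore suffices to prove that, for a connected paving matroid $M$, the condition ``every hyperplane of $M$ has at most $d$ elements'' is equivalent to ``every split flacet of $M$ is a vertex split''.

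First I would identify the split flacets of a connected paving matroid $M$. By Theorem~\ref{thm:paving_matroids} each split flacet has rank $d-1$ and so is a hyperplane. Conversely, if a hyperplane $H$ of $M$ contains a circuit, then — since $M$ is paving and $M|H$ has rank $d-1$ — every circuit of $M|H$ has exactly $d$ elements, so every $(d-1)$-subset of $H$ is independent and $M|H=U_{d-1,\#H}$; this is connected because $\#H\ge d$, and $M/H=U_{1,\,n-\#H}$ is connected as well ($M/H$ is loopfree because $H$ is a flat, and $\#([n]-H)\ge 2$ because $M$, being connected with $n\ge2$, has no coloop). So, as already noted in the remark following Theorem~\ref{thm:paving_matroids}, the split flacets of $M$ are precisely its dependent hyperplanes, while every other hyperplane of $M$ is independent and has exactly $d-1$ elements. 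For a split flacet $F$, Lemma~\ref{lem:flat-split} (applied with $\rank(F)=d-1$) gives that the $F$-hyperplane meets $\Delta(d,n)$ in $\Delta(d-1,F)\times\Delta(1,[n]-F)$ and that this product is the facet $P_M(F)$; in particular $P(M)$ lies in the halfspace $\sum_{i\in F}x_i\le d-1$.

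The geometric heart is the claim that, for a split flacet $F$ of a connected paving matroid, the split given by the $F$-hyperplane is a vertex split of $\Delta(d,n)$ with $P(M)$ in its large cell if and only if $\#F=d$. The cell $\{x\in\Delta(d,n):\sum_{i\in F}x_i\le d-1\}$ containing $P(M)$ is the convex hull of all vertices $e_\rho$ of $\Delta(d,n)$ with $\#(\rho\cap F)\le d-1$, i.e.\ it is $\Delta(d,n)$ with the $\binom{\#F}{d}$ vertices $e_\rho$ for $\rho\subseteq F$ removed. This is the large cell of a vertex split exactly when only one vertex is removed, that is, when $\#F=d$; then the removed vertex $e_F$ is a non-basis of $M$ (as $F$ is a dependent $d$-set), and the shared codimension-one cell $\Delta(d-1,F)\times\Delta(1,[n]-F)$ is the convex hull of the neighbours of $e_F$ in $J(d,n)$, so the split is the vertex split at $\sigma=F$. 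Combining the last two paragraphs: all split flacets of $M$ are vertex splits $\iff$ every dependent hyperplane of $M$ has exactly $d$ elements $\iff$ every hyperplane of $M$ has at most $d$ elements $\iff$ $M^*$ is paving. With Theorem~\ref{thm:paving_matroids} this yields the corollary, and the last part of the claim also shows that, when the condition holds, the split flacets of $M$ form a compatible system of vertex splits in accordance with the bijection with stable sets of $J(d,n)$.

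The step I expect to be the main obstacle is this last claim, and in particular the bookkeeping of which of the two cells of the split hyperplane the matroid polytope occupies. A corank-$1$ flat with more than $d$ elements can still give a hyperplane that is abstractly a vertex split of $\Delta(d,n)$ (this happens already in rank $2$), but then $P(M)$ sits in the small, apex cell and the cut-off vertex is a basis rather than a non-basis of $M$ — this is the degenerate behaviour that must be excluded, and it is excluded precisely by $\#F=d$. Matching the polyhedral notion of a vertex split with the combinatorial notion of a circuit-hyperplane of size exactly $d$ is thus the crux; once it is in place, the remaining arguments are routine manipulations with ranks, coloops, and the product formula of Lemma~\ref{lem:flat-split}.
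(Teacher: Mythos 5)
Your proof is correct, and it arrives at the paper's actual criterion (every split flacet has exactly $d$ elements) by a genuinely different route. The paper dualizes the polytope via $x_i\mapsto 1-x_i$, notes that the split flacets of $M^*$ are the complements of those of $M$, computes $M^*|([n]-F)=(M/F)^*$ and $M^*/([n]-F)=(M|F)^*$, and then applies Theorem~\ref{thm:paving_matroids} to $M^*$ to conclude that $M^*$ is paving if and only if each split flacet $F$ of $M$ has cardinality $d$; the identification of this condition with ``the splits are vertex splits'' is left implicit. You instead test pavingness of $M^*$ through its circuits (the cocircuits of $M$, i.e.\ complements of hyperplanes), identify the split flacets of a connected paving matroid with its dependent hyperplanes, and prove the polyhedral equivalence ``the $F$-split is the vertex split cutting off a non-basis vertex iff $\#F=d$'' explicitly, so you never need the dual-flacet statement nor the theorem applied to $M^*$. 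What your route buys is exactly the point you flag at the end: read literally, ``the splits are vertex splits'' is weaker than $\#F=d$, since a split flacet with $\#F=n-d$ (for instance a parallel class $\{1,2,3\}$ in a connected rank-$2$ matroid on $5$ elements) induces a hyperplane that is abstractly the vertex split at the basis $[n]-F$, with $P(M)$ sitting in the apex cell; such a matroid satisfies the conclusion of Theorem~\ref{thm:paving_matroids} and its split is a vertex split, yet it is not sparse paving. Your insistence that the cut-off vertex be a non-basis (equivalently that $P(M)$ lie in the large cell, equivalently $\#F=d$) is precisely the reading under which the corollary holds, and it is the condition the paper's computation in fact establishes. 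In short: the paper's argument is shorter because it reuses Theorem~\ref{thm:paving_matroids} on the dual, while yours is more self-contained on the polyhedral side and makes the statement's meaning precise.
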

\begin{proof}
   For each rank $d-1$ split flacet $F$ of  the split matroid $M$ we have $M/F = U_{1,[n]-F}$ and $M|F = U_{d-1,F}$.
   The dual of $M$ is a matroid of rank $n-d$ on $n$ elements. The matroid polytope $P(M^*)$ is the image of $P(M)$ under coordinate-wise transformation $x_i\mapsto 1-x_i$.
   It follows that the split flacets of $M^*$ are the complements of the split flacets of $M$.
   Thus, for the split flacet $[n]-F$ in $M^*$, we obtain
  \begin{align*}
    M^*|([n]-F) \ &= \ (M/F)^* \ = \ U^*_{1,[n]-F} \ = \ U_{ n-\# F-1,[n]-F} \quad \text{and}\\
    M^*/([n]-F) \ &= \ (M|F)^* \ = \ U^*_{d-1,F} \ = \ U_{\# F-d+1,F} \enspace .
  \end{align*}
  This implies that $M^*$ is paving if and only if each split flacet $F$ has cardinality $d$.
\end{proof}

The following two examples illustrate the differences between paving and split matroids.
The class of split matroids is strictly larger.
In contrast to the class of paving matroids the class of split matroids is closed under dualization.
\begin{example}
  The $(\{1,2,3,4\},2)$-hyperplane yields a split of the hypersimplex $\Delta(4,8)$.
  The two maximal cells correspond to split matroids which are not paving nor are their duals.
\end{example}

Yet there are still plenty of matroids which are not split.
\begin{example}\label{example:non_split_nested}
  Up to symmetry there are 15 connected matroids of rank three on six elements.
  Among these there are exactly four which are non split.
  One such example is the nested matroid given by the columns of the matrix
  \[
  \begin{pmatrix}
    1 & 1 & 0 & 1 & 0 & \lambda \\
    0 & 0 & 1 & 1 & 0 & 1 \\
    0 & 0 & 0 & 0 & 1 & 1
  \end{pmatrix} \enspace .
  \]
  where $\lambda\neq0,1$.
  This matroid is realizable over any field with more than two elements.
\end{example}

Knuth gave the following construction for stable sets in Johnson graphs \cite{Knuth:1974}.
Due to Corollary~\ref{cor:sparse-paving} this is the same as a compatible set of vertex splits, which arise from the split flacets of a sparse paving matroid.
\begin{example}\label{example:knuth}
  The function
  \[
  (x_1,\ldots,x_n)\mapsto\sum_{i=1}^n i\cdot x_i \mod n
  \]
  defines a proper coloring of the nodes of $J(d,n)$ with $n$ colors.
  Each color class forms a stable set, and there must be at least one stable set of size at least $\frac{1}{n}\tbinom{n}{d}$.
\end{example}
For special choices of $d$ and $n$ larger stable sets in $J(d,n)$ are known.
\begin{example}
  Identifying a natural number between $0$ and $2^k-1$ with its binary representation yields a $0/1$-vector of length $k$.
  All quadruples of such vectors that sum up to $0$ modulo $2$ form a stable set $S$ in $J(4,2^k)$ of size $n(n-1)(n-2)/24$, where $n=2^k$.
  Fixing one vector and restricting to those quadruples in $S$ which contain that vector gives a stable set of size $(n-1)(n-2)/6$ in $J(3,2^k-1)$.
  The latter construction also occurs in \cite[Theorem 3.1]{Dukes:2004} and \cite[Theorem 3.6]{HerrmannJensenJoswigSturmfels:2009}.
\end{example}

In a way, the sparse paving matroids are those split matroids which are the easiest to get at.
We sum up our discussion in the following characterization.
\begin{theorem}
  The following sets are in bijection with one another:
  \begin{enumerate}
  \item[(i)] The split flacets of sparse paving connected matroids of rank $d$ on $n$ elements,
  \item[(ii)] the cyclic flats of sparse paving connected matroids of rank $d$ on $n$ elements,
  \item[(iii)] the sets of compatible vertex splits of $\Delta(d,n)$,
  \item[(iv)] the stable sets of the graph $J(d,n)$,
  \item[(v)] the sets of binary vectors of length $n$ with constant weight $d$ and Hamming distance at least $4$.
  \end{enumerate}
\end{theorem}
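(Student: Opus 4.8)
The plan is to establish a chain of bijections running from (v) to (iv) to (iii) to (i) to (ii), where we tacitly restrict to the range $2\le d\le n-2$ in which $\Delta(d,n)$ admits splits and $J(d,n)$ is not complete. The step from (v) to (iv) is a reformulation: a binary vector of length $n$ and weight $d$ is the indicator vector of a node of the Johnson graph $J(d,n)$, two weight-$d$ vectors differ in an even number of coordinates, and they are adjacent in $J(d,n)$ exactly when that number equals $2$; hence a family of such vectors has all pairwise Hamming distances at least $4$ if and only if it is a stable set of $J(d,n)$. The step from (iv) to (iii) is the observation recorded just before Corollary~\ref{cor:sparse-paving}: the vertex split with respect to $\sigma$ is encoded by the node $e_\sigma$, and two vertex splits are compatible precisely when their nodes are non-adjacent, so a compatible system of vertex splits is the same datum as a stable set of $J(d,n)$.

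The step linking (iii) and (i) is where matroid theory enters. If $M$ is a connected sparse paving $(d,n)$-matroid, then by Theorem~\ref{thm:paving_matroids} and Corollary~\ref{cor:sparse-paving} each of its split flacets $F$ has rank $d-1$, has exactly $d$ elements, and induces a vertex split via the $F$-hyperplane; as $M$ is a split matroid these vertex splits are pairwise compatible, and since the node of the split at $F$ is $e_F$, the set of split flacets of $M$ and the compatible system of vertex splits determine one another. For the converse, given a compatible system of vertex splits, let $\mathcal{C}$ be its set of nodes, a stable set of $J(d,n)$, and let $M_{\mathcal{C}}$ be the set system on $[n]$ whose bases are the $d$-subsets of $[n]$ not in $\mathcal{C}$. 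I would first check that $M_{\mathcal{C}}$ satisfies the basis exchange axiom: for bases $B_1,B_2$ and $x\in B_1\setminus B_2$, among the candidates $B_1-x+y$ with $y\in B_2\setminus B_1$ at most one can fail to be a basis, since two failures would be two members of $\mathcal{C}$ at Hamming distance $2$, while if $\size(B_2\setminus B_1)=1$ the exchange is already a basis; thus $M_{\mathcal{C}}$ is a rank-$d$ matroid. The same distance argument shows that every $(d-1)$-subset is independent, so $M_{\mathcal{C}}$ is paving; passing to the dual replaces $\mathcal{C}$ by the set of complements, which is again a stable set (now of $J(n-d,n)$), so $M_{\mathcal{C}}^{*}$ is paving too and $M_{\mathcal{C}}$ is sparse paving. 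Moreover $M_{\mathcal{C}}$ has no loops or coloops, so each of its connected components is loop- and coloop-free and paving, hence of rank at least $d-1$; this forces a single component once $d\ge 3$, and the only disconnected case left is the degenerate $\Delta(2,4)$. Finally each $\sigma\in\mathcal{C}$ is a $d$-element circuit of $M_{\mathcal{C}}$ which is also a flat (for $e\notin\sigma$, some $(d-1)$-subset of $\sigma$ together with $e$ is a basis, again by the distance condition), hence a split flacet whose hyperplane is the prescribed vertex split; conversely Theorem~\ref{thm:paving_matroids} makes every split flacet of $M_{\mathcal{C}}$ a rank-$(d-1)$ flat, hence a $d$-element circuit, hence a non-basis, hence a member of $\mathcal{C}$. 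So $\mathcal{C}$ is precisely the set of split flacets of $M_{\mathcal{C}}$, and the two maps between (i) and (iii) are mutually inverse.

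Finally I would relate (i) and (ii) by showing that the cyclic flats of a connected sparse paving matroid are exactly its split flacets together with $\emptyset$ and $[n]$. The sets $\emptyset$ and $[n]$ are cyclic flats of every loop-free connected matroid on at least two elements, and neither of them equals a split flacet, which has $d$ elements with $0<d<n$. By Proposition~\ref{prop:cyclic_flat} every split flacet is a cyclic flat. Conversely, if $F$ is a cyclic flat with $0<\rank(F)<d$ it contains a circuit $C$, and since circuits have $d$ or $d+1$ elements while $\rank(F)\le d-1$, necessarily $C$ has $d$ elements and rank $d-1$; then $C$ is a rank-$(d-1)$ flat contained in the rank-$(d-1)$ flat $F$, so $F=C$ is a split flacet. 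Hence $\mathcal{S}\mapsto\mathcal{S}\cup\{\emptyset,[n]\}$ is the required bijection from the sets in (i) to those in (ii). The main obstacle is the realizability argument inside the link between (iii) and (i): verifying that $M_{\mathcal{C}}$ is a connected, sparse paving matroid having $\mathcal{C}$ as its complete set of split flacets, together with the bookkeeping needed to handle the low-dimensional degeneracies---most notably $\Delta(2,4)$, where two distinct nodes of $J(d,n)$ can induce the very same split.
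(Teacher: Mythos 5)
Your proposal is correct and follows essentially the same route as the paper: the chain (v)$\leftrightarrow$(iv)$\leftrightarrow$(iii) via the observation that two vertex splits are compatible exactly when their vertices are non-adjacent in $J(d,n)$, the link (iii)$\leftrightarrow$(i) via Theorem~\ref{thm:paving_matroids} and Corollary~\ref{cor:sparse-paving}, and (i)$\leftrightarrow$(ii) via Proposition~\ref{prop:cyclic_flat} together with the fact that the cyclic flats of a connected sparse paving matroid are its split flacets plus $\emptyset$ and $[n]$. The only real difference is one of detail rather than method: you spell out the inverse construction $\mathcal{C}\mapsto M_{\mathcal{C}}$ (basis exchange, sparse pavingness, connectedness, identification of the split flacets) and the boundary caveats ($2\le d\le n-2$ and the degenerate $\Delta(2,4)$, where antipodal vertices induce the same split), which the paper compresses into the bare citation of Corollary~\ref{cor:sparse-paving}.
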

\begin{proof}
  Each split flacet of a sparse paving matroid $M$ is a cyclic flat by Proposition~\ref{prop:cyclic_flat}. 
  The proof of Theorem~\ref{thm:paving_matroids} shows that cyclic flats of rank $d-1$ are split flacets of $M$.
  Further, the cyclic flats of a connected paving matroid are those of rank $d-1$, the empty set and the entire ground set $[n]$.
  This establishes that (i) and (ii) are equivalent.

  Corollary~\ref{cor:sparse-paving} is exactly the equivalence of (i) and (iii).

  By Proposition~\ref{prop:compatible_splits} two vertex splits of $\Delta(d,n)$ are compatible if and only if the two vertices do not span an edge.
  The compatible systems of vertex splits of $\Delta(d, n)$ bijectively correspond to the stable sets in the vertex-edge graph of $\Delta(d,n)$, which is the Johnson graph $J(d, n)$.
  This means that (iii) is equivalent to (iv).

  The vertices of the hypersimplex $\Delta(d, n)$ are all binary vectors of length $n$ with constant weight $d$.
  The Hamming distances of two such vectors $v$ and $w$ is the number of coordinates where $v_i\neq w_i$.
  This number is twice the distance of the vertices in the Johnson graph $J(d,n)$.
  Note that odd numbers do not occur as Hamming distances.
  Hamming distance at least $4$ means that the vertices are not adjacent in $J(d,n)$.
  This yields the equivalence of (iv) and (v).
\end{proof}

A table with lower bounds on the maximal size of such a set for $n \leq 28$ is given in \cite[Table I-A]{Brouweretal:1990}.
Notice that this data also gives lower bounds on the total number of $(d,n)$-matroids; see, e.g., \cite{BansalPendavinghVanDerPol:2012}.

\section{Matroid subdivisions and tropical linear spaces}\label{sec:Dr}
\noindent
In this section we want to exploit the structural information that we gathered about split matroids to derive new results about tropical linear spaces, the tropical Grassmannians and the related Dressians \cite{SpeyerSturmfels:2004,HerrmannJensenJoswigSturmfels:2009}.
We begin with some basics on general polyhedral subdivisions; see \cite{LoeraRambauSantos:2010} for further details.

Let $P$ be some polytope.
A polytopal subdivision of $P$ is \emph{regular} if it is induced by a lifting function on the vertices of $P$.
Examples are given by the Delaunay subdivisions where the lifting function is the Euclidean norm squared.
The lifting functions on $P$ which induce the same polytopal subdivision, $\Sigma$, form a relatively open polyhedral cone, the \emph{secondary cone} of $\Sigma$.
The \emph{secondary fan} of $P$ comprises all secondary cones.
The inclusion relation on the closures of the secondary cones of $P$ imposes a partial ordering, and this is dual to the set of regular polytopal subdivisions of $P$ partially ordered by refinement.
The secondary fan has a non-trivial lineality space which accounts for the various choices of affine bases.
Usually we will ignore these linealities.
In particular, whenever we talk about dimensions we refer to the dimension of a secondary fan modulo its linealities.

A \emph{tropical Pl\"ucker vector} $\pi\in\RR^{n \choose d}$ is a lifting function on the vertices of the hypersimplex $\Delta(d,n)$ such that the regular subdivision induced by $\pi$ is a \emph{matroid subdivision}, i.e., each of its cells is a matroid polytope. The cells of the dual of a matroid subdivision that correspond to loop-free matroid polytopes form a subcomplex.
This subcomplex of that matroid subdivision is the \emph{tropical linear space} defined by $\pi$.
The \emph{Dressian} $\Dr(d,n)$ is the subfan of the secondary fan of the hypersimplex $\Delta(d,n)$ comprising the tropical Pl\"ucker vectors.
According to Remark \ref{rem:splits-of-matroid-polytopes} each split of a hypersimplex is a regular matroid subdivision and hence it defines a ray of the corresponding Dressian.

Let $M$ be a $(d,n)$-matroid.
The matroid polytope $P(M)$ is a subpolytope of $\Delta(d,n)$.
Restricting the tropical Pl\"ucker vectors to vertices of $P(M)$ and looking at regular subdivisions of $P(M)$ into matroid polytopes gives rise to the \emph{Dressian} $\Dr(M)$ of the matroid $M$; see \cite[Section~6]{HerrmannJensenJoswigSturmfels:2009}.
The rank of any subset $S$ of $[n]$ coincides with the rank of the flat spanned by $S$.
Restricting the rank function of $M$ to all subsets of $[n]$ of a fixed cardinality $k$ yields the $k$-\emph{rank vector} of $M$.
The \emph{dual-rank function} of $M$ is the rank function of $M^*$, the dual matroid of $M$, and the \emph{corank function} is the difference between $d$ and the rank function.
The $k$-\emph{corank vector} of $M$ is the map
\[
\rho_k(M) \,:\, \binom{[n]}{k} \to \NN \,, \ S\mapsto d-\rank_M(S) \enspace .
\]
The regular subdivision of $\Delta(k,n)$ with lifting function $\rho_k(M)$ is the $k$-\emph{corank subdivision} induced by the matroid $M$.
Usually we will omit the size $k$ in those definitions if $k$ equals $d$.
The following known result says that the $k$-corank subdivision is a matroid subdivision.
\begin{lemma}\label{lem:corank-cell}
  The $k$-corank vector $\rho_k(M)$ of the $(d,n)$-matroid $M$ is a $(k,n)$-tropical Pl\"ucker vector.
  Moreover, the matroid polytope $P(M)$ occurs as a cell in the $k$-corank subdivision induced by $M$.
  That cell is maximal if and only if $M$ is connected.
\end{lemma}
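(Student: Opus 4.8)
The plan is to treat the lemma's two assertions separately, the first being the substantive one. For the first --- that $\rho_k(M)$ is a $(k,n)$-tropical Pl\"ucker vector, i.e.\ that it induces a matroid subdivision of $\Delta(k,n)$ --- I would use the standard characterization of matroid subdivisions by the three-term tropical Pl\"ucker relations: a weight vector $\pi$ on $\binom{[n]}{k}$ induces a matroid subdivision if and only if, for every $(k-2)$-element set $S\subseteq[n]$ and all distinct $a,b,c,d\in[n]-S$, the minimum of the three numbers $\pi(S\cup\{a,b\})+\pi(S\cup\{c,d\})$, $\pi(S\cup\{a,c\})+\pi(S\cup\{b,d\})$, $\pi(S\cup\{a,d\})+\pi(S\cup\{b,c\})$ is attained at least twice; see \cite{Speyer:2005,HerrmannJensenJoswigSturmfels:2009}. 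Substituting $\pi=\rho_k(M)$, so that $\pi(T)=d-\rank_M(T)$, and using the rank formula $\rank_M(S\cup X)=\rank_M(S)+\rank_{M/S}(X)$ for the contraction, the relation for a fixed $S$ becomes equivalent to the assertion that the \emph{maximum} of $\rank_N(\{a,b\})+\rank_N(\{c,d\})$, $\rank_N(\{a,c\})+\rank_N(\{b,d\})$, and $\rank_N(\{a,d\})+\rank_N(\{b,c\})$ is attained at least twice, where $N$ is the restriction of $M/S$ to the four-element set $\{a,b,c,d\}$ (restricting a matroid leaves the ranks of subsets of the new ground set unchanged).

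So the whole first assertion reduces to an elementary fact about an \emph{arbitrary} matroid $N$ on a four-element set $\{a,b,c,d\}$: among the three perfect matchings of this set, the largest value of $\rank_N(\text{first pair})+\rank_N(\text{second pair})$ occurs for at least two of them. This is the step I expect to be the real (if minor) point of the proof. I would prove it by a short case analysis organized by the loops and the parallelism classes of $N$: the rank of a pair $\{x,y\}$ is $0$ exactly when both are loops, is $1$ exactly when precisely one is a loop or the two are non-loops in a common parallelism class, and is $2$ otherwise. Running through the finitely many possibilities for the loop set and the partition of the non-loops into parallelism classes, one finds that the maximum matching sum is always attained by at least two matchings; the configurations worth spelling out are those containing a parallel pair or a parallel triple. (Alternatively, this is the classical statement that the corank function of a matroid is a valuated matroid, so the first assertion also follows from \cite{DressWenzel:1992}; I would mention this but give the self-contained argument.)

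For the second assertion I would argue straight from the definition of a regular subdivision, which gives directly that $\conv\SetOf{e_T}{T\in\binom{[n]}{k},\ \rho_k(M)(e_T)\text{ minimal}}$ is a cell of the subdivision induced by $\rho_k(M)$ (the cell selected by the zero lifting direction). Since $\rho_k(M)(e_T)=d-\rank_M(T)$ and, for $k=d$, the minimum of $d-\rank_M(T)$ over $d$-subsets is $0$, attained precisely on the bases of $M$, this cell equals $P(M)$. (For general $k\le d$ the minimum equals $d-k$, attained on the independent $k$-subsets, so the cell is the matroid polytope of the rank-$k$ truncation of $M$, which is $P(M)$ when $k=d$.) Finally, the cell is a maximal cell of the subdivision if and only if it is full-dimensional, i.e.\ of dimension $n-1$; by Proposition~\ref{prop:matroid-polytope}(i) this happens exactly when the underlying matroid --- $M$ itself in the case $k=d$ --- is connected, and for general $k$ one checks via submodularity that, since $n\ge 2$, the rank-$k$ truncation of $M$ is connected if and only if $M$ is. This completes the plan.
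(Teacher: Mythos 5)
Your proposal is correct in substance, but it takes a genuinely different route from the paper. The paper's proof is essentially a citation: it invokes \cite[Proposition 4.5.5]{Speyer:2005} for the fact that $\rho_k(M)$ is a tropical Pl\"ucker vector with $P(M)$ occurring as a cell, and then reads off maximality from Proposition~\ref{prop:matroid-polytope}(i), exactly as you do in your final step. You instead give a self-contained argument: you reduce the three-term tropical Pl\"ucker relations for the corank lifting, via $\rank_M(S\cup X)=\rank_M(S)+\rank_{M/S}(X)$, to the elementary claim that in any matroid on four elements the maximal matching sum of pair ranks is attained at least twice, and you verify that by a loop/parallel-class case analysis (which does go through; all configurations check out). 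For the cell statement you argue directly from the definition of a regular subdivision that the face of minimal lifting height projects to $\conv\SetOf{e_T}{\rho_k(M)(T)\text{ minimal}}$, which for $k=d$ is precisely $P(M)$ since the minimum $0$ is attained exactly on the bases. This amounts to re-deriving the Dress--Wenzel fact that corank functions are valuated matroids, which you correctly note as an alternative citation; what it buys is an elementary, self-contained proof, at the cost of leaning on the equivalence between the three-term relations and matroidal subdivisions, itself a nontrivial result of \cite{Speyer:2005} and \cite{HerrmannJensenJoswigSturmfels:2009}, so the gain in self-containedness is only partial.

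Two caveats. First, the cell and maximality assertions really concern $k=d$: for $k\neq d$ the polytope $P(M)\subseteq\Delta(d,n)$ cannot literally be a cell of a subdivision of $\Delta(k,n)$ (compare the paper's example following the lemma, where $\rho_3(M)$ and $\rho_1(M)$ induce trivial subdivisions). Your reading, which identifies the minimal cell as the truncation polytope for $k<d$ and as $P(M)$ for $k=d$, is the sensible one. Second, your parenthetical claim that the rank-$k$ truncation of $M$ is connected if and only if $M$ is connected is false: truncating $U_{1,2}\oplus U_{1,2}$ to rank $1$ yields the connected matroid $U_{1,4}$. This does not damage the lemma as it is used (only $k=d$ matters, and your $k=d$ argument is complete), but the general-$k$ version of the maximality statement would in fact fail, so that remark should be dropped rather than ``checked via submodularity.''
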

\begin{proof}
  Speyer showed that  $\rho_k(M)$ is a tropical Pl\"ucker vector such that the matroid polytope $P(M)$ occurs as a cell \cite[Proposition 4.5.5]{Speyer:2005}.
  The dimension of that cell can be read off from Proposition~\ref{prop:matroid-polytope}.
\end{proof}

\begin{example}
  With $d=2$ and $n=4$ let $M$ be the matroid with the five bases $12$, $13$, $14$, $23$ and $24$.
  We pick $k=d=2$.
  The rank of the unique non-basis $34$ equals $1$, whence $\rho_2(M)=(0,0,0,0,0,1)$.
  The matroid subdivision induced by $\rho_2(M)$ splits the hypersimplex $\Delta(2,4)$ into two Egyptian pyramids.
  Every subset of $\{1,2,3,4\}$ with cardinality $k=3$ contains a basis, and thus $\rho_3(M)=(0,0,0,0)$.
  There are no loops in $M$, whence for $k=1$ the corank vector $\rho_1(M)$ equals $(1,1,1,1)$.
  Here and below the ordering of the $k$-subsets of $[n]$ in the corank vectors is lexicographic.
\end{example}

\begin{example}\label{example:snowflake-tree}
  The corank  subdivision of the matroid $\snow$ in Example~\ref{example:snowflake-matroid} is a matroid subdivision of $\Delta(2,6)$ whose tropical linear space is the snowflake tree.
  Hence the name snowflake matroid for $\snow$.
  See Figure~\ref{subfig:snow} for a visualization.
\end{example}

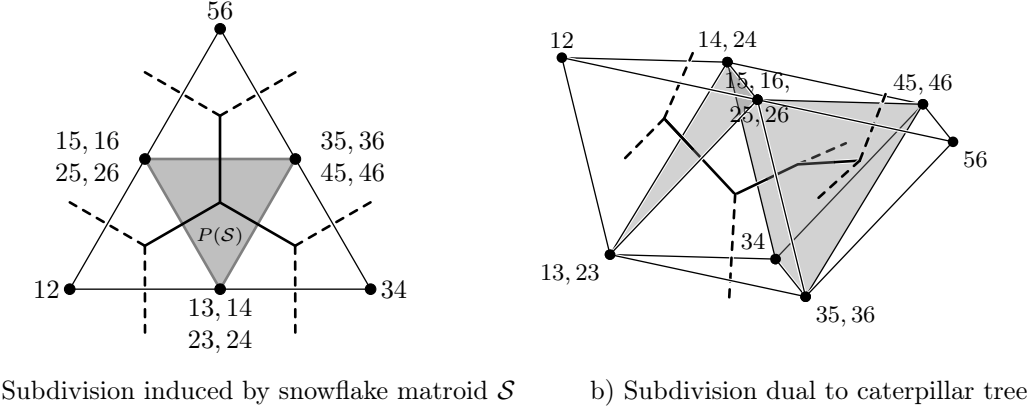
\begin{figure}
   \begin{subfigure}[c]{0.49\textwidth}
   \begin{tikzpicture}[scale =2, font=\small] 
   \draw[white] (-0.7,-0.5) -- (2.5,2);
   
  \coordinate (a0) at (0, 0);
  \coordinate (a2) at (2,0);
  \coordinate (c) at (1,1.732);
   \coordinate (a1) at ($(a0)!0.5!(a2)$);
   \coordinate (b0) at ($(a0)!0.5!(c)$);
   \coordinate (b1) at ($(a2)!0.5!(c)$);
   \coordinate (t0) at ($0.333*(a0)+0.333*(c)+0.333*(a2)$);
   \coordinate (t1) at ($0.333*(a0)+0.333*(b0)+0.333*(a1)$);
   \coordinate (t2) at ($0.333*(a1)+0.333*(b1)+0.333*(a2)$);
   \coordinate (t3) at ($0.333*(b0)+0.333*(b1)+0.333*(c)$);

   \coordinate (o1) at ($0.333*(a0)+(b0)-0.333*(b1)$);
   \coordinate (o2) at ($0.333*(a0)+(a1)-0.333*(b1)$);
   \coordinate (o3) at ($0.333*(a2)+(a1)-0.333*(b0)$);
   \coordinate (o4) at ($0.333*(a2)+(b1)-0.333*(b0)$);
   \coordinate (o5) at ($0.333*(c)+(b0)-0.333*(a1)$);
   \coordinate (o6) at ($0.333*(c)+(b1)-0.333*(a1)$);

 \tikzstyle{hypersimplex} = [fill=none, fill opacity=0.85, draw=black, line width=0.5 pt, line cap=round, line join=round]
 \tikzstyle{matroid} = [fill=gray, fill opacity=0.5, draw=gray, line width=1 pt, line cap=round, line join=round]
 \tikzstyle{tree} = [fill=none, draw=black, line width=1 pt, line cap=round, line join=round]
 \tikzstyle{vertex} = [fill=black]

   \draw[hypersimplex] (a0) -- (a2) -- (c) -- cycle;
   \draw[matroid] (a1) -- (b0) -- (b1) -- cycle;
   \draw[tree] (t1) -- (t0) -- (t2);
   \draw[tree] (t0) -- (t3);

   \draw[tree, dashed] (t1) -- (o1);
   \draw[tree, dashed] (t1) -- (o2);
   \draw[tree, dashed] (t2) -- (o3);
   \draw[tree, dashed] (t2) -- (o4);
   \draw[tree, dashed] (t3) -- (o5);
   \draw[tree, dashed] (t3) -- (o6);

   \draw[vertex] (a0) circle (1pt);
 \draw[vertex] (a1) circle (1pt);
 \draw[vertex] (a2) circle (1pt);
 \draw[vertex] (b0) circle (1pt);
 \draw[vertex] (b1) circle (1pt);
 \draw[vertex] (c) circle (1pt);

   \node[below] at ($(t0)-(0,0.1)$) {\tiny $P(\snow)$};
   \node[left] at (a0) {$12$};
   \node[right] at (a2) {$34$};
   \node[above] at (c) {$56$};

   \node[below, align=center] at (a1) {$13$,\,$14$\\ $23$,\,$24$};
   \node[right, align=center] at ($(b1)+(0.1,0)$) {$35$,\,$36$\\ $45$,\,$46$};
   \node[left, align=center] at ($(b0)-(0.1,0)$) {$15$,\,$16$\\ $25$,\,$26$};

\end{tikzpicture}
   \subcaption{Subdivision induced by snowflake matroid $\snow$}
   \label{subfig:snow}
   \end{subfigure}
   \begin{subfigure}[c]{0.49\textwidth}

\begin{tikzpicture}[x  = {(-0.71cm,0.67cm)},
                    y  = {(-0.39cm,-0.64cm)},
                    z  = {(0.59cm,0.39cm)},
                    scale = 2,
                    ]
                    
   \draw[white] (1.05,2.19) -- (-0.45,-3.28);

   \tikzstyle{pre} = [preaction={draw=white, line cap=round, line join=round, line width=1.3 pt}]
   \tikzstyle{hypersimplex} = [pre, fill=none, fill opacity=0.85, draw=black, line width=0.5 pt, line cap=round, line join=round]
   \tikzstyle{cell} = [pre, fill=gray, fill opacity=0.35, draw=black, line width=0.5 pt, line cap=round, line join=round]
   \tikzstyle{tree} = [preaction={draw=white, line cap=butt, line join=round, line width=1.7 pt}, fill=none, draw=black, line width=1 pt, line cap=round, line join=round]
   \tikzstyle{tree2} = [draw=black, line width=1 pt, line cap=round, line join=round]
   \tikzstyle{tree3} = [tree, dashed, style={shorten >=-0.4cm}]

   \tikzstyle{tree3b} = [preaction={draw=gray!35!white, line cap=butt, line join=round, line width=1.7 pt}, fill=none, draw=black, line width=1 pt, line cap=round, line join=round, dashed ]

   \tikzstyle{vertex} = [fill=black]

  \coordinate (v0__3) at (1, -1, 0);
  \coordinate (v1__3) at (1, 0, 1);
  \coordinate (v2__3) at (0, 0, 0);
  \coordinate (v3__3) at (0, 1, 1);
  \coordinate (v4__3) at (0, -1, 1);
  \coordinate (t1) at ($0.2*(v0__3)+0.2*(v1__3)+0.2*(v2__3)+0.2*(v3__3)+0.2*(v4__3)$);
  \coordinate (v0__1) at (1, 1, 0);
  \coordinate (v1__1) at (1, -1, 0);
  \coordinate (v2__1) at (1, 0, 1);
  \coordinate (v3__1) at (0, 0, 0);
  \coordinate (v4__1) at (0, 1, 1);
  \coordinate (t2) at ($0.2*(v0__1)+0.2*(v1__1)+0.2*(v2__1)+0.2*(v3__1)+0.2*(v4__1)$);
  \coordinate (v0__2) at (1, 0, 1);
  \coordinate (v1__2) at (0, 1, 1);
  \coordinate (v2__2) at (0, -1, 1);
  \coordinate (v3__2) at (0, 0, 2);
  \coordinate (t0) at ($0.25*(v0__2)+0.25*(v1__2)+0.25*(v2__2)+0.25*(v3__2)$);
  \coordinate (v0__4) at (1, 1, 0);
  \coordinate (v1__4) at (1, -1, 0);
  \coordinate (v2__4) at (1, 0, 1);
  \coordinate (v3__4) at (2, 0, 0);
  \coordinate (t3) at ($0.25*(v0__4)+0.25*(v1__4)+0.25*(v2__4)+0.25*(v3__4)$);

  \draw[tree3]  (t1) -- ($0.33*(v0__3)+0.33*(v3__3)+0.33*(v4__3)$);
  \draw[hypersimplex] (v4__3) -- (v0__3) -- (v1__3) -- (v4__3) -- cycle;
  \draw[hypersimplex] (v3__3) -- (v2__3) -- (v4__3) -- (v3__3) -- cycle;
  \draw[hypersimplex] (v4__3) -- (v2__3) -- (v0__3) -- (v4__3) -- cycle;
  \draw[tree]  ($(t2)!0.5!(t1)$) -- (t1) -- ($(t1)!0.6!(t0)$);
  \fill[black] (v4__3) circle (1 pt);
  \fill[black] (v0__3) circle (1 pt);
  \fill[black] (v2__3) circle (1 pt);
  \fill[black] (v3__3) circle (1 pt);
  \fill[black] (v1__3) circle (1 pt);

  \draw[hypersimplex] (v3__1) -- (v0__1) -- (v1__1) -- (v3__1) -- cycle;
  \draw[cell] (v3__1) -- (v1__1) -- (v2__1) -- (v4__1) -- (v3__1) -- cycle;
  \draw[hypersimplex] (v3__1) -- (v4__1) -- (v0__1) -- (v3__1) -- cycle;
  \draw[tree]  ($(t1)!0.5!(t2)$) -- (t2) -- ($(t2)!0.6!(t3)$);
  \draw[tree3]  (t2) -- ($0.33*(v0__1)+0.33*(v3__1)+0.33*(v4__1)$);
  \fill[black] (v3__1) circle (1 pt);
  \fill[black] (v1__1) circle (1 pt);
  \draw[hypersimplex] (v2__1) -- (v0__1) -- (v4__1) -- (v2__1) -- cycle;
  \fill[black] (v2__1) circle (1 pt);
  \fill[black] (v0__1) circle (1 pt);
  \fill[black] (v4__1) circle (1 pt);
  
  \draw[hypersimplex] (v0__2) -- (v3__2) -- (v2__2) -- (v0__2) -- cycle;
  \draw[hypersimplex] (v2__2) -- (v3__2) -- (v1__2) -- (v2__2) -- cycle;
  \draw[cell] (v1__2) -- (v0__2) -- (v2__2) -- (v1__2) -- cycle;
  \draw[tree]  ($(t1)!0.6!(t0)$) -- (t0);
  \draw[tree3b,  style={shorten >=-0.4cm}]  (t0) -- ($0.33*(v0__2)+0.33*(v1__2)+0.33*(v3__2)$);
  \fill[black] (v2__2) circle (1 pt);
  \draw[tree3]  (t0) -- ($0.33*(v0__2)+0.33*(v2__2)+0.33*(v3__2)$);
  \draw[tree3b, style={shorten >=-0.19cm}]  (t0) -- ($0.33*(v0__2)+0.33*(v2__2)+0.33*(v3__2)$);
  \draw[hypersimplex] (v1__2) -- (v3__2) -- (v0__2) -- (v1__2) -- cycle;
  \fill[black] (v1__2) circle (1 pt);
  \fill[black] (v3__2) circle (1 pt);
  \fill[black] (v0__2) circle (1 pt);

  \draw[cell] (v1__4) -- (v2__4) -- (v0__4) -- cycle;
  \draw[hypersimplex] (v0__4) -- (v3__4) -- (v1__4) -- (v0__4) -- cycle;
  \draw[tree]  ($(t2)!0.6!(t3)$) -- (t3);
  \draw[hypersimplex] (v3__4) -- (v2__4) -- (v1__4) -- (v3__4) -- cycle;
  \draw[tree3]  (t3) -- ($0.33*(v1__4)+0.33*(v2__4)+0.33*(v3__4)$);
  \draw[hypersimplex] (v3__4) -- (v2__4);
  \fill[black] (v1__4) circle (1 pt);
   \draw[tree3]  (t3) -- ($0.33*(v0__4)+0.33*(v2__4)+0.33*(v3__4)$);
  \draw[hypersimplex] (v0__4) -- (v2__4) -- (v3__4) -- (v0__4) -- cycle;
  \draw[tree2]  ($(t2)!0.6!(t3)$) -- (t3);
  \fill[black] (v0__4) circle (1 pt);
  \fill[black] (v2__4) circle (1 pt);
  \fill[black] (v3__4) circle (1 pt);

   \node[below right] at (v4__1) {\footnotesize $35$,\,$36$};
   \node[above left] at (v3__1) {\footnotesize $34$};
   \node[below right] at (v3__2) {\footnotesize $56$};
   \node[above] at (v2__2) {\footnotesize $45$,\,$46$};
   \node[above] at (v3__4) {\footnotesize $12$};
   \node[align=center] at (v2__4) {\footnotesize $15$,\,$16$,\\ \footnotesize \,$25$,\,$26$};
   \node[above] at (v1__4) {\footnotesize $14$,\,$24$};
   \node[below left] at (v0__4) {\footnotesize $13$,\,$23$};

\end{tikzpicture}
   \subcaption{Subdivision dual to caterpillar tree}
   \label{subfig:caterpillar}
   \end{subfigure}
  \caption{Two subdivisions of $\Delta(2,6)$ and their tropical linear spaces}
  \label{fig:corank}
\end{figure}

By Proposition~\ref{prop:mp-facets} the facets of any matroid polytope are either hypersimplex facets or induced by hypersimplex splits.
In the following we will be interested in the set of hypersimplex splits arising from the split flacets of a given matroid.
The next result explains what happens if that matroid is a split matroid.
\begin{proposition} \label{prop:prod_uniform}
  Let $M$ be a split $(d,n)$-matroid which is connected.
  Then the corank vector $\rho(M)$ is contained in the relative interior of a simplicial cone of $\Dr(d,n)$, and the dimension of that cone is given by the number of split flacets of $M$.
  In particular, $\rho(M)$ is a ray if and only if it induces a split of $\Delta(d,n)$.
  This is the case if and only if $M$ is a nested matroid with exactly three cyclic flats.
\end{proposition}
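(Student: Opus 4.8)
The plan is to locate $\rho(M)$ inside the simplicial cone spanned by the split lifting functions of the split flacets. Write $F_1,\dots,F_k$ for the split flacets of $M$ and set $r_i=\rank(F_i)$. By Lemma~\ref{lem:flat-split} and equation~\eqref{eq:inhomogeneous} the $F_i$-hyperplane has the form $\sum_{j\in F_i}x_j=r_i$, with $P(M)$ lying on the side $\sum_{j\in F_i}x_j\le r_i$, and the associated split of $\Delta(d,n)$ is induced by the lifting function $\pi_i$ on the vertices of $\Delta(d,n)$ given by $\pi_i(\sigma)=\max\bigl(0,\#(\sigma\cap F_i)-r_i\bigr)$. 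Since $M$ is a split matroid these $k$ splits are pairwise compatible, so by the theory of split subdivisions \cite[Section~3]{HerrmannJoswig:2008} the cone $\cone(\pi_1,\dots,\pi_k)$ is simplicial of dimension $k$, its relative interior is the secondary cone of the common refinement $\Sigma'$ of the $k$ splits, and --- as each $\pi_i$ is a tropical Pl\"ucker vector and the splits are compatible --- this cone is contained in $\Dr(d,n)$. By Lemma~\ref{lem:corank-cell} the corank vector $\rho(M)$ is itself a tropical Pl\"ucker vector, and the induced subdivision $\Sigma$ of $\Delta(d,n)$ has $P(M)$ as a maximal cell.

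The heart of the argument is the identity
\[
\rho(M)(\sigma) \ = \ d-\rank_M(\sigma) \ = \ \sum_{i=1}^{k}\max\bigl(0,\#(\sigma\cap F_i)-r_i\bigr) \ = \ \sum_{i=1}^{k}\pi_i(\sigma)
\]
for every $d$-element subset $\sigma\subseteq[n]$. If $\sigma$ is a basis of $M$ both outer sides vanish, since an independent set meets each flat $F_i$ in at most $r_i$ elements; in general both sides record the corank $d-\rank_M(\sigma)$. To prove it I would use that $M|F_i$ and $M/F_i$ are uniform (Theorem~\ref{thm:split-matroid}) to see that the excess $\#(\sigma\cap F_i)-r_i$ of $\sigma$ inside $F_i$ contributes exactly that many units to the corank, and the compatibility inequality $\#(F_i\cap F_j)+d\le r_i+r_j$ of Proposition~\ref{prop:compatible_splits} to see that the contributions of distinct split flacets are independent and are neither over- nor undercounted. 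Equivalently, one checks that on each cell $Q$ of $\Sigma'$ the function $\rho(M)$ agrees with the linear function $\sigma\mapsto\sum_{i\in I_Q}\bigl(\#(\sigma\cap F_i)-r_i\bigr)$, where $I_Q$ indexes the split hyperplanes bounding $Q$ on the side away from $P(M)$; combined with Proposition~\ref{prop:mp-facets} (which makes $P(M)$ a maximal cell of $\Sigma'$, cut out of $\Delta(d,n)$ by the $F_i$-hyperplanes and hypersimplex facets) this forces $\Sigma=\Sigma'$. Granting the identity, $\rho(M)=\sum_{i}1\cdot\pi_i$ is a strictly positive combination of the extreme rays of the simplicial cone, hence lies in its relative interior, whose dimension is $k$, the number of split flacets. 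I expect the verification of this rank identity --- in particular the bookkeeping showing that compatibility prevents double counting --- to be the main obstacle.

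For the remaining assertions, $\rho(M)$ spans a ray of $\Dr(d,n)$ exactly when that simplicial cone is one-dimensional, i.e.\ when $M$ has a unique split flacet $F$; in that case $\Sigma=\Sigma'$ is the split of $\Delta(d,n)$ induced by $F$ and $\rho(M)=\pi_F$, and conversely if $\rho(M)$ induces a split of $\Delta(d,n)$ then $P(M)$ is one of its two maximal cells, so again $M$ has exactly one split flacet. It then remains to identify the connected split matroids with a single split flacet as the nested matroids with exactly three cyclic flats. For a connected matroid with at least two elements the sets $\emptyset$ and $[n]$ are cyclic flats, and by Proposition~\ref{prop:cyclic_flat} together with the argument in the proof of Proposition~\ref{prop:mp-facets} every flacet with at least two elements is a split flacet; conversely, using that $M$ is a split matroid one shows that every proper cyclic flat $Z$ with $\#Z\ge 2$ has connected uniform restriction and contraction, hence is a split flacet --- if $M|Z$ or $M/Z$ decomposed nontrivially, taking $M$-closures of the components would produce a further proper cyclic flat with at least two elements, contradicting uniqueness. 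Thus $M$ has a unique split flacet $F$ if and only if its cyclic flats are precisely $\emptyset\subsetneq F\subsetneq[n]$, i.e.\ $M$ is nested with three cyclic flats. The reverse implication is immediate from Theorem~\ref{thm:split-matroid}: if the cyclic flats of a connected matroid form the chain $\emptyset\subsetneq F\subsetneq[n]$ then $M|F$ and $M/F$ are uniform, so $M$ is a split matroid whose only split flacet is $F$, and $\rho(M)=\pi_F$ induces a split of $\Delta(d,n)$.
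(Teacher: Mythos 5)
Your treatment of the first two assertions follows the same route as the paper, only more explicitly: the paper simply observes that the hypersimplex splits attached to the split flacets are pairwise compatible and hence span a simplicial secondary cone, leaving the decomposition of $\rho(M)$ implicit, whereas you write it out as $\rho(M)=\sum_i\pi_i$ with $\pi_i(\sigma)=\max\bigl(0,\#(\sigma\cap F_i)-r_i\bigr)$. That identity is correct, and the ingredients you name do suffice: by the compatibility inequality of Proposition~\ref{prop:compatible_splits} a $d$-set can have positive excess in at most one split flacet; if it has none it is a basis, because by Proposition~\ref{prop:mp-facets} the only facet inequalities of $P(M)$ besides the hypersimplex facets are the flacet inequalities; and if it has excess $e>0$ in $F_i$, then uniformity of $M|F_i$ and $M/F_i$ (Theorem~\ref{thm:split-matroid}) together with submodularity forces the corank to be exactly $e$. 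So the ``main obstacle'' you flag is routine bookkeeping, and the criterion ``ray $\Leftrightarrow$ exactly one split flacet $\Leftrightarrow$ $\rho(M)$ induces a split'' follows as you say.

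The genuine gap is in the last equivalence, in the direction ``unique split flacet $\Rightarrow$ nested with exactly three cyclic flats''. You reduce it to the claim that every proper cyclic flat $Z$ with $\#Z\ge 2$ is a split flacet, and justify this by saying that if $M|Z$ or $M/Z$ were disconnected, closures of components would give a further proper cyclic flat with at least two elements, ``contradicting uniqueness''. This is circular: the hypothesis is uniqueness of the \emph{split flacet}, not of proper cyclic flats, so exhibiting another proper cyclic flat contradicts nothing unless you already know that all such cyclic flats are split flacets --- which is precisely what you are trying to prove. Moreover the construction breaks down on the contraction side: if $e$ is a coloop of $M/Z$, then $Z+e$ is a flat of $M$ but not a cyclic flat, so closures of components of $M/Z$ need not produce cyclic flats at all. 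The statement you want is true, but it needs an argument such as the paper's: every circuit $C$ with at most $d$ elements has closure equal to the unique split flacet $F$, because every $d$-set containing $C$ is a non-basis and hence violates the single non-hypersimplex facet inequality $\sum_{i\in F}x_i\le\rank(F)$, while the split condition $\#F<n-d+\rank(F)$ from Proposition~\ref{prop:splits_of_the_hypersimplex} allows one to extend $C$ to a $d$-set meeting $F$ in only $\#(C\cap F)$ elements; this forces $\#(C\cap F)>\rank(F)$, hence $C\subseteq F$ and, by uniformity of $M|F$, $\rank(C)=\rank(F)$, i.e.\ $\bar C=F$. Since any nonempty proper cyclic flat has rank at most $d-1$ and is a union of circuits with at most $d$ elements, it then equals $F$, which is what yields ``nested with three cyclic flats''. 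Your argument for the reverse direction (three cyclic flats $\Rightarrow$ unique split flacet, via Theorem~\ref{thm:split-matroid}) agrees with the paper and is fine.
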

\begin{proof}
  Let $H$ be the set of hypersimplex splits corresponding to the split flacets of $M$.
  By definition the splits in $H$ are compatible.
  Since each subset of a compatible set of splits is again compatible it follows that the secondary cone spanned by $H$ is a simplicial cone.
  
  Recall that $M$ is nested if the cyclic flats form a chain. 
  The empty set and $[n]$ are two cyclic flats in any connected matroid.
  Assume that the matroid $M$ is nested with precisely three cyclic flats.
  Then the third cyclic flat $F$ induces the only split, since the restriction $M|F$ and the contraction $M\backslash F$ are uniform matroids.

  Conversely, if the matroid $M$ is split with a unique split flacet $F$, then obviously $\emptyset\subsetneq F\subsetneq [n]$.
  Each circuit $C$ of $M$ with fewer than $d+1$ elements leads to valid inequality of the polytope $P(M)$.
  This inequality separates $P(M)$ from those vertices of the hypersimplex with $x_i=1$ for $i\in C$.
  Hence, the only split flacet $F$ contains the circuit $C$.
  The restriction $M|F$ is a uniform matroid and thus $\rank(C) = \rank(F)$.
  We get that $F$ is the closure of $C$.
  Hence we may conclude that $M$ is nested.
\end{proof}
Our next result generalizes \cite[Thm.~3.6]{HerrmannJensenJoswigSturmfels:2009}, which settled the case $d=3$.
\begin{theorem}\label{thm:dim}
  For the dimension of the Dressian we have
  \[
  \frac{1}{n}\binom{n}{d}-1 \ \leq \ \dim\Dr(d,n) \ \leq \ \binom{n-2}{d-1}-1 \enspace .
  \]
\end{theorem}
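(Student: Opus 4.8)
The plan is to prove the two inequalities separately; the lower bound is a short assembly of results already in the paper, while the upper bound is an induction on $n$ whose crux is a fibre estimate for the matroid‑minor maps on Dressians.

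\emph{Lower bound.} I would invoke Knuth's construction (Example~\ref{example:knuth}), which yields a stable set $\mathcal A$ in the Johnson graph $J(d,n)$ with $\#\mathcal A\ge\frac1n\binom nd$. Via the bijection between stable sets of $J(d,n)$ and compatible systems of vertex splits of $\Delta(d,n)$ (the discussion around Corollary~\ref{cor:sparse-paving} together with Proposition~\ref{prop:compatible_splits}), $\mathcal A$ gives $\#\mathcal A$ pairwise compatible splits of $\Delta(d,n)$. Each hypersimplex split is a matroid subdivision, hence a ray of $\Dr(d,n)$, and since every subfamily of a compatible family of splits is again compatible, these rays span a simplicial cone of $\Dr(d,n)$ of full dimension $\#\mathcal A$ (this is the argument used inside the proof of Proposition~\ref{prop:prod_uniform}; see also \cite{HerrmannJoswig:2008}). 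Hence $\dim\Dr(d,n)\ge\frac1n\binom nd$. The only subtlety is that two distinct vertices of $\Delta(d,n)$ can span the same split; a direct check shows this happens only for $(d,n)=(2,4)$, where $\dim\Dr(2,4)=1$ anyway, so the bound $\frac1n\binom nd-1$ holds in all cases (and the ``$-1$'' also mirrors the upper bound, and makes the degenerate cases $d\in\{0,1,n-1,n\}$ automatic, since then $\Dr(d,n)=\{0\}$).

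\emph{Upper bound.} I would induct on $n$. In the base cases $d=1$ and $d=n-1$ the hypersimplex is a simplex, $\Dr(d,n)$ is a single point, and $\binom{n-2}{d-1}-1=0$. For the inductive step fix the element $n$ and use the two matroid‑minor operations on tropical Pl\"ucker vectors: the contraction $\pi\mapsto\pi/n:=\bigl(\pi_{\sigma\cup n}\bigr)_{\sigma\in\binom{[n-1]}{d-1}}$ and the deletion $\pi\mapsto\pi\setminus n:=\bigl(\pi_\sigma\bigr)_{\sigma\in\binom{[n-1]}{d}}$. Among the three‑term Pl\"ucker relations cutting out the Dressian (see \cite{MaclaganSturmfels:2015}), those whose six indices all contain $n$ are exactly the relations of $\Dr(d-1,n-1)$ and those whose indices all avoid $n$ are exactly the relations of $\Dr(d,n-1)$; hence $\pi/n\in\Dr(d-1,n-1)$ and $\pi\setminus n\in\Dr(d,n-1)$, so together they define a linear map
\[
\psi\colon\Dr(d,n)\longrightarrow\Dr(d-1,n-1)\times\Dr(d,n-1),\qquad\pi\longmapsto(\pi/n,\,\pi\setminus n).
\]
Passing to the quotients by the lineality spaces, $\psi$ descends to a map $\bar\psi$, and the step rests on the claim that every non-empty fibre of $\bar\psi$ has dimension at most $1$. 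Granting it,
\[
\dim\Dr(d,n)\ \le\ \bigl(\dim\Dr(d-1,n-1)+\dim\Dr(d,n-1)\bigr)+1\ =\ \bigl(\tbinom{n-3}{d-2}-1\bigr)+\bigl(\tbinom{n-3}{d-1}-1\bigr)+1\ =\ \tbinom{n-2}{d-1}-1
\]
by Pascal's rule, closing the induction; the case $d=3$ recovers \cite{HerrmannJensenJoswigSturmfels:2009}.

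\emph{Main obstacle.} The fibre bound is the heart of the matter. A fibre of $\bar\psi$ is, up to translation, the set of $\pi\in\Dr(d,n)$ whose two minors are both affine functions, a set of dimension $n-2$ modulo lineality; the relations that must cut it down to dimension at most $1$ are precisely the three‑term relations whose four‑element index set contains $n$ but whose $(d-2)$‑element index set does not. Evaluating such a relation on a $\pi$ whose minors are affine with ``slope vectors'' $\gamma$ for $\pi/n$ and $\gamma'$ for $\pi\setminus n$, the common affine part of the three Pl\"ucker monomials cancels and the relation collapses to the tropical condition that $\min(\delta_a,\delta_b,\delta_c)$ be attained at least twice, where $\delta:=\gamma-\gamma'$ and $\{a,b,c\}$ is the triple in question. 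Requiring this for every triple in $[n-1]$ forces $\delta$ to be constant off at most one coordinate, i.e.\ $\delta\in\operatorname{span}(\mathbf 1,e_{j_0})$ for a single $j_0$, which modulo the lineality direction $\mathbf 1$ is at most one degree of freedom. What remains — and the part I expect to be genuinely technical — is to run the same reduction over an arbitrary fibre, where the minors need not be affine, so that a fixed extra term from the base point enters each reduced relation, and to check that the solution set stays $\le 1$‑dimensional within a single maximal cone of $\Dr(d,n)$.
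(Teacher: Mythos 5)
Your lower bound is exactly the paper's argument: Knuth's stable sets in $J(d,n)$ give at least $\frac1n\binom nd$ pairwise compatible vertex splits, and compatible splits span a simplicial cone of the Dressian of the corresponding dimension (the same reasoning as in Proposition~\ref{prop:prod_uniform}); that part is fine, including your remark about the degenerate coincidence of vertex splits in $\Delta(2,4)$.

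The upper bound, however, is not proved. Your induction stands or falls with the claim that every fibre of $\bar\psi$ has dimension at most one, and you only verify it in the special case where the base point's minors are affine, i.e.\ where the reduced relations really do collapse to ``$\min(\delta_a,\delta_b,\delta_c)$ attained twice''. For a general $\pi^0\in\Dr(d,n)$ the relevant relations (those with $n$ in the four-element set and $S\in\binom{[n-1]}{d-2}$) become ``$\min\bigl(\pi^0_{Sab}+\pi^0_{Scn}+\delta_c,\;\pi^0_{Sac}+\pi^0_{Sbn}+\delta_b,\;\pi^0_{Sbc}+\pi^0_{San}+\delta_a\bigr)$ attained twice'', which is the condition that a shift of $\delta$ restricted to $[n-1]-S$ lies in the tropical linear space (a tree) of a rank-two minor of $\pi^0\setminus n$; you would have to show that the intersection of these tree-cylinders over all $(d{-}2)$-sets $S$ is at most one-dimensional modulo $\mathbf 1$. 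That is a genuine lemma requiring proof (it is plausible, and for $d=2$ it amounts to the fact that re-attaching a leaf to a fixed tree is a one-parameter family, but for $d\ge3$ nothing in your text establishes it), and you yourself flag it as the remaining technical heart. As written, the inductive step is therefore a gap, not a proof. For comparison, the paper's upper bound is much shorter and avoids any fibre analysis: by Speyer's theorem the spread of a matroid subdivision of $\Delta(d,n)$ is at most $\binom{n-2}{d-1}$, and the dimension of a secondary cone is bounded by the number of maximal cells of the subdivision minus one (each additional linearly independent coarsening forces at least one more maximal cell), giving $\dim\Dr(d,n)\le\binom{n-2}{d-1}-1$ directly. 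If you want to keep your deletion--contraction route, you must supply the fibre lemma in full generality; alternatively, note that your route, if completed, would prove the slightly different (cone-wise recursive) bound and only then recover $\binom{n-2}{d-1}-1$ via Pascal, so it buys nothing over Speyer's spread bound unless the fibre lemma itself is of independent interest.
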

\begin{proof}
  Speyer showed that the \emph{spread} of any matroid subdivision of the hypersimplex $\Delta(d,n)$, i.e., its number of maximal cells, does not exceed $\binom{n-2}{d-1}$ \cite[Thm.~3.1]{Speyer:2005}.
  The dimension of a secondary cone of a subdivision $\Sigma$ is the size of a maximal linearly independent family of coarsest subdivisions which are refined by $\Sigma$.
  As each (coarsest) subdivision has at least two maximal cells, the dimension of the secondary cone is at most the spread minus one.
  This follows from the fact that at least $k$ (linearly independent) rays are necessary in order to generate a cone of dimension $k$.
  It follows that $\dim\Dr(d,n) \leq \tbinom{n-2}{d-1}-1$.
  The lower bound is given by Knuth's construction of stable sets in $J(d,n)$; see Example~\ref{example:knuth}.
\end{proof}
This gives the following asymptotic estimates.
\begin{corollary}
  For fixed $d$ the dimension of the Dressian $\Dr(d, n)$ is of order $\Theta(n^{d-1})$.
  Further, the asymptotic dimension of the Dressian $\Dr(d, 2d)$ is bounded from below by $\Omega(4^d d^{-3/2})$ and bounded from above by $O(4^d d^{-1/2})$.
\end{corollary}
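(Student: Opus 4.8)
The plan is to derive both asymptotic statements directly from the two‑sided estimate in Theorem~\ref{thm:dim}, so that the entire argument reduces to elementary asymptotics of binomial coefficients; no new geometry or combinatorics is needed.

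For the first claim I would fix $d$ and regard the two bounds as functions of $n$. The quantity $\binom{n}{d}=\frac{1}{d!}\,n(n-1)\cdots(n-d+1)$ is a polynomial in $n$ of degree $d$ with leading coefficient $1/d!$, so $\frac{1}{n}\binom{n}{d}$ is a polynomial in $n$ of degree $d-1$ with leading coefficient $1/d!$, and subtracting $1$ does not change its order of growth. Likewise $\binom{n-2}{d-1}$ is a polynomial in $n$ of degree $d-1$ with leading coefficient $1/(d-1)!$. Hence both the lower and the upper bound in Theorem~\ref{thm:dim} are of order $\Theta(n^{d-1})$, and therefore so is the sandwiched quantity $\dim\Dr(d,n)$.

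For the second claim I would substitute $n=2d$ into Theorem~\ref{thm:dim}, obtaining
\[
  \frac{1}{2d}\binom{2d}{d}-1 \ \leq \ \dim\Dr(d,2d) \ \leq \ \binom{2d-2}{d-1}-1 \enspace ,
\]
and then invoke Stirling's formula in the form $\binom{2m}{m}=\frac{4^{m}}{\sqrt{\pi m}}\bigl(1+o(1)\bigr)$. Applied with $m=d$ this shows that the lower bound equals $\frac{4^{d}}{2\sqrt{\pi}\,d^{3/2}}\bigl(1+o(1)\bigr)$, which is $\Omega(4^{d}d^{-3/2})$; applied with $m=d-1$, and using $4^{d-1}=\tfrac14\,4^{d}$ together with $\sqrt{d-1}\sim\sqrt{d}$, it shows that the upper bound equals $\frac{4^{d}}{4\sqrt{\pi}\,d^{1/2}}\bigl(1+o(1)\bigr)$, which is $O(4^{d}d^{-1/2})$. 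In both cases the additive $-1$ is absorbed into the error term.

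There is essentially no obstacle: the corollary is an immediate consequence of Theorem~\ref{thm:dim} and standard growth estimates. The only points that require a moment's care are the harmless constant shifts — the $n-2$ versus $n$ inside the binomials and the additive $-1$ in the bounds — none of which affect the stated orders of growth.
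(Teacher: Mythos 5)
Your proposal is correct and follows essentially the same route as the paper: both parts are read off from the two-sided bound in Theorem~\ref{thm:dim}, using that the bounds are polynomials of degree $d-1$ in $n$ for fixed $d$, and Stirling's estimate $\binom{2m}{m}\sim 4^m/\sqrt{\pi m}$ after substituting $n=2d$. The handling of the additive $-1$ and the shift from $n$ to $n-2$ is the same harmless bookkeeping as in the paper's argument.
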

\begin{proof}
  For fixed $d$ the lower and the upper bound in Theorem~\ref{thm:dim} both grow as fast as $n^{d-1}$ asymptotically.
  Stirling's formula yields that the binomial coefficient $2d \choose d$ grows like ${2^{2d}}/{\sqrt{\pi d}}$.
  Specializing the bounds in Theorem~\ref{thm:dim} to $n=2d$ thus yields
  \[
  \Omega\left( \frac{2^{2d-1}}{d\sqrt{\pi d}} \right) \ \leq \ \dim\Dr(d,2d) \ \leq \  O\left( \frac{2^{2d-2}}{\sqrt{\pi (d-1)}} \right) \enspace .
  \]
  Now the lower and the upper bound differ by a multiplicative factor of
  \[
  \frac{d\sqrt{d}}{2\sqrt{d-1}} \enspace ,
  \]
  which tends to $d/2$ when $d$ goes to infinity.
\end{proof}

The following example shows that not all matroid subdivisions are induced by a corank function.
\begin{example}
  The matroid subdivision $\Sigma$ of the hypersimplex $\Delta(2,6)$ induced by the lifting vector $(3,2,1,0,0,2,1,0,0,2,1,1,2,2,3)$ is not a corank subdivision.
  We give a hint how this claim can be verified.
  This subdivision $\Sigma$ has exactly $4$ maximal cells, which come as two pairs of isomorphic cells.
  One can check that $\Sigma$ does not agree with the corank subdivision induced by any of these maximal cells.
  The lifting-vector is obtained from a metric caterpillar tree with six leaves and unit edge lengths, see Figure~\ref{subfig:caterpillar}.
  Notice that the subdivision $\Sigma$ is realizable by a tropical point configuration, while the corank subdivision induced by the snowflake matroid $\snow$ is not;
  see \cite{HerrmannJoswigSpeyer:2012}.
\end{example}

Tropical geometry studies the images under the valuation map of algebraic varieties over fields with a discrete valuation; see, e.g., \cite[Chapter~3]{MaclaganSturmfels:2015}.
Let $\KKt$ be the field of formal Puiseux series over an algebraically closed field $\KK$.
The valuation map $\val:\KKt\to\RR\cup\{\infty\}$ sends a Puiseux series to the exponent of the term of lowest order.
Each $d$-dimensional subspace in the vector space $\KKt^n$ can be written as the column span of a $d{\times}n$-matrix $A$.
The maximal minors of $A$ encode that subspace as a \emph{Pl\"ucker vector}, which is a point on the \emph{Grassmannian} $\Gr_{\KKt}(d,n)$, an algebraic variety over $\KKt$.
Tropicalizing the Pl\"ucker vector of $A$ yields a tropical Pl\"ucker vector, i.e., a point on the Dressian $\Dr(d,n)$.
In fact the set of all tropical Pl\"ucker vectors which arise in this way is the \emph{tropical Grassmannian} $\TGr_{\characteristic \KK}(d,n)$.
The latter is the tropical variety which comes about as the tropicalization of $\Gr_{\KKt}(d,n)$, and this is a $d(n-d)$-dimensional polyhedral fan, which is a proper subset of $\Dr(d,n)$ unless $d=2$ or $(d,n)=(3,6)$; see \cite{SpeyerSturmfels:2004} and \cite{Speyer:2005}.
The precise relationship between the fan structures of $\TGr_{\characteristic \KK}(d,n)$ and $\Dr(d,n)$ is a topic of ongoing research.
Since the Pl\"ucker ideal, which defines $\Gr_{\KKt}(d,n)$, is generated by polynomials with integer coefficients, the tropical variety  $\TGr_{\characteristic \KK}(d,n)$ only depends on the characteristic of the field $\KKt$, which agrees with the characteristic of $\KK$.
The tropical Pl\"ucker vectors that lie in the tropical Grassmannian are called \emph{realizable}.
We also say that such a tropical Pl\"ucker vector can be \emph{lifted} to an ordinary Pl\"ucker vector.
The following was stated in \cite[Example 4.5.4]{Speyer:2005}.
We indicate a short proof for the sake of completeness.
\begin{proposition}\label{prop:realizable}
  Let $\pi$ be a $(d,n)$-tropical Pl\"ucker vector which can be lifted to an ordinary Pl\"ucker vector over $\KKt$.
  Then the cells in the subdivision of $\Delta(d,n)$ induced by $\pi$ necessarily correspond to matroids which are realizable over $\KK$.
\end{proposition}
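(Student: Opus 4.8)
The plan is to realize every cell of the subdivision as the matroid polytope of the column matroid of an explicit matrix over $\KK$, built from the lifting matrix by scaling columns with powers of $t$ and then passing to a suitable integral lattice basis.

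First I would fix notation. Write $\pi=\val(p)$, where $p\in\KKt^{\binom{[n]}{d}}$ is the Pl\"ucker vector of a rank-$d$ matrix $A\in\KKt^{d\times n}$, so that $p_\sigma=\det A_\sigma$ is the maximal minor of $A$ on the column set $\sigma$. All entries of $\pi$ are rational, since valuations of non-zero Puiseux series lie in $\QQ$; hence every cell of the regular subdivision induced by $\pi$ is cut out by a rational weight, and after clearing denominators we may write this cell as $\conv\SetOf{e_\sigma}{\sigma\in S_w}$ for some integral vector $w\in\ZZ^n$, where $S_w$ denotes the set of those $\sigma$ for which $\langle w,e_\sigma\rangle+\pi_\sigma$ is minimal. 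Thus it suffices to show that the matroid $M_w$ on $[n]$ whose bases are the members of $S_w$ is realizable over $\KK$.

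Next comes the heart of the argument. Put $A_w=A\cdot\mathrm{diag}(t^{w_1},\dots,t^{w_n})$, so the maximal minor of $A_w$ on $\sigma$ equals $t^{\langle w,e_\sigma\rangle}p_\sigma$ and hence has valuation $\langle w,e_\sigma\rangle+\pi_\sigma$. Let $R=\SetOf{x\in\KKt}{\val(x)\geq 0}$ be the valuation ring, with maximal ideal $\mathfrak m$ and residue field $R/\mathfrak m=\KK$, and let $V\subseteq\KKt^n$ be the row space of $A_w$. The key point is the lattice lemma: $\Lambda:=V\cap R^n$ is a free $R$-module of rank $d$ and a direct summand of $R^n$, because $R^n/\Lambda$ is torsion free and hence free over the valuation ring $R$, so the inclusion splits. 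Choosing an $R$-basis of $\Lambda$ yields a matrix $B\in R^{d\times n}$ with $B=gA_w$ for some $g\in\mathrm{GL}_d(\KKt)$; consequently $\val(\det B_\sigma)=\val(\det g)+\langle w,e_\sigma\rangle+\pi_\sigma$, which differs from $\langle w,e_\sigma\rangle+\pi_\sigma$ by a constant and in particular is minimal precisely for $\sigma\in S_w$. Since $B$ has entries in $R$ this minimum is non-negative, and since $\Lambda$ is a direct summand the reduction $\bar B\in\KK^{d\times n}$ has rank $d$, so the minimum equals $0$. Therefore $\det\bar B_\sigma\neq 0$ if and only if $\val(\det B_\sigma)=0$ if and only if $\sigma\in S_w$, which says exactly that the column matroid of $\bar B$ is $M_w$. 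Hence $M_w$, and thus the chosen cell, is realizable over $\KK$.

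Since this argument applies to an arbitrary weight $w$, it covers all cells of the subdivision, maximal or not; alternatively one may observe that every cell is a face of a maximal cell, which by Proposition~\ref{prop:matroid-polytope}(\ref{it:matroid-polytope:product}) corresponds to a direct sum of minors of a realizable matroid and is therefore realizable as well. The one step that requires care, and where the substance sits, is the lattice lemma that $V\cap R^n$ is a direct summand of $R^n$ with full-rank reduction attaining minimal valuation on exactly the minors in $S_w$; this is the structure theory of finitely generated modules over the valuation ring $R$, and it can equivalently be phrased in terms of initial forms and Gr\"obner degenerations of the $(d,n)$-Pl\"ucker ideal.
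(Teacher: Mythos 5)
Your proof is correct, and it reaches the same destination as the paper --- a matrix with entries in the valuation ring $R$ whose reduction modulo the maximal ideal realizes the cell's matroid --- but the key step is carried out by a genuinely different device. The paper normalizes $\pi$ so that the chosen cell sits at height zero (which is your column scaling by $t^{w_i}$ in disguise) and then argues completely explicitly: Gaussian elimination with pivots of minimal valuation shows the representing matrix can be arranged to have entries of non-negative valuation with some maximal minor of valuation zero, after which evaluating at $t=0$ finishes the proof. You instead invoke the structure theory of finitely generated modules over the valuation ring: the lattice $\Lambda=V\cap R^n$ is a free direct summand because $R^n/\Lambda$ is finitely generated and torsion-free, hence free, and the splitting makes the reduction of an $R$-basis of $\Lambda$ have full rank. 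This buys a cleaner, coordinate-free argument that treats every weight $w$, and hence every cell, uniformly (the paper only argues about the single cell $P(M)$ after normalization), and it is the standard ``initial degeneration via lattices'' picture; the cost is that you lean on a nontrivial fact about a non-Noetherian ring --- $R$ has value group $\QQ$, so ``finitely generated torsion-free implies free'' rests on $R$ being a valuation (Bézout) domain, not on any Noetherian argument --- which you should cite or prove, whereas the paper's pivoting argument is self-contained. One small repair: ``clearing denominators'' in $w$ does not preserve the selected face, since scaling $w$ without scaling $\pi$ changes which of the quantities $\pi_\sigma+\langle w,e_\sigma\rangle$ attain the minimum. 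The fix is immediate: Puiseux series admit rational exponents, so a rational weight $w$ already gives $t^{w_i}\in\KKt$; alternatively reparametrize $t\mapsto t^{1/N}$, which rescales $\pi$ together with $w$ and leaves the subdivision unchanged.
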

\begin{proof}
  By our assumption there exists an ordinary Pl\"ucker vector $p$ which valuates to $\pi$.
  We can pick a matrix $A\in\KKt^{d\times n}$ such that for each $d$-set $I$ of columns we have $\det A_I = p_I$.
  It follows that $\val(\det A_I) = \pi_I$.
  Note that the matrix $A$ is not unique.

  Let $M$ be the matroid corresponding to a cell.
  Up to a linear transformation we may assume that $\pi$ is non-negative, and we have $\pi_I= 0$ if and only if $I$ is a basis of $M$.
  We will show that $A$ can be chosen such that the valuation of each entry is non negative.

  We apply Gaussian elimination to the $n\geq d$ columns of $A$.
  This way the classical Pl\"ucker vector associated with $A$ is multiplied with a non-zero scalar.
  Thus the tropical Pl\"ucker vector $\pi$ is modified by adding a multiple of the all-ones vector.
  In each step, among the possible pivots pick one whose valuation is minimal.
  Let $\gamma$ be the product of all pivot elements, and let $c\,t^g$ for $c\neq 0$ be the term of lowest order.
  By construction $g=\val(c\,t^g)=\val(\gamma)$ is a lower bound for the valuations of the minors of $A$, which is actually attained.
  Since $\pi$ is non-negative and since $\pi_I= 0$ if $I$ is a basis we conclude that $g=0$.

  Including possibly trivial pivots with $1$ we obtain exactly $d$ pivots, one for each row of $A$.
  Multiplying each row with the inverse of the lowest order term of the corresponding pivot does not change $\pi$.
  The resulting matrix $A'$ is a realization with entries whose valuations are non-negative.
  Hence we can evaluate the matrix $A'\in\KKt^{d\times n}$ at $t=0$.
  This gives us the matrix $B\in\KK^{d\times n}$ with the constant terms of $A'$.
  The matrix $B$ realizes $M$ since $\det B_I = 0$ if and only if the lowest order term of $\det A'_I$ is constant in $t$. 
\end{proof}

Our next goal is to prove a characterization of matroid realizability in terms of tropical Pl\"ucker vectors.
In the proof we will use a standard construction from matroid theory which will also reappear further below.
The \emph{free extension} of the $(d,n)$-matroid $M$ by an element $f\not\in[n]$ is the $(d,n{+}1)$-matroid which arises from $M$ by adding $f$ to the ground set such that it is independent from each $(d{-}1)$-element subset of $[n]$.

\begin{theorem}\label{thm:realizable}
  Let $M$ be a $(d,n)$-matroid.
  The corank vector $\rho(M)$ can be lifted to an ordinary Pl\"ucker vector over $\KKt$ if and only if $M$ is realizable over $\KK$.
\end{theorem}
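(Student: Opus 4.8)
The plan is to prove the two implications separately, using the corank subdivision machinery from Lemma~\ref{lem:corank-cell} together with Proposition~\ref{prop:realizable}. For the ``only if'' direction, assume $\rho(M)$ lifts to an ordinary Pl\"ucker vector over $\KKt$. By Lemma~\ref{lem:corank-cell} the matroid polytope $P(M)$ occurs as a cell of the corank subdivision induced by $\rho(M)$. Then Proposition~\ref{prop:realizable} tells us that every cell of that subdivision corresponds to a matroid realizable over $\KK$; in particular $M$ itself is realizable over $\KK$. This direction is essentially immediate once the two earlier results are in place.

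The substantive direction is the converse: assume $M$ is realizable over $\KK$ and construct a lift of $\rho(M)$ to an ordinary Pl\"ucker vector over $\KKt$. The idea is to realize $M$ by a matrix $B\in\KK^{d\times n}$ and then perturb it to a matrix $A(t)\in\KKt^{d\times n}$ whose entries are of the form (constant term from $B$) plus higher-order terms in $t$, chosen generically, so that $\val(\det A(t)_I)=0$ exactly when $I$ is a basis of $M$ and $\val(\det A(t)_I) = 1$ (or some positive value) otherwise. Concretely: if $I$ is a basis of $M$, then $\det B_I\neq0$, so the lowest-order term of $\det A(t)_I$ is the constant $\det B_I$ and the valuation is $0$. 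If $I$ is not a basis, then $\det B_I=0$, and $\det A(t)_I$ is a polynomial in $t$ with zero constant term; one must show that a sufficiently generic choice of the perturbation makes the coefficient of $t^1$ in $\det A(t)_I$ nonzero simultaneously for all non-bases $I$. This is where the free extension construction mentioned just before the theorem statement is likely to enter: one enlarges the ground set, or uses a generic one-parameter family through the realization space, to control all the minors at once. The key point to make rigorous is that the set of ``bad'' perturbations (those for which some non-basis minor still vanishes to order $\geq2$) is a proper Zariski-closed subset, hence avoidable over the infinite residue field $\KK$ (which is algebraically closed by hypothesis), so a good perturbation exists. The resulting tropical Pl\"ucker vector valuates to $\rho(M)$ up to an additive constant, since $\pi_I = 0$ for bases and $\pi_I=1 = d-\rank_M(I)$ need only be matched up to scaling and translation — here one should check that the corank subdivision depends only on the combinatorial type, or alternatively argue directly that $\rho(M)$ and the constructed vector induce the same subdivision and lie in the same secondary cone, hence both are realizable or neither is.

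I expect the main obstacle to be the simultaneous genericity argument for the $t^1$-coefficients of all non-basis minors: one needs that perturbing the matrix entries by generic first-order terms makes each vanishing minor vanish to order exactly one and not more. A clean way to handle this is to observe that for a fixed non-basis $I$, the coefficient of $t$ in $\det A(t)_I$ is a nonzero polynomial in the perturbation parameters (one must exhibit at least one perturbation making it nonzero — e.g. perturb a single entry so that $I$ becomes a basis of the perturbed constant matrix, which is possible because $I$ spans in the uniform matroid and $M$ realizable implies the realization space is nonempty and the relevant coordinate can be moved), so each defines a proper closed condition; taking a point outside the finite union of these closed sets, which is possible since $\KK$ is infinite, yields the desired $A(t)$. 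The bookkeeping to make this uniform across all $\binom{n}{d}$ minors, and to ensure the perturbation does not accidentally destroy a basis (this cannot happen, since basis minors have nonzero constant term and small perturbations do not change that), is routine but needs to be stated carefully.
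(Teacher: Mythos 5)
Your ``only if'' direction is fine and is exactly the paper's argument (Lemma~\ref{lem:corank-cell} plus Proposition~\ref{prop:realizable}). The converse, however, has a genuine gap, and it is not just a matter of bookkeeping: you aim at the wrong target vector. The corank vector has entries $\rho(M)_I=d-\rank_M(I)$, which is $2$ or more on any $d$-set $I$ with $\rank_M(I)\leq d-2$, so a vector that is $0$ on bases and $1$ (or any single positive value) on all non-bases is in general \emph{not} $\rho(M)$, and the discrepancy cannot be absorbed by ``scaling and translation'': adding multiples of the all-ones vector (or any lineality vector) cannot make equal values on non-bases of different corank match $d-\rank_M(I)$. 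Worse, the $0/1$ indicator of non-bases need not even be a tropical Pl\"ucker vector (take $d=2$, $n=4$ with two loops: the three-term tropical Pl\"ucker minimum is attained only once), so the fallback ``it induces the same subdivision, hence lies in the same secondary cone'' is unavailable — and even if it were, realizability is a property of the point, and the compatibility of the Grassmannian's fan structure with the secondary-fan structure of the Dressian is exactly what is \emph{not} known, as the paper itself emphasizes. Moreover, the genericity claim underpinning your construction is false in the relevant cases: if $\rank_M(I)\leq d-2$, then every first-order term of $\det(B_I+tC_I)$ is a determinant using $d-1$ columns of $B_I$, which are already linearly dependent, so the coefficient of $t^1$ vanishes \emph{identically} in the perturbation parameters; no choice of $C$ makes that minor have valuation $1$. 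The correct target is $\val\det A_I=d-\rank_M(I)$ for every $I$.

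The paper's proof realizes precisely this: it sets $A=B+t\cdot C$ where $C\in\KK^{d\times n}$ is chosen so that $[B\,|\,C]$ realizes the $n$-fold free extension of $M$ (constructed from an element $\alpha\in\KK$ algebraic of high degree over the field $\LL$ generated by the entries of $B$). Expanding $\det A_I$ multilinearly, the terms indexed by independent $S\subseteq I$ have valuation $d-\size S$, the minimum $d-\rank_M(I)$ being attained by the maximal independent subsets of $I$; the crucial point, which replaces your Zariski-genericity step, is to rule out cancellation among the leading coefficients at order $t^{d-\rank_M(I)}$, and this is done by showing these coefficients are linearly independent over $\LL$. A genericity argument in the spirit of yours could be made to work, but only after you redirect it to the order-$t^{d-\rank_M(I)}$ coefficient of each minor (showing that polynomial in the entries of $C$ is not identically zero, e.g.\ via the free-extension realization) rather than the $t^1$ coefficient; as written, your construction cannot produce a lift of $\rho(M)$ for any matroid having a $d$-set of corank at least two.
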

\begin{proof}
  Let $\rho(M)$ be realizable.
  Since $P(M)$ occurs as a cell in the matroid subdivision induced by $\rho(M)$ the matroid $M$ is realizable due to Proposition~\ref{prop:realizable}.

  Conversely, let us assume that the matroid $M$ is realizable and the matrix $B\in\KK^{d\times n}$ is a full rank realization.
  The matrix $B$ has only finitely many entries, and these generate some extension field $\LL$ of the prime field of $\KK$.
  The field $\LL$ may or may not be transcendental, but it is certainly not algebraically closed.
  Hence there exists an element $\alpha\in\KK - \LL$ which is algebraic over $\LL$ of degree at least $n$.
  The vector $B\cdot \transpose{(1,\alpha,\ldots,\alpha^{n-1})}$ is $\LL$-linearly independent of any $d-1$ columns of $B$.
  We infer that even the free extension of $M$ is realizable over $\KK$.
  After altogether $n$ free extensions we obtain a matrix $C\in\KK^{d\times n}$ such that the block column matrix $[B|C]$ is a realization of the $n$-fold free extension of $M$.
  We define $A:=B+t\cdot C$, which is a $d{\times}n$-matrix with coefficients in $\KKt$.

  For any $d$-subset $I$ of $[n]$ and for any subset $S\subseteq I$ we denote by $D(S)\in\KKt^{d\times n}$ the matrix whose $k$-th column is the $k$-th column of $B$ if $k\in S$ and $t$ times the $k$-th column of $C$ otherwise.
  Then
  \[
  \det A_I \ = \ \det(B_I+t\cdot C_I) \ = \ \sum_{S\subseteq I} \det D_I(S) \enspace .
  \]
  Further, by choice of $C$, we have $\det D_I(S)=0$ if and only if $S$ is a dependent set in $M$, and $\val (\det D_I(S)) = d-\size S$ if $S$ is independent.
  For a fixed set $S\subseteq I$ the Puiseux series $\det D_I(S)$ has a term $c(S) t^{g(S)}$ of lowest order, and we have $g(S)=\val (\det D_I(S)) = d-\size S$.
  The field $\KK$ is an $\LL$-vector space, and the set
  \[
  \SetOf{c(S)}{S \text{ independent subset of } I}
  \]
  of leading coefficients is linearly independent over $\LL$.
  This is why we obtain $\val \det A_I = d-\rank(I)$, i.e., cancellation does not occur.
  That is, the ordinary Pl\"ucker vector of the matrix $A$ tropicalizes to $\rho(M)$.
\end{proof}

\goodbreak
\section{Rays of the Dressian}\label{sec:rays}
\noindent
The purpose of this section is to describe a large class of tropical linear spaces, which are tropically rigid, i.e., they correspond to rays of the corresponding Dressian.
Before we can define a  special construction for matroids we first browse through a few standard concepts.

Let $M$ be a connected matroid of rank $d$ with $[n]$ as its set of elements.
The \emph{parallel extension} of $M$ at an element $e\in[n]$ by $s\not\in[n]$ is the $(d,n{+}1)$-matroid whose flats are either flats of $M$ which do not contain $e$ or sets of the form $F+s$, where $F$ is a flat containing~$e$.
Among all connected extensions the parallel extension is the one in which the shortest length of a circuit that contains the added element is minimal.
In fact, that length equals two.
Similarly, the free extension is characterized by the following property:
Any circuit that contains the added element has length $d+1$, and this is the maximal length of such a circuit.

In general a \emph{coextension} of $M$ is the dual of an extension applied to the dual matroid $M^*$.
That is, a coextension of a $(d,n)$-matroid is a $(d{+}1,n{+}1)$-matroid.
Finally, a \emph{series-extension} is a parallel coextension.
\begin{definition}
  The \emph{series-free lift} of $M$, denoted as $\sfLift M$, is the  matroid of rank $d+1$ with $n+2$ elements obtained as the series-extension of $M'$ at $f$ by $s$, where $M'$ is the free extension of $M$ by $f$.
\end{definition}
Note that $\sfLift M$ is connected as $M$ is connected.
In the sequel we want to show that the corank subdivision of $\sfLift M$ yields a ray of the Dressian $\Dr(d+1,n+2)$, whenever $M$ is a $(d,n)$-split matroid.
Let us first determine the rank function and the bases of $\sfLift M$.
We write $fs$ as shorthand for the two-element set $f+s=\{f,s\}$.
\begin{lemma}\label{lem:sfLift}
  The set $B$ of size $d+1$ is a basis in $\sfLift M$ if and only if one of the following conditions hold:
  \begin{enumerate}
  \item $fs\subseteq B$ and $\rank_M(B-fs) = d-1$, or
  \item $f\in B$ and $s\not\in B$ and $\rank_M(B - f) = d$, or
  \item $f\not\in B$ and $s\in B$ and $\rank_M(B - s) = d$.
  \end{enumerate}
  Further, the rank of $S\subseteq [n]+fs$ is given by
  \begin{equation}\label{eq:sfLift:rank}
    \rank_{\sfLift M}(S) \ = \ \min\bigl(\,\rank_M(S-fs)+\size(fs \cap S),\ d+1\,\bigr) \enspace .
  \end{equation}
  The split flacets of $\sfLift M$ are those of $M$ and additionally $[n]$, the ground set of $M$.
\end{lemma}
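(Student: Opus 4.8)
Set $N=\sfLift M$, and let $M'$ be the free extension of $M$ by $f$, so that $N$ is the series extension of $M'$ at $f$ by $s$ (recall $N$ is connected). The plan is to first identify the minors of $N$ that matter. Since a series extension is a parallel extension of the dual matroid, dualized, and since deleting (resp.\ contracting) one element of a two-element parallel class deletes the other (resp.\ makes it a loop), one reads off $N/s=M'$ and $N\setminus s=M\oplus U_{1,\{f\}}$; in particular $N\setminus fs=M$, so $\rank_N$ and $\rank_M$ agree on subsets of $[n]$, and also $N/f\cong M'$ (so $N/fs=M'/f$ is the truncation of $M$). Moreover deletion and contraction at elements of $[n]$ commute with both extensions, except that deletion does \emph{not} commute with the free extension: for $T\subseteq[n]$ one has $N/T=\sfLift(M/T)$, while $N\setminus T$ is the series extension of $M'|(([n]-T)+f)$ at $f$ by $s$, and $M'|(([n]-T)+f)=M|([n]-T)\oplus U_{1,\{f\}}$ whenever $\rank_M([n]-T)<d$.

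The basis characterization follows by taking $B$ of size $d+1$ and splitting on $\#(fs\cap B)$. If $fs\cap B=\emptyset$ then $B\subseteq[n]$ and $\rank_N(B)=\rank_M(B)\le d<d+1$, so $B$ is dependent; hence every basis meets $fs$. If exactly one of $f,s$ lies in $B$, say $f$, then $B$ is a basis of $N\setminus s=M\oplus U_{1,\{f\}}$, i.e.\ $\rank_M(B-f)=d$, which is (ii); (iii) is symmetric. If $fs\subseteq B$, then $B$ is a basis of $N$ iff $B-s$ is a basis of $N/s=M'$ containing $f$, i.e.\ $B-fs$ is independent of size $d-1$, which is (i). Formula \eqref{eq:sfLift:rank} is verified the same way: use $N|[n]=M$ when $fs\cap S=\emptyset$, $N\setminus s=M\oplus U_{1,\{f\}}$ when $\#(fs\cap S)=1$, and $\rank_N(S)=\rank_N(\{f\})+\rank_{N/f}(S-f)=1+\min(\rank_M(S-fs)+1,d)$ when $fs\subseteq S$ (as $N/f$ is again a free extension of $M$); the outer minimum with $d+1$ then gives the stated value in each case.

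For the split flacets, recall such an $F$ satisfies $0<\rank(F)<\#F$, hence $\#F\ge2$. First the two easy inclusions. If $F$ is a split flacet of $M$, then $M|F$ and $M/F$ are connected and $0<\rank_M(F)<\#F$; since $N|F=M|F$, $N/F=\sfLift(M/F)$ is connected ($\sfLift$ of a connected matroid is connected), and $\rank_N(F)=\rank_M(F)$, so $F$ is a split flacet of $N$. And $[n]$ is a proper flat of $N$ (as $\rank_N([n]+f)=d+1>d=\rank_N([n])$), with $N|[n]=M$ connected and $N/[n]$ a loop-free rank-one matroid on $fs$, i.e.\ $U_{1,2}$, which is connected, while $0<d<n$ because a connected matroid on $n\ge2$ elements has $1\le d\le n-1$; so $[n]$ is a split flacet of $N$ too, and it is not a flat of $M$. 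Conversely, let $F$ be a split flacet of $N$. If $F$ meets $fs$ in exactly one element, say $f$, then $N|F=M|(F-f)\oplus U_{1,\{f\}}$ has a coloop and a further element, hence is disconnected; if $fs\subseteq F\subsetneq[n]+fs$, then $\rank_M(F\setminus fs)<d$ (else $\rank_N(F)=d+1$ and $F$ is not a flat) and $N|F=M|(F\setminus fs)\oplus U_{1,\{f\}}\oplus U_{1,\{s\}}$, again disconnected. In both cases $F$ is not a flacet of $N$, so $F\subseteq[n]$.

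It remains to treat $F\subsetneq[n]$, and this is the main obstacle, where the split-matroid hypothesis on $M$ is used. Here $F$ is a proper flat of $M$, $N|F=M|F$ is connected, and $0<\rank_M(F)<\#F$, so it suffices to show $M/F$ is connected; then $F$ is a flacet, hence a split flacet, of $M$. Suppose not. Then $P_M(F)=P(M|F)\times P(M/F)$ is a face of $P(M)$ of codimension at least two, so by the polyhedral characterization of split matroids it lies in the boundary of $\Delta(d,n)$. But the coordinates indexed by $F$ are non-constant on $P_M(F)$, as $P(M|F)$ is full-dimensional in the hypersimplex $\Delta(\rank(F),F)$ with $0<\rank(F)<\#F$; hence the proper face of $\Delta(d,n)$ containing $P_M(F)$ fixes only coordinates outside $F$, forcing $P(M/F)$ into a proper face of $\Delta(d-\rank(F),[n]-F)$ that fixes some coordinate to $0$ or $1$. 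This is impossible, since $M/F$ is loop-free ($F$ being a flat) and coloop-free (a coloop of $M/F$ would be a coloop of the connected matroid $M$). Hence $M/F$ is connected, and, combining all cases, the split flacets of $\sfLift M$ are exactly those of $M$ together with $[n]$. (Note $N/F=\sfLift(M/F)$ alone does \emph{not} give $M/F$ connected, as $\sfLift$ of any loop-free matroid is connected --- the free extension joins its components through $f$ --- so the polyhedral input is genuinely needed.)
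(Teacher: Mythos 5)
Your treatment of the basis description and of the rank formula \eqref{eq:sfLift:rank} is correct and agrees in substance with the paper, which phrases the same computation through circuits of the free and series extensions rather than through the minors $(\sfLift M)/s$ (the free extension of $M$ by $f$) and $(\sfLift M)\setminus s=M\oplus U_{1,\{f\}}$. The real divergence is the last assertion, about split flacets. The paper proves it for an arbitrary connected $M$: after ruling out flacets meeting $fs$ and checking $[n]$, it computes $(\sfLift M)/F=\sfLift(M/F)$ for $F\subseteq[n]$ and asserts that $\sfLift(M/F)$ is connected if and only if $M/F$ is, whence connectivity of restriction and contraction transfers between $M$ and $\sfLift M$. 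You instead assume $M$ is split and, for the converse inclusion, argue polyhedrally (faces of codimension at least two of a split matroid polytope lie in the hypersimplex boundary, while $M/F$ is loop- and coloop-free). Your scepticism about the ``only if'' direction of the paper's connectivity claim is well founded: the free extension of any loop-free matroid of positive rank is connected, since every basis $B$ yields a circuit $B+f$ meeting all components, so $\sfLift(M/F)$ is connected for every proper flat $F$ of $M$, whether or not $M/F$ is. Moreover, the unrestricted statement really can fail: for the graphic matroid $M$ of the graph obtained from a triangle on $u_1,u_2,u_3$ by adding a vertex $x$ adjacent to $u_1,u_2$ and a vertex $y$ adjacent to $u_2,u_3$, the edge set $F$ of the triangle is a flat with $M|F=U_{2,3}$ connected and $M/F\cong U_{1,2}\oplus U_{1,2}$ disconnected, so $F$ is not a flacet of $M$, yet by Proposition~\ref{prop:matroid-polytope}(\ref{it:matroid-polytope:flacet}) it is a split flacet of $\sfLift M$. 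So the extra split hypothesis you impose for the flacet statement is genuinely needed, and it is harmless, because the paper only invokes the complete list of split flacets of $\sfLift M$ when $M$ is split (Lemma~\ref{lem:nested-cell}, Theorem~\ref{thm:rays}). Your polyhedral argument for that case is sound: the coordinates indexed by $F$ are non-constant on $P_M(F)$ because $M|F$ is connected with $0<\rank(F)<\size(F)$, and a coloop of $M/F$ would be a coloop of the connected matroid $M$. In short, your proposal proves the corrected statement by a genuinely different, and more careful, route than the paper's own proof of this step.
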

\begin{proof} 
   Clearly each basis in $\sfLift M$ contains at least $f$ or $s$.
   Conversely, any basis $B$ of $M$ extends to a basis of $\sfLift M$ with either $f$ or $s$.
   A circuit of the free extension $M'$ of $M$ by $f$ that contains $f$ has size $d+1$.
   Hence each circuit of $\sfLift M$ that contains $f$ and $s$ has length $d+2$.
   In particular, this implies that each independent set $B$ in $M$ of size $d-1$ together with $fs$ forms a basis of $\sfLift M$.
   Any set which is dependent over $M$ is also dependent over $\sfLift M$.

   The formula for the rank function is a direct consequence of the description of the bases.
   We see that there is no circuit of length at most $d$, that contains $f$, $s$ or both.
   Proposition~\ref{prop:cyclic_flat} says that there is no flacet that contains $f$ or $s$.
   Contracting the set $[n]$ in $\sfLift M$ yields the uniform matroid of rank $1$ on the two-element set $fs$, and this is connected.
   For $S$ a subset of $[n]+fs$ and any set $F\neq [n]$ that does not contain $fs$ we have
   \begin{align*}
      \rank_{ (\sfLift M) / F }(S) =& \rank_{\sfLift M}(S+F) - \rank_{\sfLift M}(F)\\ =& \min\{ \rank_M(S+F-fs)+\size(fs \cap S),\, d+1\} - \rank_M(F)\\ =& \min\{\rank_{M/F}(S-fs)+\size(fs \cap S),\, d-\rank_M(F)+1\}\\ =& \rank_{ \sfLift (M / F) }(S)
   \end{align*}
   The matroid $\sfLift (M/F) = (\sfLift M) / F$ is connected if and only if $M/F$ is connected.
   The restriction $\sfLift (M|F)$ coincides with $M|F$.
   Both the restriction and contraction on $F$ are connected in $M$ if and only if they are connected in $\sfLift M$.
   We conclude that the split flacets of $\sfLift M$ are precisely the ones in our claim.
\end{proof}

Our next goal is to describe the maximal cells of the corank subdivision induced by $\sfLift M$.
To this end we first define the matroid $\sfLift^* M$ as the free coextension of $M$ by $f$, followed by the parallel extension at $f$ by $s$.
We call $\sfLift^* M$ the \emph{parallel-cofree lift} of $M$.
This new construction is related to the series-free lift by the equality
\[
\sfLift^* M \ = \ \bigl(\,\sfLift(M^*)\,\bigr)^* \enspace .
\]
A direct computation shows that the rank function is given by
\begin{equation}\label{eq:rank-parallel-cofree}
\begin{split}
   \rank_{\sfLift^* M}(S) \ &= \ \min\bigl(\, \rank_{\sfLift M}(S)+\size(fs - S)-1,\,\size S\,\bigr) \\
  &= \  \min\bigl(\,\rank_M(S-fs)+1,\ \size S\,\bigr) \enspace .
\end{split}
\end{equation}

One maximal cell of the corank subdivision induced by $\sfLift M$ is obvious, namely the matroid polytope $P(\sfLift M)$.
This is the case as $M$, and thus also $\sfLift M$, is connected.
Here is another one.
\begin{lemma}\label{lem:sf_subdivision1}
  The corank subdivision of $\sfLift^* M$ coincides with the corank subdivision of $\sfLift M$.
  Hence the matroid polytope $P(\sfLift^* M)$ is a maximal cell of the corank subdivision of $\Delta(d+1,n+2)$ induced by $\sfLift M$.
  Further, the cells $P(\sfLift M)$ and $P(\sfLift^* M)$ intersect in a common cell of codimension one. 
\end{lemma}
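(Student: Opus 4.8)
The plan is to prove the three assertions in turn; the first one is the substantive step and the other two follow quickly from it.

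\emph{The corank subdivisions coincide.} I would show that $\rho(\sfLift M)$ and $\rho(\sfLift^* M)$ differ by an element of the lineality space of the secondary fan of $\Delta(d+1,n+2)$, which forces them to lie in the same relatively open secondary cone and hence to induce the same regular subdivision; both are tropical Pl\"ucker vectors by Lemma~\ref{lem:corank-cell}, so this common subdivision is indeed the common corank subdivision. To compute the difference, fix a $(d{+}1)$-subset $I$ of the ground set $[n]+fs$. Since $\rank_M(I-fs)\le\min\bigl(d,\,\size(I-fs)\bigr)=\min\bigl(d,\,d+1-\size(fs\cap I)\bigr)$, the minimum in \eqref{eq:sfLift:rank} is attained by its first argument, so $\rank_{\sfLift M}(I)=\rank_M(I-fs)+\size(fs\cap I)$; and since $\rank_M(I-fs)\le d$, the minimum in \eqref{eq:rank-parallel-cofree} equals $\rank_M(I-fs)+1$. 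Therefore
\[
  \rho(\sfLift M)(I)-\rho(\sfLift^* M)(I)\ =\ \rank_{\sfLift^* M}(I)-\rank_{\sfLift M}(I)\ =\ 1-\size(fs\cap I)\enspace,
\]
which, read off at the vertex $e_I$, is the restriction of the affine function $x\mapsto 1-x_f-x_s$; a function of the form $I\mapsto c+\sum_{i\in I}a_i$ lies in the lineality space, as desired.

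\emph{$P(\sfLift^* M)$ is a maximal cell.} The matroid $\sfLift M$ is connected (as $M$ is), and so is $\sfLift^* M=(\sfLift(M^*))^*$, since $M^*$ is connected, hence $\sfLift(M^*)$ is, hence its dual is. By Lemma~\ref{lem:corank-cell} the polytope $P(\sfLift^* M)$ is a maximal cell of the corank subdivision induced by $\sfLift^* M$, which by the previous step is the corank subdivision induced by $\sfLift M$.

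\emph{The common cell has codimension one.} I would identify it with the facet $\Phi:=P_{\sfLift M}([n])$ of $P(\sfLift M)$; this is a facet, hence of codimension one in $\Delta(d+1,n+2)$, because $[n]$ is a split flacet of $\sfLift M$ by Lemma~\ref{lem:sfLift}. Its vertices are the bases $B$ of $\sfLift M$ with $\size(B\cap[n])=d$, which by Lemma~\ref{lem:sfLift}(ii)--(iii) are exactly the sets $\beta+f$ and $\beta+s$ with $\beta$ a basis of $M$; evaluating \eqref{eq:rank-parallel-cofree} on a $(d{+}1)$-set shows that such a set is a basis of $\sfLift^* M$ precisely when its intersection with $[n]$ has $M$-rank $d$, which holds here. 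Hence every vertex of $\Phi$ is a vertex of $P(\sfLift^* M)$, so $\Phi\subseteq P(\sfLift M)\cap P(\sfLift^* M)$. On the other hand $P(\sfLift M)\neq P(\sfLift^* M)$: by Lemma~\ref{lem:sfLift}(i) the matroid $\sfLift M$ has a basis $\beta'+fs$ with $\beta'$ an independent set of size $d-1$ in $M$, while no basis of $\sfLift^* M$ contains $fs$. As $P(\sfLift M)$ and $P(\sfLift^* M)$ are two distinct maximal cells of the polytopal corank subdivision, their intersection is a proper face of $P(\sfLift M)$; a proper face containing a facet must equal that facet, so $P(\sfLift M)\cap P(\sfLift^* M)=\Phi$. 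The only step that deserves care is this last one: one should not assume the corank subdivision of $\sfLift M$ is merely the split along the $[n]$-hyperplane, since it is generally finer, which is why I argue through facet containment rather than by exhibiting a split; everything else is bookkeeping with the two rank formulas.
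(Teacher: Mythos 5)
Your proposal is correct and follows essentially the same route as the paper: the identical rank computation yielding $\rho(\sfLift M)-\rho(\sfLift^* M) = 1-x_f-x_s$ (an affine function, hence a lineality element), maximality of $P(\sfLift^* M)$ via Lemma~\ref{lem:corank-cell} and connectivity, and identification of the common codimension-one cell with the polytope of $M\oplus U_{1,fs}$, which is exactly your facet $\Phi=P_{\sfLift M}([n])$. The only difference is that you spell out, via the split flacet $[n]$ and the facet-containment argument, why the intersection is precisely this cell, a step the paper states more tersely.
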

\begin{proof} 
  Let $S$ be a subset of $[n]+fs$ of size $d+1$. 
  We have $\rank_M(S-fs)\leq d$.
  From \eqref{eq:rank-parallel-cofree} we deduce that $\rank_{\sfLift^* M}(S) = \rank_M(S-fs)+1 \leq d+1 = \size S$, while Lemma~\ref{lem:sfLift} gives $\rank_{\sfLift M}(S) = \rank_M(S-fs)+\size(fs \cap S) \leq \size(S-fs)+\size(fs \cap S)=d+1$. 
  Combining these two arrive at the equation $\rank_{\sfLift^* M}(S) = \rank_{\sfLift M}(S)-\size(fs\cap S)+1$.
  This implies
  \[
    \rho(\sfLift^* M) + 1 \ = \ \rho(\sfLift M) + x_f + x_s \enspace .
  \]
  As a consequence the corank subdivision of $\sfLift^* M$ coincides with the corank subdivision of $\sfLift M$.
  The common bases of the matroids $\sfLift M$ and $\sfLift^* M$ are the bases of the direct sum $M\oplus U_{1,fg}$.
  The corresponding matroid polytope yields the desired cell of codimension one.
\end{proof}

For each split flacet $F$ of $M$ we let $N_F$ be the connected $(d+1,n+2)$-matroid with elements $[n]+fs$ which has the following list of cyclic flats: $\emptyset$, $[n]-F$ of rank $d-\rank(F)$, $[n]-F+fs$ of rank $d+1-\rank(F)$ and $[n]+fs$ of rank $d+1$.

Note that these sets form a chain.
This chain has a rank $0$ element, the ranks are strictly increasing, and for each set the rank is less than the size.
Hence these sets form the cyclic flats of a matroid.
Its rank function is given by $\rank(S) = \min\SetOf{\rank(G)+\size(S-G)}{G \text{ is a cyclic flat}}$; see \cite{BoninMier:2008}.
Hence, the rank function of $N_F$ satisfies
\begin{equation}\label{eq:N_F}
  \rank_{N_F}(S) = \min \bigl\{ d+1, \size(S), \size(S\cap F)+d+1-\rank_M(F),  \size(S\cap (F+fs) )+d-\rank_M(F)  \bigr\} \enspace .
\end{equation}
This is a nested matroid with exactly two split flacets, namely $[n]-F$ and $[n]-F+fs$.
The corresponding hypersimplex splits are not compatible, i.e., $N_F$ is not a split matroid.
The following result compares the corank in $\sfLift M$ with the corank in $N_F$.
\begin{proposition}\label{prop:nested-ineq}
  For each split flacet $F$ of $M$ and any set $S\subseteq[n]+fs$ with $\size(S)=d+1$ we have
  \begin{equation} \label{eq:nested-cell:main}
     d+1-\rank_{\sfLift M}(S)+\rank_M(F)-\size(S\cap F)\ \geq \ d+1-\rank_{N_F}(S) \enspace .
  \end{equation}
\end{proposition}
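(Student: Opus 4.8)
The plan is to reduce \eqref{eq:nested-cell:main} to a comparison of the closed forms of the two rank functions and then to split according to whether or not $S$ is a basis of $\sfLift M$. Subtracting $d+1$ from both sides, \eqref{eq:nested-cell:main} is equivalent to
\[
  \rank_{\sfLift M}(S) \ \leq \ \rank_{N_F}(S) + \rank_M(F) - \size(S\cap F) \enspace .
\]
Write $r=\rank_M(F)$, $a=\size(S\cap F)$ and $b=\size(fs\cap S)$. Since $F\subseteq[n]$ is disjoint from $fs$ and $\size S=d+1$, formula \eqref{eq:N_F} simplifies to $\rank_{N_F}(S)=\min\{d+1,\,a+d+1-r,\,a+b+d-r\}$, so the right-hand side above equals $\min\{d+1+r-a,\,d+1,\,b+d\}$; in particular it is always at most $d+1$. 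By \eqref{eq:sfLift:rank} the left-hand side equals $\min\{\rank_M(S-fs)+b,\,d+1\}$.

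First I would treat the case where $S$ is a basis of $\sfLift M$, so the left-hand side is $d+1$. By Lemma~\ref{lem:sfLift} a basis contains $f$ or $s$, hence $b\geq 1$, and inspecting the three types of bases listed there one sees that $S-fs$ is always independent in $M$. Since $F$ is a split flacet of the split matroid $M$, Theorem~\ref{thm:split-matroid} gives that $M|F$ is uniform of rank $r$, so the independent set $S\cap F\subseteq F$ has size $a\leq r$. Together with $b\geq 1$ this forces $\min\{d+1+r-a,\,d+1,\,b+d\}=d+1$, and the inequality holds with equality.

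In the remaining case the left-hand side is $\rank_M(S-fs)+b$, and I would bound this against each of the three terms of the minimum on the right. The bounds against $d+1$ and against $b+d$ follow at once from $\rank_M(S-fs)\leq\min\{\size(S-fs),\,d\}=\min\{d+1-b,\,d\}$. For the bound against $d+1+r-a$, write $S-fs=(S\cap F)\sqcup T$ with $T\subseteq[n]-F$ of size $d+1-b-a$; monotonicity of rank gives $\rank_M(S-fs)\leq\rank_M(F\cup T)=r+\rank_{M/F}(T)$, and since $M/F$ is uniform of rank $d-r$ by Theorem~\ref{thm:split-matroid}, this equals $r+\min\{d+1-b-a,\,d-r\}$. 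Adding $a$ yields $\rank_M(S-fs)+a\leq\min\{r+d+1-b,\,a+d\}\leq r+d+1-b$, which is exactly the required estimate. The only real obstacle is organizing the case distinction correctly: the uniformity of $M|F$ is needed precisely to obtain $a\leq r$ when $S$ is a basis, while the uniformity of $M/F$ enters only in the single nontrivial inequality of the generic case; everything else is monotonicity of the rank function and elementary arithmetic.
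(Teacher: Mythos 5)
Your argument is mathematically sound, and after putting both sides in closed $\min$ form it reduces to the same elementary rank bookkeeping as the paper; the main difference is organisational, plus one caveat about hypotheses. The caveat: both of your appeals to Theorem~\ref{thm:split-matroid} presuppose that $M$ is a split matroid, whereas the proposition (and the paper's proof of it) assumes only that $F$ is a split flacet of the connected matroid $M$. This restriction is harmless for the later application (Lemma~\ref{lem:nested-cell} does assume $M$ split), and it is in fact easily removed from your proof: in the basis case, $S\cap F$ is an independent subset of $F$, so $\size(S\cap F)\le\rank_M(F)$ already by the definition of rank, with no uniformity of $M|F$ needed; and in the generic case you only use $\rank_{M/F}(T)\le\size T$, which holds in every matroid, so your estimate becomes $\rank_M(S-fs)\le\rank_M(F)+\size(S-F-fs)$ -- precisely the paper's inequality \eqref{eq:nested-cell:flacet}. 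With these substitutions your computation proves the statement in its stated generality. As for the route: the paper rewrites $d+1-\rank_{N_F}(S)$ as a maximum of three terms and splits into the cases $\size(S\cap fs)\in\{0,1,2\}$, resolving the case $f,s\in S$ by a short contradiction argument that uses the flacet inequality of $F$ in $\sfLift M$ when $S$ is a basis; you instead split into ``$S$ is/is not a basis of $\sfLift M$'' and compare the two $\min$ expressions term by term, which handles that delicate subcase more transparently (and also makes the equality for bases, needed later in Lemma~\ref{lem:nested-cell}, visible at once).
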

\begin{proof}
  Since the size of $S$ equals $d+1$ the equation \eqref{eq:N_F} simplifies to
  \[
  d+1-\rank_{N_F}(S) \ = \ \max\bigl\{0, \rank_M(F)-\size(S\cap F), \rank_M(F)+1-\size(S\cap F)-\size(S\cap fs) \bigr\}
  \]
  if we subtract both sides from $d+1$.
  That expression is the corank of $S$ in the nested matroid $N_F$.
  This corank function gives the $(d+1,n+2)$-tropical Pl\"ucker vector $\rho(N_F)$.
  In the sequel we will make frequent use of the inequality
  \begin{equation}\label{eq:nested-cell:flacet}
     \rank_M(S-fs) \ \leq \ \rank(F)+\size(S-F-fs) \ = \ \size(S-fs)-\size(S\cap F)+\rank_M(F) \enspace ,
  \end{equation}
  which is a consequence of the fact that $F$ is a cyclic flat of $M$.

  To prove \eqref{eq:nested-cell:main} we distinguish three cases. 
  First, if neither $f$ nor $s$ are in $S$ the inequality \eqref{eq:nested-cell:main} is equivalent to
  \begin{equation}\label{eq:nested-cell:ineq}
    d+1-\rank_M(S)+\rank_M(F) \ \geq \ \max\{\size(S\cap F),\, \rank_M(F)+1 \}  \enspace ,
  \end{equation}
  as $\rank_{\sfLift M}(S) = \rank_M(S)< d+1$ by (\ref{eq:sfLift:rank}).
  The inequality (\ref{eq:nested-cell:ineq}) follows from $\rank_M(S)\leq d$ and \eqref{eq:nested-cell:flacet} with $\size(S-fs) = d+1$.
  Second, if $\size(fs\cap S)=1$, again by applying (\ref{eq:sfLift:rank}) the inequality~\eqref{eq:nested-cell:main} is equivalent to
  \[
  d-\rank_M(S-fs)+\rank_M(F) \ \geq \ \max\{\size(S\cap F),\, \rank_M(F) \} \enspace ,
  \]
  which holds due to the same arguments as in the first case with $\size(S-fs) = d$.
  Third, in the remaining case we have $s,f\in S$, which yields $\rank_M(S-fs)\leq \size(S-fs) = d-1$.
  This implies that the inequality~\eqref{eq:nested-cell:main} is equivalent to
  \begin{equation}\label{eq:nested-cell:third}
    d-1-\rank_M(S-fs) +\rank_M(F)-\size(S\cap F) \ \geq \ \max\{0,\, \rank_M(F)-\size(S\cap F) \} \enspace .
  \end{equation}
  If the maximum on the right hand side is attained at $\rank_M(F)-\size(S\cap F)$ that inequality holds trivially.
  We are left with the situation where the maximum on the right is attained solely by zero.
  This means that $\rank_M(F) < \size(S\cap F)$, which yields
  \begin{equation}\label{eq:nested-cell:third_final}
    d-\size(S\cap F)+\rank_M(F) \ \geq \ d-1 \ \geq \ \rank_M(S-fs) \enspace .
  \end{equation}
  If $\rank_M(S-fs)<d-1$ then \eqref{eq:nested-cell:third} is immediate.
  So we may assume that  $\rank_M(S-fs)=d-1$.
  From Lemma~\ref{lem:sfLift} we deduce that $S$ is a basis of $\sfLift M$.
  Since $F$ is also a flacet of $\sfLift M$ we get $\rank_M(F)\geq\size(S\cap F)$.
  However, this contradicts  $\rank_M(F) < \size(S\cap F)$, and we conclude that the case where the maximum to the right of \eqref{eq:nested-cell:third} cannot be attained at zero only.
  This final contradiction completes our proof.
\end{proof}

\begin{lemma}\label{lem:nested-cell}
  Let $M$ be a $(d,n)$-split matroid.
  Then for each split flacet $F$ of $M$ the matroid polytope $P(N_F)$ is a maximal cell of the corank subdivision of $\Delta(d+1,n+2)$ induced by $\sfLift M$.
  Further, the cell $P(N_F)$ shares a split flacet with $P(\sfLift M)$ and another one with $P(\sfLift^* M)$.
\end{lemma}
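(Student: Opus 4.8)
The plan is to produce, for a fixed split flacet $F$ of $M$, an affine function on $\RR^{[n]+fs}$ whose graph supports the lifted hypersimplex $\bigl(\Delta(d+1,n+2),\rho(\sfLift M)\bigr)$ from below and is attained exactly on the vertices of $P(N_F)$. Put
\[
  g_F(x) \ = \ \sum_{i\in F} x_i - \rank_M(F) \enspace ,
\]
so that $g_F(e_S)=\size(S\cap F)-\rank_M(F)$ for every $(d{+}1)$-subset $S$ of $[n]+fs$. Since $\rank_M(S-fs)+\size(fs\cap S)\le\size(S-fs)+\size(fs\cap S)=d+1$, Lemma~\ref{lem:sfLift} gives that the corank $(d+1)-\rank_{\sfLift M}(S)$ of $e_S$ equals $\size(S-fs)-\rank_M(S-fs)$, and submodularity of $\rank_M$ yields $\rank_M(S-fs)\le\rank_M(F)+\size\bigl((S-fs)\setminus F\bigr)$, hence $(d+1)-\rank_{\sfLift M}(S)\ge g_F(e_S)$ for every vertex $e_S$ of $\Delta(d+1,n+2)$; this is also Proposition~\ref{prop:nested-ineq} read together with the nonnegativity of the corank of $N_F$. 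Consequently $g_F$ is a lower support function of the corank subdivision of $\sfLift M$, and the cell it determines is $Q:=\conv\SetOf{e_S}{(d+1)-\rank_{\sfLift M}(S)=g_F(e_S)}$.

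I would then identify $Q$ with $P(N_F)$ by checking that $(d+1)-\rank_{\sfLift M}(S)=g_F(e_S)$ if and only if $S$ is a basis of $N_F$, which by \eqref{eq:N_F} for $\size(S)=d+1$ means $\size(S\cap F)\ge\rank_M(F)$ and $\size\bigl(S\cap(F+fs)\bigr)\ge\rank_M(F)+1$. The forward implication is immediate from Proposition~\ref{prop:nested-ineq}, since equality forces $(d+1)-\rank_{N_F}(S)\le0$. The converse uses the hypothesis that $M$ is a split matroid: by Theorem~\ref{thm:split-matroid} both $M|F=U_{\rank_M(F),F}$ and $M/F=U_{d-\rank_M(F),[n]-F}$ are uniform. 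If $S$ is a basis of $N_F$, then $\size(S\cap F)\ge\rank_M(F)$ makes $S\cap F$ spanning in $M|F$, while
\[
  \size\bigl(S\cap([n]-F)\bigr) \ = \ (d+1)-\size\bigl(S\cap(F+fs)\bigr) \ \le \ d-\rank_M(F)
\]
makes $S\cap([n]-F)$ independent in $M/F$; therefore $\rank_M(S-fs)=\rank_M(F)+\size\bigl((S-fs)\setminus F\bigr)$, and substituting this into the corank formula gives $(d+1)-\rank_{\sfLift M}(S)=\size(S\cap F)-\rank_M(F)=g_F(e_S)$. Thus $Q=P(N_F)$, and since $N_F$ is connected this polytope is full-dimensional by Proposition~\ref{prop:matroid-polytope}(i), hence a maximal cell. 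This converse is the step I expect to be the real obstacle: one must pin down $\rank_M(S-fs)$ exactly, and that is precisely where connectedness of $M$ alone is not enough and the split-matroid hypothesis enters, through the uniformity of $M|F$ and $M/F$.

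For the last assertion I would read off which facets $P(N_F)$ shares with $P(\sfLift M)$ and $P(\sfLift^* M)$. The cell $P(\sfLift M)$ is cut out in the corank subdivision by the affine function $0$, and $P(\sfLift^* M)$ by $x\mapsto 1-x_f-x_s$, the latter because $\rho(\sfLift^* M)+1=\rho(\sfLift M)+x_f+x_s$ by Lemma~\ref{lem:sf_subdivision1}. Hence $P(N_F)$ meets $P(\sfLift M)$ along the hyperplane $\{g_F=0\}=\bigl\{\sum_{i\in F}x_i=\rank_M(F)\bigr\}$, the $F$-hyperplane, which is a split flacet hyperplane of $\sfLift M$ since $F$ is a split flacet of $M$ and hence of $\sfLift M$ by Lemma~\ref{lem:sfLift}; on $\Delta(d+1,n+2)$ it coincides with the $\bigl([n]-F+fs\bigr)$-hyperplane, one of the two split flacet hyperplanes of $N_F$, because $F$ and $[n]-F+fs$ are complementary in the ground set. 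Likewise $P(N_F)$ meets $P(\sfLift^* M)$ along $\{g_F=1-x_f-x_s\}=\bigl\{\sum_{i\in F+fs}x_i=\rank_M(F)+1\bigr\}$, the $(F+fs)$-hyperplane, which equals the $\bigl([n]-F\bigr)$-hyperplane and is a split flacet hyperplane of both $\sfLift^* M=\bigl(\sfLift(M^*)\bigr)^*$ and $N_F$. Each of these common faces has codimension one, being the facet $P_{\sfLift M}(F)$, respectively $P_{\sfLift^* M}(F+fs)$, whose vertices turn out to be bases of $N_F$ by the characterization established above; keeping track of these complementary flats on the non-full-dimensional hypersimplex is the only further bit of care required.
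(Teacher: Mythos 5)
Your proposal is correct and follows essentially the same route as the paper: the global inequality is Proposition~\ref{prop:nested-ineq}, the split hypothesis enters exactly where you say it does (uniformity of $M|F$ and $M/F$ pins down $\rank_M(S-fs)$ for bases of $N_F$, so the corank lifting touches the affine function $g_F$ precisely on the vertices of $P(N_F)$), and the shared facets are identified as $P_{\sfLift M}(F)=P_{N_F}([n]-F+fs)$ and $P_{\sfLift^* M}(F+fs)=P_{N_F}([n]-F)$. Your packaging via the explicit lower support function $g_F$, with both implications of the equality characterization spelled out, is just a cleaner write-up of the paper's argument, not a different method.
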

\begin{proof}
  We want to show that equality holds in \eqref{eq:nested-cell:main} if $S$ is a basis of $N_F$.
  In other words the corank lifting of $N_F$ agrees with the corank lift of $\sfLift M$ on $P(N_F)$, up to an affine transformation.
  Moreover, the bases of $N_F$ are lifted to height zero, while the lifting function is strictly positive on all other bases; see inequality~\eqref{eq:nested-cell:main}.
  This implies that $P(N_F)$ is a maximal cell in the corank subdivision of $\sfLift M$.

  The matroid $M$ is split, hence the contraction $M/F$ on the flacet $F$ is a uniform matroid of rank $d-\rank(F)$.
  Therefore, the rank function satisfies 
  \[
  \rank_M(S+F-fs)-\rank_M(F) \ = \ \min \{ \size(S-F-fs),\, d-\rank_M(F) \} \enspace .
  \]
  With Lemma~\ref{lem:sfLift} we get
  \begin{equation}\label{eq:rank_estimation}
  \begin{aligned}
    \rank_{\sfLift M}(S) \ &\leq \ \rank_{\sfLift M}(S+F) \\
    &= \ \min \{\rank_M(S+F-fs)+\size(S\cap fs),\, d+1\} \\
     &= \ \min \{\size(S-F)+\rank_M(F),\, d+\size(S\cap fs),\, d+1\} \}  \enspace .
  \end{aligned}
  \end{equation}
  The set $[n]-F$ is a flacet of rank $d-\rank_M(F)$ in $N_F$.
  For any basis $S$ of $N_F$ we get
  \[
  \rank_M(F)+1+\size(S-F) \ \leq \ d+1 \ = \ \size(S-F) + \size(S \cap F) \enspace .
  \]
  This implies that $\size(S \cap F) \geq \rank_M(F)+1$.
  Together with the inequality \eqref{eq:rank_estimation} we get 
  \[
  d+1-\rank_{\sfLift M}(S)-\size(S\cap F)+\rank(F) \ \leq \ 0  \enspace .
  \]
  This means that equality holds in \eqref{eq:nested-cell:main} whenever $S$ is a basis of $N_F$.

  As a consequence $P(N_F)$ is a maximal cell of the corank subdivision of $\sfLift M$.
  Clearly $P(N_F)$ intersects $P(\sfLift M)$ in a codimension-$1$-cell that is contained in
  \[
     P_{\sfLift M}(F)  \ = \ P_{N_F}([n]-F+fs) \enspace .
  \]
  By Lemma~\ref{lem:sf_subdivision1} the same kind of argument holds for $\sfLift^*M$.
   That is, $P(N_F)$ intersects $P(\sfLift M)$ in a codimension-$1$-cell that is contained in $P_{\sfLift^* M}(F+fs) = P_{N_F}([n]-F)$.
\end{proof}
From the above we know that, for a split matroid $M$, the matroid polytopes of $\sfLift M$, $\sfLift^* M$ and the nested matroid $N_F$ for each flacet of $M$ form maximal cells of the corank subdivision induced by $\sfLift M$.
The following result describes the corresponding tropical linear space completely.

\goodbreak

\begin{theorem}\label{thm:rays}
  Let $M$ be a connected $(d,n)$-split matroid.
  Then the corank vector $\rho(\sfLift M)$ is a ray in the Dressian $\Dr(d+1,n+2)$.
  Moreover, it can be lifted to an ordinary Pl\"ucker vector over $\KKt$ if and only if $M$ is realizable over $\KK$.
\end{theorem}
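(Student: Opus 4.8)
The plan is to prove the two assertions separately. For the first, I would identify the corank subdivision $\Sigma$ of $\Delta(d+1,n+2)$ induced by $\rho(\sfLift M)$ completely and then show that its secondary cone is one-dimensional, so that $\rho(\sfLift M)$ spans a ray of $\Dr(d+1,n+2)$. By Lemma~\ref{lem:corank-cell}, $\rho(\sfLift M)$ is a tropical Pl\"ucker vector, hence lies in $\Dr(d+1,n+2)$, and since $\sfLift M$ is connected the polytope $P(\sfLift M)$ is a maximal cell of $\Sigma$. Lemmas~\ref{lem:sf_subdivision1} and~\ref{lem:nested-cell} contribute the further maximal cells $P(\sfLift^* M)$ and $P(N_F)$, one for each split flacet $F$ of $M$, together with explicit descriptions of the codimension-one cells that they share: $P(\sfLift M)\cap P(\sfLift^* M) = P(M\oplus U_{1,fs})$, while $P(N_F)$ meets $P(\sfLift M)$ in $P_{\sfLift M}(F)$ and meets $P(\sfLift^* M)$ in $P_{\sfLift^* M}(F+fs)$.

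The first step then is to argue that there are no other maximal cells. Here I would invoke Proposition~\ref{prop:mp-facets}: every facet of each of the connected matroid polytopes $P(\sfLift M)$, $P(\sfLift^* M)$, $P(N_F)$ is either a hypersimplex facet, and hence lies in $\partial\Delta(d+1,n+2)$, or is cut out by a split flacet. Using Lemma~\ref{lem:sfLift} (the split flacets of $\sfLift M$ are those of $M$ together with $[n]$) and the analogous descriptions for $\sfLift^* M$ and $N_F$, one checks that every split flacet of one of these three matroids produces exactly one of the shared walls listed above. Thus the subcomplex formed by $P(\sfLift M)$, $P(\sfLift^* M)$ and the cells $P(N_F)$ has all of its non-shared facets on $\partial\Delta(d+1,n+2)$; being a full-dimensional closed subcomplex of the subdivision $\Sigma$ of the connected polytope $\Delta(d+1,n+2)$, it must be all of $\Sigma$. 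To conclude, take any lifting function $\lambda$ in the closure of the secondary cone of $\Sigma$; it is affine on each maximal cell. After subtracting a global affine function we may assume $\lambda\equiv 0$ on $P(\sfLift M)$; affineness on $P(\sfLift^* M)$ forces $\lambda$ there to be a scalar multiple of the linear form vanishing on $P(M\oplus U_{1,fs})$, and then affineness on each $P(N_F)$ together with continuity along $P_{\sfLift M}(F)$ and $P_{\sfLift^* M}(F+fs)$ pins down $\lambda$ on $P(N_F)$ in terms of that same scalar. Since these cells cover $\Delta(d+1,n+2)$, the function $\lambda$ is a scalar multiple of $\rho(\sfLift M)$ modulo affine functions; hence the secondary cone of $\Sigma$ is one-dimensional and $\rho(\sfLift M)$ is a ray.

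For the realizability statement I would reduce to Theorem~\ref{thm:realizable} applied to the matroid $\sfLift M$ itself, which is a $(d+1,n+2)$-matroid: $\rho(\sfLift M)$ lifts to an ordinary Pl\"ucker vector over $\KKt$ if and only if $\sfLift M$ is realizable over $\KK$. It then suffices to check that $\sfLift M$ is realizable over $\KK$ precisely when $M$ is. One direction is immediate because $M$ is a minor of $\sfLift M$, namely $M = \bigl((\sfLift M)/s\bigr)\setminus f$, and minors of realizable matroids are realizable. For the converse, $\KK$ is algebraically closed, hence infinite, so a realization of $M$ extends by a sufficiently generic extra column to a realization of the free extension $M'$ of $M$ by $f$; a series extension (parallel coextension) of a realizable matroid is again realizable, since a parallel extension amounts to repeating a column and duality preserves realizability over a field. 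Therefore $\sfLift M$ is realizable over $\KK$, which completes the equivalence.

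I expect the crux of the argument to be the first step of the ray part: showing that the list of maximal cells is complete and that the secondary cone of $\Sigma$ is genuinely one-dimensional even though $\Sigma$ has $k+2$ maximal cells when $M$ has $k$ split flacets, i.e.\ that $\Sigma$ is a coarsest matroid subdivision which is not itself a split once $k\geq 1$. This needs the careful bookkeeping with Proposition~\ref{prop:mp-facets} and Lemma~\ref{lem:sfLift} to close up the cell complex, followed by the linear-algebra computation showing that the continuity conditions across the explicitly identified walls leave exactly one degree of freedom. By contrast, the realizability half should be routine once Theorem~\ref{thm:realizable} and the standard stability of realizability under minors, duals and parallel/free extensions are in hand.
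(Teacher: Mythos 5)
Your proposal is correct and, for most of its length, follows the same route as the paper: the maximal cells of the corank subdivision $\Sigma$ are identified as $P(\sfLift M)$, $P(\sfLift^* M)$ and the $P(N_F)$ via Lemmas~\ref{lem:corank-cell}, \ref{lem:sf_subdivision1} and~\ref{lem:nested-cell}; completeness of this list is established by the same wall accounting (your ``full-dimensional closed subcomplex with free boundary on $\partial\Delta(d{+}1,n{+}2)$'' argument is the paper's ``the dual graph of $\Sigma$ is connected'' in different words, and it rests on exactly the same determination of the split flacets of the three matroids via Lemma~\ref{lem:sfLift}, duality, and the description of $N_F$); and the realizability claim is reduced, just as in the paper, to Theorem~\ref{thm:realizable} applied to $\sfLift M$ together with the stability of realizability over the algebraically closed field $\KK$ under free and series extensions and under taking minors. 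The one genuine difference is the final rigidity step. The paper notes that for each split flacet $F$ the cells $P(\sfLift M)$, $P(\sfLift^* M)$ and $P(N_F)$ form a triangle in the tropical linear space and invokes \cite[Proposition~28]{HerrmannJoswigSpeyer:2012} to exclude nontrivial coarsenings, whereas you argue directly that any lifting function in the closed secondary cone of $\Sigma$ is affine on each maximal cell and, after normalizing it to vanish on $P(\sfLift M)$, is pinned down modulo global affine functions by a single parameter: first across the wall $P(M\oplus U_{1,fs})$ into $P(\sfLift^* M)$, then across the two walls of each $P(N_F)$. That computation is sound, since $\sfLift M$, $\sfLift^* M$ and each $N_F$ are connected, so all cells are full-dimensional and the relevant walls are genuine facets with distinct affine hulls; it buys a self-contained, elementary proof that the secondary cone is one-dimensional, at the cost of some explicit linear algebra. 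The paper's citation is shorter and isolates the structural reason (a triangle in the dual complex forces rigidity), which is the same mechanism in disguise: the two walls of $P(N_F)$ meeting the normalized cell and the one-parameter cell are precisely the edges of that triangle.
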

\begin{proof}
  Let $\Sigma$ be the matroid subdivision of $\Delta(d+1,n+2)$ induced by $\rho(\sfLift M)$.
  By Lemma~\ref{lem:corank-cell}, Lemma~\ref{lem:sf_subdivision1} and Lemma~\ref{lem:nested-cell} the matroid polytopes $P(\sfLift M)$, $P(\sfLift^* M)$ and $P(N_F)$, for each flacet of $M$, form maximal cells of $\Sigma$.
  Further, those results show that for each flacet of these three kinds of matroids there are precisely two maximal cells in that list which contain that flacet.
  Since the dual graph of $\Sigma$ is connected this shows that these are all the maximal cells of $\Sigma$.

  Moreover, for each flacet $F$ of $M$, the three maximal cells $P(\sfLift M)$, $P(\sfLift^* M)$ and $P(\sfLift N_F)$ form a triangle in the tropical linear space.
  It follows from \cite[Proposition 28]{HerrmannJoswigSpeyer:2012} that $\Sigma$ does not admit a non-trivial coarsening, i.e., $\rho(\sfLift M)$ is a ray of the secondary fan and thus of the Dressian.

  Finally, by Theorem~\ref{thm:realizable}, the tropical Pl\"ucker vector $\rho(\sfLift M)$ can be lifted to an ordinary Pl\"ucker vector over $\KKt$ if and only if $\sfLift M$ is realizable over $\KK$.
  As $\KK$ is algebraically closed a matroid is realizable over $\KK$ if and only if any free extension or any series extension is realizable.
\end{proof}
Another general construction for producing tropical Pl\"ucker vectors and thus tropical linear spaces arises from point configurations in tropical projective tori.
This has been investigated in \cite{HerrmannJoswigSpeyer:2012}, \cite{Rincon:2013} and \cite{FinkRincon:2015}.
In the latter reference the resulting tropical linear spaces are called \emph{Stiefel tropical linear spaces}.
These two constructions are not mutually exclusive; there are Stiefel type rays which also arise via Theorem~\ref{thm:rays}.
Complete descriptions of the Dressians $\Dr(3,n)$ are known for $n\leq 8$.
All their rays are of Stiefel type or they arise from connected matroids of rank two via Theorem~\ref{thm:rays}.

Via our method non-realizable matroids of rank three lead to interesting phenomena in rank four.
In particular, the following consequence of the above answers \cite[Question 36]{HerrmannJoswigSpeyer:2012}.
\begin{corollary}
  The Dressian $\Dr(d,n)$ contains rays which do not admit a realization in any characteristic for $d=4$ and $n\geq 11$ as well as for $d\geq 5$ and $n\geq 10$.
  There are rays of the Dressian $\Dr(4,9)$ that are not realizable in characteristic $2$ and others that are not realizable in any other characteristic.
\end{corollary}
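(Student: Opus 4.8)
The plan is to reduce the whole statement to Theorem~\ref{thm:rays}. For a connected split $(d{-}1,n{-}2)$-matroid $M$ the corank vector $\rho(\sfLift M)$ is a ray of $\Dr(d,n)$, and it lifts to an ordinary Pl\"ucker vector over $\KKt$ if and only if $M$ is realizable over the algebraically closed field $\KK$; since $\TGr_{\characteristic\KK}(d,n)$ depends only on $\characteristic\KK$, the ray $\rho(\sfLift M)$ is realizable in characteristic $p$ exactly when $M$ is realizable over an algebraically closed field of characteristic $p$. So the corollary reduces to producing connected split matroids $M$ of the correct size whose realizability behaves as prescribed. For the "split" part I would invoke Theorem~\ref{thm:paving_matroids}: every connected paving matroid is split, so in particular every simple connected matroid of rank~$3$ and every connected sparse paving matroid is split.

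For the $\Dr(4,9)$ claims I would take $M$ to be the Fano matroid $F_7$, respectively the non-Fano matroid $F_7^-$; both are simple of rank~$3$ on $7$ elements and connected, hence split, and $(d{-}1,n{-}2)=(3,7)$ in both cases. It is classical that $F_7$ is representable over a field precisely when the characteristic is $2$, whereas $F_7^-$ is representable precisely when the characteristic is not~$2$. Plugging these into Theorem~\ref{thm:rays} yields a ray $\rho(\sfLift F_7)\in\Dr(4,9)$ which is realizable only in characteristic~$2$ (hence in no other characteristic), and a ray $\rho(\sfLift F_7^-)\in\Dr(4,9)$ which is not realizable in characteristic~$2$.

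For the rays that are non-realizable in every characteristic I would, in the case $d=4$, take $M$ to be the non-Pappus matroid, which is simple of rank~$3$ on nine elements, connected, hence split, and not representable over any field because Pappus's theorem holds over every field; for $n>11$ replace it by an iterated free extension, which stays connected and paving and still contains the non-Pappus matroid as a restriction, so it is still representable over no field. This produces the asserted rays in $\Dr(4,n)$ for all $n\geq 11$. For $d\geq 5$ I would instead start from the V\'amos matroid $V_8$ of rank~$4$ on eight elements, which is sparse paving, connected, and representable over no field, and then move it to a connected split $(d{-}1,n{-}2)$-matroid that is still representable over no field by a sequence of free extensions (raising the number of elements) and free coextensions (raising the rank). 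Each such step preserves connectivity, keeps $V_8$ as a minor --- so non-representability over every field is inherited --- and preserves the realizability status over each algebraically closed field, because a matroid is realizable over $\KK$ if and only if a free extension is (the last step in the proof of Theorem~\ref{thm:rays}), and free coextensions are handled by passing to the dual. Combined with the symmetry $\Dr(d,n)\cong\Dr(n-d,n)$, which lets one assume $d\leq n/2$, this covers the stated range $d\geq 5$, $n\geq 10$.

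The one genuinely matroid-theoretic point --- and the main obstacle --- is hidden in the previous paragraph: one must check that the free (co)extensions used to push $V_8$ to the required parameters stay inside the class of split matroids. The clean way is to observe that a free extension of a paving matroid is again paving (a new circuit through the added element has one more element than the rank, and the old circuits are unchanged), and then to dualize, using that split matroids are closed under duality, so that a free coextension --- being the dual of a free extension of the dual --- also stays split; it is easiest to carry the stronger sparse paving property through the whole iteration so that both operations remain available. Once this closure statement is established, the remaining work --- the realizability bookkeeping over $\KK$ and the arithmetic $(d{-}1,n{-}2)\mapsto(d,n)$ --- is routine, and the corollary follows by assembling the three families of input matroids above.
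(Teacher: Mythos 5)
Your proposal is correct and follows essentially the same route as the paper: reduce everything to Theorem~\ref{thm:rays}, feed in the Fano and non-Fano matroids for $\Dr(4,9)$ and the non-Pappus and V\'amos matroids for the infinite families, and reach higher parameters by free extensions and coextensions, which stay connected and split (your paving/sparse-paving argument for this closure is a correct elaboration of what the paper merely asserts). The only extra ingredient you add, the symmetry $\Dr(d,n)\cong\Dr(n-d,n)$, does not actually enlarge the coverage (dualizing can push $d$ below $5$, e.g.\ $(7,10)\mapsto(3,10)$, where the construction produces nothing), but the boundary cases it is meant to handle are glossed over in the paper's own proof in exactly the same way, so your argument is on par with the original.
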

\begin{proof}
  The non-Pappus $(3,9)$-matroid and the Vamos $(4,8)$-matroid are not realizable in any characteristic.
  Both are connected and paving and hence split.
  The construction in Theorem~\ref{thm:rays} leads to non realizable rays in $\Dr(4,11)$ and $\Dr(5,10)$.
  Each free extension or coextension of such a matroid is again connected and split.
  Thus we obtain non realizable rays in all higher Dressians.

  Applying Theorem~\ref{thm:rays} to the Fano and the non-Fano $(3,7)$-matroids we obtain two rays in $\Dr(4,9)$.
  The first one is realizable solely in characteristic $2$, whereas the other one is realizable in all other characteristics.
\end{proof}

\begin{figure}
  \input{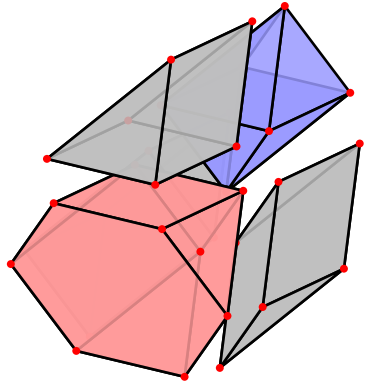}
  \caption{Projection of the corank subdivision of $\Delta(3,8)$ induced by \textcolor{blue}{$\sfLift \snow$} or, equivalently, induced by \textcolor{red}{$\sfLift^* \snow$}  There are \emph{five} maximal cells, one of which is almost entirely hidden in the picture}
  \label{fig:ray}
\end{figure}

\begin{example}
  Once again consider the snowflake matroid $\snow$ from Examples~\ref{example:snowflake-matroid} and \ref{example:snowflake-tree}.
  The corank vector of the series-free lift $\sfLift \snow$ is a ray in $\Dr(3,8)$.
  Since $\snow$ has three split flacets the corank subdivision has $3+2 = 5$ maximal cells.
  This is the, up to symmetry, unique ray of $\Dr(3,8)$ which does not arise from point configuration in the tropical projective $2$-torus; see \cite[Fig.~7]{HerrmannJoswigSpeyer:2012}.
  A projection of this subdivision to three dimensions is shown in Figure~\ref{fig:ray}.
\end{example}

\section{Concluding remarks and open questions}
\noindent
It would be interesting to characterize the split matroids in terms of their minors.
To this end we have the following contribution.
\begin{proposition} 
  The class of split matroids is closed under duality as well as under taking minors.
\end{proposition}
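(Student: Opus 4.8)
The plan is to prove the two closure properties separately, in both cases exploiting the polyhedral description of split matroids recalled right after the definition: $M$ is a split matroid if and only if every face of $P(M)$ of codimension at least two lies in the boundary of $\Delta(d,n)$, equivalently (a polytope contained in a finite union of hyperplanes lies in one of them) in some coordinate hyperplane $\{x_i=0\}$ or $\{x_i=1\}$.

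\emph{Closure under duality.} The affine involution $\tau\colon\RR^n\to\RR^n$, $x\mapsto\1-x$, carries $\Delta(d,n)$ onto $\Delta(n-d,n)$, carries $P(M)$ onto $P(M^*)$, and for every $i$ interchanges the cube facets $\{x_i=0\}$ and $\{x_i=1\}$. Since $\tau$ induces an isomorphism of face lattices $P(M)\to P(M^*)$ preserving codimension and permuting the coordinate hyperplanes, the displayed criterion is invariant under $\tau$, so $M$ is split if and only if $M^*$ is.

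\emph{Closure under minors.} Every minor is an iterated single‑element deletion/contraction, and $M\setminus e=(M^*/e)^*$, so by the duality part it suffices to show that $M/e$ is split whenever $M$ is split. By Proposition~\ref{prop:disconnected} (and because the contraction of a uniform matroid is uniform) this reduces to the case $M$ connected: if $e$ lies in a uniform direct summand the conclusion is immediate, and if $e$ lies in the one connected non‑uniform summand $C$ then $M/e$ is a direct sum of $C/e$ with uniform matroids, so Proposition~\ref{prop:disconnected} finishes once $C/e$ is known to be split. So assume $M$ is connected and split; then $M$ has no loops or coloops, $M/e$ has rank $d-1$ on $n-1$ elements, and $P(M/e)$ is the image of the face $P(M)\cap\{x_e=1\}$ under the coordinate projection $\phi$ forgetting $x_e$. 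By Proposition~\ref{prop:mp-facets}, $P(M)=\Delta(d,n)\cap\bigcap_FH_F^{\leq}$ with $F$ ranging over the split flacets of $M$ and $H_F^{\leq}=\{x:\sum_{i\in F}x_i\leq\rank(F)\}$. Now $\phi$ restricts to an isomorphism $\Delta(d,n)\cap\{x_e=1\}\to\Delta(d-1,n-1)$, sends coordinate hyperplanes to coordinate hyperplanes, and sends $H_F^{\leq}\cap\{x_e=1\}$ to the halfspace $\{\sum_{i\in S_F}x_i\leq\rho_F\}$, where $(S_F,\rho_F)=(F\setminus e,\rank(F)-1)$ if $e\in F$ and $(S_F,\rho_F)=(F,\rank(F))$ otherwise. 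Hence $P(M/e)$ is $\Delta(d-1,n-1)$ intersected with coordinate halfspaces and these halfspaces; from the inequalities $d-\size F<d-\rank(F)<n-\size F$ that hold because $F$ is a split flacet of $M$, Proposition~\ref{prop:splits_of_the_hypersimplex} shows each of the latter to be either redundant for $\Delta(d-1,n-1)$, or a coordinate halfspace, or a genuine split of $\Delta(d-1,n-1)$, so that every facet of $P(M/e)$ other than a coordinate facet --- in particular every split flacet of $M/e$ --- lies on one of these split hyperplanes. Finally these splits are pairwise compatible: for split flacets $F,G$ of $M$, a short case analysis on the incidences of $e$ with $F$ and $G$ turns the inequality $\size(F\cap G)+d\leq\rank(F)+\rank(G)$ of Proposition~\ref{prop:compatible_splits} into exactly the compatibility inequality $\size(S_F\cap S_G)+(d-1)\leq\rho_F+\rho_G$ for the projected pair when $e\in F\cup G$, and into something strictly stronger when $e\notin F\cup G$; alternatively, $\phi$ transports the matroid subdivision of $\Delta(d,n)$ cut out by the $H_F$ to a matroid subdivision of $\Delta(d-1,n-1)$, which certifies compatibility of the projected splits. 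Therefore the split flacets of $M/e$ form a subfamily of a compatible system of splits, hence are compatible, and $M/e$ is a split matroid.

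\emph{Main obstacle.} The step I expect to be the real difficulty --- and the reason for the detour through polytopes rather than a direct argument from Theorem~\ref{thm:split-matroid} --- is that contraction (or deletion) of a single element can destroy connectivity, so the split flacets of $M/e$ need not be closures of flacets of $M$, and the clean dictionary ``split flacet $\leftrightarrow$ uniform restriction and contraction'' is not available for $M/e$ directly. The projection $\phi$ circumvents this because it converts $P(M)\cap\{x_e=1\}$ into $P(M/e)$ and the compatible split hyperplanes of $M$ into compatible split hyperplanes of $\Delta(d-1,n-1)$ at the same time; the only extra bookkeeping is to confirm that the degenerate images --- which occur precisely when $e\in F$ and $\rank(F)=1$, or when $\size F=2$ --- contribute only coordinate constraints, hence only boundary faces.
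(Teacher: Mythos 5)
Your duality argument is essentially the paper's own (the coordinate flip $x\mapsto\1-x$ carrying $P(M)$ to $P(M^*)$ and preserving splits and their compatibility), so nothing to add there. For minors you take a genuinely different route: the paper works with a single-element deletion, shows that the closure in $M$ of a split flacet of the deletion is a split flacet of $M$, and refutes a hypothetical incompatibility by an explicit $\epsilon$-perturbation lifting a relatively interior point of $\Delta(d,[n]-e)$ to one of $\Delta(d,n)$, with a four-way case analysis; you instead contract, project the facet description of Proposition~\ref{prop:mp-facets} onto the face $\{x_e=1\}$, and transfer the compatibility inequality of Proposition~\ref{prop:compatible_splits}. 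That transfer computation is correct, and your argument is complete whenever $M/e$ is connected.

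The gap is the case where $M/e$ is disconnected or has loops, which occurs for perfectly ordinary connected split matroids: contracting the common element of a parallel connection of two copies of $U_{2,3}$ (a paving, hence split, matroid) disconnects it, and contracting an element of the snowflake matroid $\snow$ creates a loop. Your opening reduction via Proposition~\ref{prop:disconnected} removes disconnectedness of $M$, not of $M/e$, and then two steps in your final paragraph are unjustified. First, when $P(M/e)$ is not full-dimensional in $\Delta(d-1,n-1)$, a facet of $P(M/e)$ no longer spans a hyperplane, so from $P(M/e)=\Delta(d-1,n-1)\cap\bigcap_F\{\sum_{i\in S_F}x_i\le\rho_F\}$ you cannot conclude that the $F'$-hyperplane of a split flacet $F'$ of $M/e$ \emph{equals} one of the projected split hyperplanes; it may merely share a low-dimensional facet with a coordinate hyperplane or a projected hyperplane, and the identification ``split flacet of $M/e$ $\Rightarrow$ projected split'' is exactly what needs proof here (cf.\ Remark~\ref{rem:flacets}). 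Second, for a disconnected $M/e$ the definition requires the split flacets to be compatible as splits of the affine hull of $P(M/e)$ intersected with $[0,1]^{[n]-e}$, not of $\Delta(d-1,n-1)$; Example~\ref{example:compatible_splits} shows these two notions genuinely differ, and when $M/e$ has a loop the relative interior of the smaller region lies entirely in the boundary of $\Delta(d-1,n-1)$, where compatibility in the hypersimplex gives no information. Both points can be repaired, e.g.\ by first splitting off loops, applying Proposition~\ref{prop:disconnected} to $M/e$ and arguing component by component, or by arguing flacet-by-flacet through closures in $M$ as the paper does (that step needs no connectivity of the minor); but as written your conclusion silently assumes $M/e$ connected, which is precisely the ``main obstacle'' you flagged and did not fully circumvent.
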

\begin{proof}
  The matroid polytope $P(M^*)$ of the dual $M^*$ of a $(d,n)$-matroid $M$ is the image of $P(M)\subset\RR^n$ under the the coordinate-wise transformation $x_i\mapsto 1-x_i$.
  In particular, $P(M^*)$ is affinely isomorphic with $P(M)$.
  In view of Proposition~\ref{prop:disconnected} we may assume that $M$ is connected.
  In this case any flacet $F$ of $M$ is mapped to the flacet $[n]-F$ of $M^*$.
  The compatibility relation among the splits is preserved under affine transformations.
  It follows that $M^*$ is split if and only if $M$ is.

  Assume that $M$ is a split matroid.
  Next we will show that the deletion $M|([n]-e)$ of an element $e\in [n]$ is again split.
  Since we already know that the class of split matroids is closed under duality it will follow that the class of split matroids is minor closed.

  Let $F$ be a split flacet of $M|([n]-e)$. 
  The $F$-hyperplane separates at least one vertex of $\Delta(d,[n]-e)$ from $P(M|([n]-e))$.
  This implies that the closure of $F$ in $M$ is a split flacet of $M$.
  For that closure there are two possibilities.
  So either $F$ or $F+e$ is a split flacet of $M$.

  Let us suppose that $F$ and $G$ are two split flacets of $M|([n]-e)$ which are incompatible.
  That is, there is some point $x$ in the relative interior of $\Delta(d,[n]-e)$ which lies on the $F$- and $G$-hyperplanes.
  We aim at finding at a contradiction by distinguishing four cases which arise from the two possibilities for the closures of the two flacets $F$ and $G$.

  First, suppose that $F$ and $G+e$ are split flacets of $M$.
  Then there exists some element $h\in G-F$, for otherwise $e$ would be in the closure of $F$ in $M$.
  For each $\epsilon > 0$ we define the vector $\hat x\in \RR^n$ with
  \begin{equation}\label{eq:minor:hatx}
    \hat x_e \, = \, \epsilon \;, \quad \hat x_h \, = \, x_h -\epsilon \quad \text{and} \quad \hat x_i \, = \, x_i \; \text{ for all other elements } i \enspace .
  \end{equation}
  If $\epsilon>0$ is sufficiently small then the vector $\hat x$ is contained in the relative interior of $\Delta(d,n)$.
  By construction $\hat x$ lies on the $F$- and $(G+e)$-hyperplanes, so that the corresponding splits are not compatible.
  This contradicts that $M$ is a split matroid.

  The second case where $F+e$ and $G$ are split flacets of $M$ is symmetric to the previous.

  Thirdly suppose that $F$ and $G$ are split flacets of $M$.
  Assume that $M|([n]-e)$ is connected.
  Then we have $\size(F\cap G)+d > \rank(F)+\rank(G)$ from Proposition~\ref{prop:compatible_splits}, and the same result implies that $F$ and $G$ are incompatible split flacets of $M$.
  Again this is a contradiction to $M$ being split.
  So we assume that $M|([n]-e)$ is disconnected.
  Then there exists an element $h\in [n]-F-G-e$, and we may construct a relatively interior point $\hat x\in \Delta(d,n)$ as in (\ref{eq:minor:hatx}).
  As before this leads to a contradiction to the assumption that $M$ is a split matroid.

  In the fourth and final case $F+e$ and $G+e$ are split flacets of $M$.
  As in the third case the desired contradiction arises from Proposition~\ref{prop:compatible_splits}, provided that $M|([n]-e)$ is connected.
  It remains to consider the situation where $M|([n]-e)$ is disconnected.
  Then we can find elements $f\in F-G$, $g\in G-F$ and $h\in [n]-F-G-e$.
  As a minor variation to (\ref{eq:minor:hatx}) we let
  \[
  \hat x_e \, = \, \epsilon \;, \quad \hat x_f \, = \, x_f -\epsilon \;, \quad  x_g \, = \, x_g -\epsilon \quad \text{and} \quad \hat x_i \, = \, x_i \; \text{ for all other elements } i \enspace .
  \]
  The vector $\hat x$ lies on the $(F+e)$- and $(G+e)$-hyperplanes, as well as in the relative interior of $\Delta(d,n)$.
  This entails that the flacets $F+e$ and $G+e$ are incompatible, and this concludes the proof.
\end{proof}
So it is natural to ask for the following.
\begin{question}
  What are the forbidden minors for the split matroids?
\end{question}
We want to list what we know about this question.
The only disconnected minimal excluded minor is the $(4,8)$-matroid in Example~\ref{example:direct_sum}.
One can show that the rank of a connected excluded minor must be at least $3$.
The class of split matroids is also closed under dualization.
Hence the number of elements is at least $6$.
There are precisely four excluded minors of rank $3$ on $6$ elements, up to symmetry.
One of them is the matroid in Example~\ref{example:non_split_matroid}, and a second one is its dual.
The third example is the nested matroid $\sfLift(\sfLift U_{1,2})$; see Example~\ref{example:non_split_nested}.
Finally, the fourth case has an extra split and is represented by the vectors:
$(1,0,0)$, $(1,0,0)$, $(0,1,0)$, $(1,1,0)$, $(0,0,1)$, $(1,0,1)$.

\smallskip

Here is another class of matroids of recent interest; see, e.g., Fife and Oxley \cite{FifeOxley:2017}.
A \emph{laminar} family $\cL$ of subsets of $[n]$ satisfies for all sets $A,B\in\cL$ either $A\cap B = \emptyset$, $A\subseteq B$ or $B\subseteq A$.
Furthermore, let $c$ be any real valued function on $\cL$, and this is called a \emph{capacity function}.
A set $I$ is an independent set of the \emph{laminar matroid} $L=L([n],\cL,c)$ if $\size(I\cap A) \leq c(A)$ for all $A\in\cL$.
Here the triplet $([n],\cL,c)$ is called a \emph{presentation} of $L$.
By \cite[Theorem 2.7]{FifeOxley:2017} each loop-free laminar matroid has a unique canonical presentation where the laminar family is the set of closures of the circuits, and the capacity function assigns to each set in the laminar family its rank.
The class of split matroids and the class of laminar matroids are not contained in one another:
The Fano matroid is a split matroid, but it is not laminar as it has closed circuits of size three which share exactly one element.
On the other hand the nested matroid from Example~\ref{example:non_split_nested} is not split.
However, each nested matroid is laminar \cite[Proposition 4.4]{FifeOxley:2017}.

\smallskip

It may be of general interest to look at tropical linear spaces where the matroidal cells correspond to matroids from a restricted class.
For instance, Speyer \cite{Speyer:2009} looks at series-parallel matroids, and he conjectures that the tropical linear spaces arising from them maximize the $f$-vector.
Tropical linear spaces all of whose maximal cells come from split matroids are necessarily one-dimensional, i.e., they are trees.
For instance, this is always the case for $d=2$.

\smallskip

Conceptually, it would be desirable to be able to write down all rays of all the Dressians and the tropical Grassmannians.
Due to the intricate nature of matroid combinatorics, however, it seems somehow unlikely that this can ever be done in an explicit way.
The next best thing is to come up with as many ray classes as possible.
In \cite{HerrmannJoswigSpeyer:2012} tropical point configurations are used as data, whereas here we look at split matroids and their corank subdivisions.
A third class of rays comes from the nested matroids.
However, their analysis is beyond the scope of the present paper.
It can be shown that the corank subdivision of a connected matroid $M$ is a ``$k$-split'' in the sense of Herrmann \cite{Herrmann:2011} if and only if $M$ is a nested matroid with $k+1$ cyclic flats.
The proof for this claim will be given elsewhere.

All known rays of the Dressians arise from corank vectors of various matroids.
So the following is another obvious challenge.
\begin{question}
  Is there a ray in any Dressian that does not induce a corank subdivision?
\end{question}

\smallskip

A \emph{polymatroid} is a polytope associated with a submodular function.
This generalizes matroids given by their rank functions.
Since splits are defined for arbitrary polytopes there is an obvious notion of a ``split polymatroid''.
It seems promising to investigate them.

\smallskip

Both polymatroids and tropical Pl\"ucker vectors are closely related to ``integral discrete functions'' which occur in discrete convex analysis; see, e.g., Murota \cite{Murota:2003}.
In that language a tropical Pl\"ucker vector is the same as an ``$M$-concave function'' on the vertices of the underling matroid polytope.
It would be interesting to investigate the notation of splits and realizability in terms of $M$-convexity.
Hirai took a first step in this direction in \cite{Hirai:2006}, where he studies splits of ``polyhedral convex functions''.

\section*{Acknowledgment}
\noindent
We are indebted to Hiroshi Hirai for asking about the relationship between split matroids and laminar matroids.
Further, we thank Amanda Cameron, Jorge Alberto Olarte and Raman Sanyal for various remarks and suggestions.
Finally, we are grateful to two anonymous referees for numerous very detailed comments which helped to improve the exposition.

\appendix

\section{Some Matroid Statistics}\label{app:statistics}
\noindent
Matsumoto, Moriyama, Imai and Bremner classified matroids of small rank with few elements \cite{Matsumoto:2012}.
A summary is given in Table~\ref{tab:matroid-classes} below.
Based on the census of \cite{Matsumoto:2012} we determined the percentages of paving and split matroids.
The results are given in Table~\ref{tab:matroid-special}.
That computation employed \polymake \cite{DMV:polymake}, and the results are accessible via the new database at \href{https://db.polymake.org/}{db.polymake.org}.
In all tables we marked entries with $-$ that have not been computed due to time and memory constraints.

Filtering all $190214$ matroids of rank $4$ on $9$ elements for paving, sparse paving and splits matroids took about $2000 \sec$ with \polymake version 3.1
(AMD Phenom II X6 1090T with 3.6 GHz single-threaded, running openSUSE 42.1).
We expect that the computation for all $(4,10)$-matroids, which is the next open case, would take much more than $600$ CPU days.

\begin{example}
  All matroids of rank $d$ on $d+2$ elements are split matroids.
  Table~\ref{subtab:paving} shows that most of these are not paving.
\end{example}

\begin{table}[th]
  \centering
  \caption{The number of isomorphism classes of all matroids of rank $d$ on $n$ elements, see \cite[Table 1]{Matsumoto:2012}}
  \label{tab:matroid-classes}
  \vspace{-0.3cm}
  \footnotesize
   \begin{tabular}{@{\hspace{0.3cm}}r@{\hspace{0.6cm}}rrrrrrrrr}
    \toprule
      \multicolumn{1}{l}{$d\backslash n$} & $4$ & $5$ & $6$ & $7$ & $8$ & $9$ & $10$ & $11$ & $12$\\
    \midrule
    2  &  7 &  13 &  23 &  37 &  58 &     87 &        128 & 183 & 259\\
    3  &  4 &  13 &  38 & 108 & 325 &   1275 &      10037 & 298491 & 31899134\\
    4  &  1 &   5 &  23 & 108 & 940 & 190214 & 4886380924 & $-$ & $-$\\
    5  &    &   1 &   6 &  37 & 325 & 190214 &        $-$ & $-$ & $-$\\
    6  &    &     &   1 &   7 &  58 &   1275 & 4886380924 & $-$ & $-$\\
    7  &    &     &     &   1 &   8 &     87 &      10037 & $-$ & $-$\\
    8  &    &     &     &     &   1 &      9 &        128 & 298491 & $-$\\
    9  &    &     &     &     &     &      1 &         10 &    183 & 31899134\\
    10 &    &     &     &     &     &        &          1 &     11 & 259\\
    11 &    &     &     &     &     &        &            &      1 & 12\\
    \bottomrule
  \end{tabular}
\end{table}

\begin{table}[th]
\caption{The percentage of paving and split matroids among the isomorphism classes of all matroids of rank $d$ on $n$ elements}
\label{tab:matroid-special}
\begin{subtable}[c]{0.48\textwidth}
  \subcaption{Paving matroids}
  \label{subtab:paving}
  \scriptsize
  \setlength{\tabcolsep}{1.5mm}
\begin{tabular}{@{\hspace{0.3cm}}r@{\hspace{0.2cm}}rrrrrrrrr}
      \toprule
      \multicolumn{1}{l}{$d\backslash n$} & $4$ & $5$ & $6$ & $7$ & $8$ & $9$ & $10$ & $11$ & $12$\\
      \midrule
         2  &  57 &  46 &  43 &  38 &  36 &  33 & 32 & 30 & 29\\
         3  &  50 &  31 &  24 &  21 &  21 &  30 & 52 & 78 & 91\\
         4           & 100 &  40 &  22 &  17 &  34 &  77 & $-$ & $-$ & $-$\\
         5                    &         & 100 &  33 &  14 &  12 &  63 & $-$ & $-$ & $-$\\
         6                    &         &         & 100 &  29 &  10 &  14 & $-$ & $-$ & $-$\\
         7                    &         &         &         & 100 &  25 &   7 &  17 & $-$ & $-$\\
         8                    &         &         &         &         & 100 &  22 &   5 &  19 & $-$\\
         9                    &         &         &         &         &         & 100 &  20 & 4 & 16\\
         10                   &         &         &         &         &         &          &     100 &      18 & 3\\
         11                   &         &         &         &         &         &          &         &      100 & 17\\
      \bottomrule
      \vspace{0.1cm}
  \end{tabular}
\end{subtable}
   \hspace{0.1cm}
\begin{subtable}[c]{0.48\textwidth}
\subcaption{Split matroids}
\scriptsize
\setlength{\tabcolsep}{1.5mm}
 \begin{tabular}{@{\hspace{0.3cm}}r@{\hspace{0.2cm}}rrrrrrrrr}
      \toprule
      \multicolumn{1}{l}{$d\backslash n$} & $4$ & $5$ & $6$ & $7$ & $8$ & $9$ & $10$ & $11$ & $12$\\    
      \midrule
         2  &  100 &  100 &  100 &  100 &  100 &  100 & 100 & 100 & 100\\
         3  &  100 &  100 &  89 &  75 &  60 &  52 & 61 & 80 & 91\\
         4           & 100 &  100 &  100 &  75 &  60 &  82 & $-$ & $-$ & $-$\\
         5                    &         & 100 &  100 &  100 &  60 &  82 & $-$ & $-$ & $-$\\
         6                    &         &         & 100 &  100 &  100 &  52 & $-$ & $-$ & $-$\\
         7                    &         &         &         & 100 &  100 &   100 &  61 & $-$ & $-$\\
         8                    &         &         &         &         & 100 &  100 &   100 &  80 & $-$\\
         9                    &         &         &         &         &         & 100 &  100 & 100 & 91\\
         10                   &         &         &         &         &         &          &     100 &      100 & 100\\
         11                   &         &         &         &         &         &          &         &      100 & 100\\
      \bottomrule
      \vspace{0.1cm}
  \end{tabular}   
\end{subtable}
\end{table}

\goodbreak

\bibliographystyle{alpha}
\bibliography{References}

\end{document}